\newcommand{\Eset}{\mathbb{E}}
\newcommand{\Nset}{\mathbb{N}}
\newcommand{\Rset}{\mathbb{R}}
\newcommand{\Ecal}{{\cal E}}
\newcommand{\Fcal}{{\cal F}}
\newcommand{\Gcal}{{\cal G}}
\newcommand{\Kcal}{{\cal K}}
\newcommand{\Ncal}{{\cal N}}
\newcommand{\Rcal}{{\cal R}}
\newcommand{\Scal}{{\cal S}}
\newcommand{\Ucal}{{\cal U}}
\newcommand{\Vcal}{{\cal V}}
\newcommand{\Abf}{{\bf A}}
\newcommand{\Bbf}{{\bf B}}
\newcommand{\Dbf}{{\bf D}}
\newcommand{\Ibf}{{\bf I}}
\newcommand{\Pbf}{{\bf P}}
\newcommand{\Qbf}{{\bf Q}}
\newcommand{\Ubf}{{\bf U}}
\newcommand{\Wbf}{{\bf W}}
\newcommand{\Ybf}{{\bf Y}}
\newcommand{\btheta}{{\bar{\theta}}}
\newcommand{\1}{{\mathbf{1}}}
\newcommand{\bbar}{{\bar{b}}}
\newtheorem{lemma}{Lemma}
\newtheorem{theorem}{Theorem}
\newtheorem{definition}{Definition}
\newtheorem{assump}{Assumption}
\theoremstyle{remark}
\newtheorem{remark}{Remark}
\title{Finite-Time Performance of Distributed Temporal Difference Learning with Linear Function Approximation}
\author{{Thinh T. Doan, Siva Theja Maguluri, Justin Romberg\thanks{Thinh T. Doan and Siva Theja Maguluri are with the School of Industrial and Systems Engineering, and Justin Romberg is with the School of Electrical and Computer Engineering, Georgia Institute of Technology, GA, 30332, USA. Email:{\tt\small \{thinhdoan,\,siva.theja\}@gatech.edu, jrom@ece.gatech.edu.}}}} 
\date{}
\begin{document}
\maketitle

\begin{abstract}
We study the policy evaluation problem in multi-agent reinforcement learning, modeled by a Markov decision process.
In this problem, the agents operate in a common environment under a fixed control policy, working together to discover the value (global discounted accumulative reward) associated with each environmental state. 

Over a series of time steps, the agents act, get rewarded, update their local estimate of the value function, then communicate with their neighbors.  The local update at each agent can be interpreted as a distributed variant of the popular temporal difference learning methods {\sf TD}$  (\lambda)$. 

Our main contribution is to provide a finite-analysis on the performance of this distributed {\sf TD}$(\lambda)$ algorithm for both constant and time-varying step sizes. The key idea in our analysis is to use the geometric mixing time $\tau$ of the underlying Markov chain, that is, although the ``noise" in our algorithm is Markovian, its dependence is very weak at samples spaced out at every $\tau$.  We provide an explicit upper bound on the convergence rate of the proposed method as a function of the network topology, the discount factor, the constant $\lambda$, and the mixing time $\tau$. 

Our results also provide a mathematical explanation for observations that have appeared previously in the literature about the choice of $\lambda$.  Our upper bound illustrates the trade-off between approximation accuracy and convergence speed implicit in the choice of $\lambda$.  When $\lambda=1$, the solution will correspond to the best possible approximation of the value function, while choosing $\lambda = 0$ leads to faster convergence when the noise in the algorithm has large variance.
\end{abstract}


\section{Introduction}\label{sec:intro}

Reinforcement learning {(\sf RL)} is a general paradigm for learning optimal policies in stochastic control problems based on simulation \cite{Sutton1998_book,Bertsekas1999_book,Szepesvari2010_book}.  Relatively simple {\sf RL} algorithms have been remarkably effective in solving challenging practical problems; notable examples include autonomous driving \cite{Chen2015_DeepDrive}, robotics \cite{Gu2017_DeepRL}, helicopter flight \cite{Abbeel2007_NIPS}, board games \cite{Silver2016_MasteringTG}, and power networks \cite{Kar2013_QDLearning}.  In these applications, an agent tries to find an optimal policy (a mapping from state to action) through interactions with the environment, modeled as a Markov Decision Process {(\sf MDP)}, with the goal of optimizing its long-term future reward (or cost).  One of the central problems in {\sf RL}, and the one considered in this paper, is the {\em policy evaluation problem}, where the expected long-term reward for a given stationary policy is estimated.  While interesting in its own right, policy evaluation also arises as a subproblem in {\sf RL} policy search methods, including policy iteration and actor-critic methods \cite{Sutton1998_book,Bertsekas1999_book}.   
Temporal-difference learning ({\sf TD}$(\lambda)$), originally proposed by Sutton \cite{Sutton1988_TD}, is one of the most efficient and practical methods for policy evaluation.  The iterations for {\sf TD}$(\lambda)$, discussed in detail in Section~\ref{subsec:DistributedTD} below, are relatively simple, and can be implemented in an online fashion.  The long-term future cost is estimated as a function of the current state, with the scalar parameter $\lambda\in[0,1]$ controlling the trade-off between the accuracy of the approximation and the susceptibility to simulation noise.  The convergence of {\sf TD}$(\lambda)$ is a classical result, and has been analyzed under various sets of assumptions in \cite{Dayan1992_TD,Gurvits1994_TD,Pineda1997_TD,Tsitsiklis1997_TD, Tsitsiklis1999_TDAverage}.  In practice, the domain of the policy value function (the state space) can be very large.  To avoid (or at least mitigate) the ``curse of dimensionality'', the value function is chosen from a parametric set \cite{Tesauro1995_TD0,Mnih2015_Nature,Silver2016_MasteringTG}.  The particular case we analyze here is linear function approximation, where the value function at each state is a linear combination of (fixed) feature vectors, which allows us to explicitly characterize its rate of convergence.

In this paper, we study the policy evaluation problem where multiple agents explore a common environment.  We are motivated by broad applications of the multi-agent paradigm within engineering, for example, mobile sensor networks \cite{CortesMKB2004,Ogren2004_TAC}, cell networks \cite{Bennis2013_CellNetworks}, and power networks \cite{Kar2013_QDLearning}.  In this multi-agent reinforcement learning ({\sf MARL}) setting, each agent takes its own action based on the current (global) state, and the agents receive different local rewards that are a function of their current state, their new state, and their action.  We assume that each agent observes only its own reward.  Their goal is to cooperatively evaluate the global accumulative reward while only communicating with a small subset of the other agents.  We introduce a distributed variant of {\sf TD}$(\lambda)$ algorithm with linear function approximation, and consider data samples generated by the Markov process representing the environment. Thus, the noise in our algorithm is dependent since it is Markovian.  Our results provide a finite-time analysis of distributed {\sf MARL} {\sf TD}$(\lambda)$, giving an upper bound on the distance of the current estimate from the eventual point of convergence as a function of number of iterations (and other problem parameters).  The results in this paper generalize our preceding work \cite{DoanMR2019_DTD(0)}, where we studied distributed {\sf TD}$(0)$ under i.i.d sampling assumption (i.e, the noise in the algorithm is i.i.d). 

\subsection{Existing literature}
In general, {\sf TD}$(\lambda)$ with linear function approximation can be viewed as a variant of the celebrated stochastic approximation ({\sf SA}) method, whose asymptotic convergence is typically analyzed using the so-called ordinary differential equation ({\sf ODE}) method \cite{borkar2008}. The {\sf ODE} method shows that under the right conditions, the effects of noise eventually average out and the {\sf SA} iterates asymptotically follow the trajectory of a stable {\sf ODE}. In particular, Tsitsiklis and Van Roy considered a policy evaluation problem on a discounted {\sf MDP} for both finite and infinite state spaces with linear function approximation \cite{Tsitsiklis1997_TD}.  By viewing {\sf TD} as a stochastic approximation for solving a suitable Bellman equation, they characterized its almost-sure convergence using the {\sf ODE} approach. Following this work, Borkar and Meyn provided a general and unified framework for the convergence of {\sf SA} with broad applications in {\sf RL} \cite{Borkar2000_ODE}.  General results in this area can be found in the monograph by Borkar \cite{borkar2008}.

While the {\sf ODE} method can be used to study the asymptotic convergence of {\sf TD}$(\lambda)$, it is not obvious how to derive a rate of convergence using this approach. In general, {\sf TD}$(\lambda)$ does not correspond to stochastic gradient descent ({\sf SGD}) on any static optimization problem,  making it challenging to characterize the consistency and quantify the progress of this method. 
Indeed, the convergence rates of TD$(\lambda)$ remained largely open until recently \cite{Thoppe2018_ConcentrationBound,Dalal2018_FiniteTD0,Bhandari2018_FiniteTD,LakshminarayananS2018,LeeH2019,SrikantY2019_FiniteTD,HuS2019,ChenZDMC2019}. In particular, a concentration bound was given in \cite{Thoppe2018_ConcentrationBound} for the {\sf SA} algorithm under a strict stability assumption of the iterates. A finite-time analysis of the {\sf TD} method with linear function approximation was simultaneously studied in \cite{Dalal2018_FiniteTD0, Bhandari2018_FiniteTD,LakshminarayananS2018,LeeH2019} for a single agent. These works carefully characterize the progress of the {\sf TD} update and derive its convergence rate using standard techniques from the study of {\sf SGD} and the results in \cite{Tsitsiklis1997_TD}. Recently, the  work in \cite{SrikantY2019_FiniteTD} provides a finite-time error bound of linear {\sf SA} under Markovian noise and constant step sizes under very general conditions (e.g., without requiring an additional projection step to keep the iterates bounded). Motivated by this work, the authors in \cite{HuS2019} study the finite-time performance of the linear {\sf SA} using control theoretic approach, while the work in \cite{ChenZDMC2019} provides the finite-time convergence of nonlinear {\sf SA}.  

Within the context of {\sf MARL}, an asymptotic convergence of a distributed gossiping version of {\sf TD}$(0)$ with linear function approximation was probably first studied in \cite{Mathkar2017_GossipRL}, where again convergence is established using the {\sf ODE} approach.  Similar results were also studied implicitly in \cite{zhang2018_ICML}, which analyzes the convergence of distributed actor-critic methods.  While finite-time (convergence rate) analysis exists for single agent problems \cite{Dalal2018_FiniteTD0,Bhandari2018_FiniteTD,LakshminarayananS2018,SrikantY2019_FiniteTD,HuS2019}, a rate of convergence of distributed {\sf TD}$(\lambda)$  does not exists in the current literature, and is the focus of this paper. While the convergence rates of distributed {\sf TD}$(0)$ were studied in our earlier work \cite{DoanMR2019_DTD(0)}, the results were derived under (perhaps unrealistically strong) assumptions on the noise being independent, and the algorithm included a projection step at each iterate that required a priori knowledge about the solution. Below, we study the rate of convergence of {\sf TD}$(\lambda)$ under Markovian noise and without requiring any projection step. Our approach is motivated by the recent work in \cite{SrikantY2019_FiniteTD} about the rates of {\sf TD}$(\lambda)$ for a single agent problem.           

Finally, we mention some related {\sf RL} methods for solving policy evaluation problems in both single agent {\sf RL} and {\sf MARL}, such as, the gradient temporal difference methods studied in \cite{Sutton2008_GTD,Sutton2009_FGTD,Liu2015_FiniteGTD,Macua_2015_TAC,Stankovic2016_ACC,Wai2018_NIPS}, least squares temporal difference ({\sf LSTD}) \cite{Bradtke1996_LSTD,Tu2018_ICML}, and least squares policy evaluation ({\sf LSPE}) \cite{Nedic2003_LSPE,Yu2010_ICML,Yu2009_TAC}. Although they share some similarity with {\sf TD} learning, these methods belong to a different class of algorithms whose update iterations are more complex. Our analysis focuses on a decentralized extension of {\sf TD} online learning that is practical and simple to implement (although, as we will see below, not as straightforward to analyze). 

\subsection{Main contributions}
In this paper, we study a distributed variant of the temporal difference learning method for solving the policy evaluation problem in multi-agent reinforcement learning. Our distributed algorithm is composed of a consensus step amongst locally communicating agents, followed by a local {\sf TD}$(\lambda)$ update. Our main contribution is to provide a finite-time analysis on the performance of this distributed {\sf TD} algorithm for both constant and time-varying step sizes. The key idea in our analysis is to properly utilize the geometric mixing time $\tau$ of the underlying Markov chain, which allows a systematic treatment of the Markovian ``noise'', as its dependence becomes quantifiably weak for observations $\tau$ steps apart.
%

We provide an explicit upper bound on the rate of convergence of the proposed method as a function of the network topology, discount factor, the constant $\lambda$, and the mixing time $\tau$ of the underlying Markov chain. Our results theoretically address some numerical observations of {\sf TD}$(\lambda)$ , that is, $\lambda=1$ gives the best approximation of the function values while $\lambda = 0$ leads to better performance when there is large variance in the algorithm. 



\section{Multi-agent reinforcement learning}
We consider a multi-agent reinforcement learning system of $N$ agents operating in a common environment, modeled by a Markov decision process. We assume that the agents can communicate with each other through a connected and undirected graph $\Gcal = (\Vcal,\Ecal)$, where $\Vcal = \{1,\ldots,N\}$ and $\Ecal = \Vcal\times\Vcal$ are the vertex and edge sets, respectively. At each time step, the agents observe the state of the environment, take an action based on this observation, and receive a corresponding award.
The goal of the agents is to cooperatively estimate the global discounted accumulative reward, which is composed of local rewards received by the agents. This is the policy evaluation problem for multi-agent systems, which can be mathematically characterized as follows. 

Let $\Scal$ be the global finite state space of the environment, where $S$ denotes its cardinality. In addition, we denote by $\Ucal^{v}$ the set of control actions at each agent $v$. At each time step $k\geq 0$, each agent $v\in\Vcal$ observes the state $s_{k}\in\Scal$ of the environment and based on that observation take an action $u_{k}^{v} = \mu^{v}(s_k) \in \Ucal^{v}$. Here $\mu^{v}:\Scal\rightarrow\Ucal^{v}$, the policy of agent $v$, is a function mapping the state of the environment to a control action in $\Ucal^{v}$. We denote by $\Ucal = \Ucal^{1}\times\ldots\times \Ucal^{N}$ and let $u = u^{1}\times\ldots\times u^{N}\in \Ucal$ be the joint action of the agents. As a consequence of their joint actions, the environment moves to a new state $s_{k+1}\in\Scal$. In addition, each agent $v$ receives an instantaneous local reward $\Rcal^v_{k} = \Rcal^{v}(s_{k},u_{k},s_{k+1})$. The goal of the agents is to cooperatively find the average of total discounted accumulative reward $J$ over the network defined as 
\begin{align}
J(i) \triangleq \Eset \left[\sum_{k=0}^{\infty}\frac{\gamma^{k}}{N}\sum_{v\in\Vcal}\Rcal^{v}(s_{k},u_{k},s_{k+1})\Bigm| s_{0} = i\right],  \label{distributed:ValueFunc}
\end{align}
where $\gamma$ is the discount factor. 

The system evolves under a fixed, stationary policy at each agent, and so the dynamics of the environment can be modeled by a Markov chain. We use $\Pbf$ for the transition probabilities for this underlying Markov chain: $\Pbf(s_{k} = i,s_{k+1} = j) = p_{ij}$ for $i,j\in\Scal$ is the probability of the environment transitioning to state $j$ from state $i$. (These transition probabilities also of course depend on the policy, but since the policy remains fixed, we do not include it in the notation.)  It is well-known that $J$ satisfies the Bellman equation \cite{Bertsekas1999_book,Sutton1998_book},
\begin{align}
J(i) = \sum_{j=1}^n p_{ij} \left\{\frac{1}{N}\sum_{v\in\Vcal}\Rcal^{v}(i,j)+\gamma J(j) \right\},\qquad i\in\Scal.\label{distributed:Bellman}
\end{align} 
We are interested in the case when the number of states is very large, and so computing $J$ exactly may be intractable.  To mitigate this, we use low-dimensional approximation $\tilde{J}$ of $J$, restricting $\tilde{J}$ to be in a linear subspace.  
While more advanced nonlinear approximations using, for example, neural nets as in the recent works \cite{Mnih2015_Nature, Silver2016_MasteringTG} may lead to more powerful approximations, the simplicity of the linear model allows us to analyze it in detail.  The linear function approximation $\tilde{J}$ is parameterized by a weight vector $\theta\in\Rset^{L}$, with\vspace{-0.2cm}
\begin{align}
\tilde{J}(i,\theta) = \sum_{\ell=1}^{L}\theta_\ell\phi_{\ell}(i),\label{centralized:LinearAprox}
\end{align}  
for a given set of $L$ basis vectors $\phi_{\ell}:\Scal\rightarrow\Rset$, $\ell\in\{1,\ldots,L\}$. 
We are interested in the case  $L\ll S$. Let $\phi(i)$ be the feature vector defined as\footnote{We refer to $\phi_{\ell}\in\Rset^{S}$ as the basis vectors and $\phi(i)\in\Rset^L$ as the feature vectors since it is defined on the environmental states.}
\begin{align*}
\phi(i) = (\phi_1(i),\ldots,\phi_{L}(i))^{T}\in\Rset^{L}.
\end{align*}
And let $\Phi\in\Rset^{S\times L}$ be a matrix, whose $i$-th row is the row vector $\phi(i)^{T}$ and whose $\ell$-th column is the vector $\phi_\ell = (\phi_\ell(1),\ldots,\phi_\ell(S))^{T}\in\Rset^{S}$, that is 
\begin{align*}
\Phi = \left[\begin{array}{ccc}
\vert     &  & \vert\\
\phi_1     & \cdots & \phi_L\\
\vert & & \vert
\end{array}\right] = \left[\begin{array}{ccc}
\mbox{---}   & \phi(1)^{T} & \mbox{---}\\
\cdots     & \cdots & \cdots\\
\mbox{---}   & \phi(S)^{T} & \mbox{---}
\end{array}\right]\in \Rset^{S\times L}.
\end{align*}
Thus, $\tilde{J}(\theta) =  \Phi \theta$.  The gradient of $\tilde{J}(i,\theta)$ and the Jacobian of $\tilde{J}(\theta)$ with respect to $\theta$ are
\begin{align*}
\nabla\tilde{J}(i,\theta) = \phi(i),
\quad\text{and}\quad
\nabla\tilde{J} = \Phi^{T}.
\end{align*}
The goal now is to find a $\tilde{J}$ that is the best approximation of $J$  based on the generated data by applying the stationary policy $\mu = (\mu^{1},\ldots,\mu^{N})$ on the {\sf MDP}. That is, we seek a weight $\theta^*$ such that the distance between $\tilde{J}$ and $J$ is minimized. In our setting, since each agent knows only its own reward function, the agents have to cooperate to find $\theta^*$. 
In the next section, we solve this problem using a distributed variant of the {\sf TD}$(\lambda)$ algorithm \cite{Sutton1988_TD}, where the agents only share their estimates of the weight $\theta^*$ to their neighbors but not their local rewards.

\subsection{Distributed TD$(\lambda)$ methods}\label{subsec:DistributedTD}

We introduce a consensus-based variant of the centralized {\sf TD}$(\lambda)$ method to find the weight $\theta^*$; the method is stated formally in Algorithm \ref{alg:DTD}. Our algorithm is a distributed variant of stochastic approximation for solving a suitably reformulated Bellman equation for \eqref{distributed:ValueFunc} \cite{Sutton1998_book,Bertsekas1999_book}. Algorithm \ref{alg:DTD} can be explained as follows.

Each agent $v$ maintains its own estimate $\theta_v\in\Rset^{L}$ of the weight $\theta^*$. At every iteration $k\geq0$, each agent $v$ first performs a weighted average to combine its own estimate and the ones received from its neighbors $u\in\Ncal_{u} := \{ u \in \Vcal\; |\; (v,u) \in\Ecal\}$,
\begin{align*}
y_{k}^{v} &= \sum_{u\in\Ncal_{v}}W_{vu}\theta_{k}^u.
\end{align*}
Agent $v$ then computes the so-called local temporal difference $d_k^{v}$ based on the 
observed data $(s_{k},s_{k+1},r_{k}^v)$ returned by the environment, 
\begin{align*}
d_{k}^{v} &= r_{k}^{v} + \left(\gamma\phi(s_{k+1}) - \phi(s_{k})\right)^{T}\theta_{k}^{v}.
\end{align*}
Here, $d_{k}^{v}$ represents the difference between the local outcome at agent $v$, $r_{k}^{v} + \gamma\tilde{J}(s_{k+1},\theta_{k}^{v})$ and the current estimate $\tilde{J}(s_{k},\theta_{k}^v)$. Using $y_{k}^{v}$ and $d_{k}^{v}$ agent $v$ then updates its estimate $\theta^{v}$ as
\begin{align}
\theta_{k+1}^{v} &= y_{k}^{v} + \alpha_{k}d_{k}^{v}z_{k}^{v},    \label{centralized:TD_ld}
\end{align}
where $\alpha_{k}$ is a non-negative step size and $z^{v}_{k}$ is the so-called eligibility (or trace) vector.  The vector $z^{v}_{k}$ is a weighted combination of all observed feature vectors up to time $k$,
\begin{align}
    \label{eq:zeligibility}
    z_{k}^{v} = \sum_{u=0}^{k}(\gamma\lambda)^{k-u}\phi(s_u).    
\end{align}
In addition, the local temporal differences provide the agents an indicator for increasing or decreasing their current state values (by adjusting $\theta^{v}$) after each transition. Finally, each node $v$ returns a time-weighted average $\hat{\theta}_{k}^{v}$  
\begin{align*}
\hat{\theta}_{k}^{v} = \frac{\sum_{t=0}^{k}\alpha_{t}\theta_{k}^{v}}{\sum_{t=0}^{k}\alpha_{t}}\cdot
\end{align*}
\begin{algorithm}
\caption{Distributed {\sf TD}$(\lambda)$ Algorithm}
\begin{enumerate}[leftmargin = 4mm]
\item \textbf{Initialize}: Each agent $v$ arbitrarily initializes $\theta_{0}^{v}\in\Rset^{L}$, $z_{-1}^{v} = 0$, and the sequence of stepsizes $\{\alpha_{k}\}_{k\in\Nset}$. Set $\hat{\theta}_{0}^{v} = \theta_{0}^v$ and $S_{0}^{v} = 0$.
\item \textbf{Iteration}: For $k=0,1,\ldots,$ agent $v\in\Vcal$ implements
\begin{itemize}[leftmargin = 4mm]
    \item[a.] Exchange $\theta_{k}^{v}$ with agent $u\in\Ncal_{v}$
    \item[b.] Observe a tuple $(s_{k},s_{k+1},r_{k}^v)$
    \item[c.] Execute local updates 
        \begin{align}
        \begin{aligned}
        y_{k}^{v} &= \sum_{u\in\Ncal_{v}}W_{vu}\theta_{k}^u\\ 
        d_{k}^{v} &= r_{k}^{v} + \left(\gamma\phi(s_{k+1}) - \phi(s_{k})\right)^{T}\theta_{k}^{v}\\
        \theta_{k+1}^{v} &= y_{k}^{v} + \alpha_{k}d_{k}^{v}z_{k}^{v}\\
        z_{k+1}^{v} &= \gamma\lambda z_{k}^{v} + \phi(s_{k+1})
        \end{aligned}
        \label{alg:theta_v}
        \end{align}
    \item[d.] Update the output
    \begin{align*}
     S_{k+1}^{v} &= S_{k}^{v} + \alpha_{k+1}\notag\\
    \hat{\theta}_{k+1}^{v} &= \frac{S_{k}^{v}\hat{\theta}_{k}^{v} + \alpha_{k+1}\theta_{k+1}^{v}}{S_{k+1}^{v}}
    \end{align*}
\end{itemize}
\end{enumerate}\label{alg:DTD}
\end{algorithm}


The updates in Eq.\ \eqref{alg:theta_v} have a simple interpretation: agent $v$ first computes $y_v$ by forming a weighted average of its own value $\theta_v$ and the values $\theta_u$ received from its neighbor $u\in\Ncal_v$, with the goal of seeking consensus on their estimates. Agent $v$ then moves along its own temporal direction $d_{k}^{v}z_{k}^{v}$ to update its estimate, pushing the consensus point toward $\theta^*$. In Eq.\ \eqref{alg:theta_v} each agent $v$ only shares $\theta^v$ with its neighbors but not its immediate reward $r^v$. In a sense, the agents implement in parallel $N$ local {\sf TD}$(\lambda)$ methods and then combine their estimates through consensus steps to find the global approximate reward $\tilde{J}$.


\subsection{Characterization of $\theta^*$}
The convergence of Algorithm \ref{alg:DTD} to a point $\theta^*$ for the case of single agent was established for all $0\leq\lambda\leq 1$ in \cite{Tsitsiklis1997_TD}.  We review below some of the properties of $\theta^*$.
Let $\pi = (\pi(1),\ldots,\pi(S))$ be the stationary distribution associated with the transition matrix 
$\Pbf$, which we assume exists and is unique, and $\Dbf\in\Rset^{S\times S}$ be the diagonal matrix whose diagonal entries are $\pi(i)$ for $i\in\Scal$.  Let $\Pi J$ be the projection of a vector $J$ to the linear subspace spanned by the basis vectors $\phi_\ell$,
\begin{align*}
\Pi J = \underset{y\in\text{span}\{\phi_\ell\}}{\arg\min}\|y - J\|_{\Dbf},    
\end{align*}
where $\|J\|_{\Dbf}^2 = J^{T}\Dbf J$ is the weighted norm of $J$ associated with $\Dbf$. Moreover, let $\Ubf\in\Rset^{S\times S}$ and $b^{v}\in\Rset^{L}$ for all $v\in\Vcal$ be defined as 
\begin{align*}
\Ubf &= (1-\lambda)\sum_{k=0}^{\infty}\lambda^{k}(\gamma\Pbf)^{k+1}\\
b^{v} &= \Phi^{T}\Dbf\sum_{k=0}^{\infty}(\gamma\lambda\Pbf)^{k}r^{v},\qquad \forall v\in\Vcal, 
\end{align*}
where $r^{v}\in\Rset^{S}$ is a vector whose $i$-th component is $r^v(i) = \sum_{j=1}^np_{ij}\Rcal^v(i,j)$ for all $v\in\Vcal$. Then, by \cite{Tsitsiklis1997_TD} we have that the  weight $\theta^*$ satisfies 
\begin{align}
\Abf\theta^* = b \triangleq \frac{1}{N} \sum_{v\in\Vcal} b^{v},\label{opt_cond}
\end{align}
where $\Abf$ is a negative definite matrix , i.e., $x^{T}\Abf x < 0$ $\forall x\in\Rset^{L}$, satisfying
\begin{align}
    \Abf = \Phi^{T}\Dbf(\Ubf-\Ibf)\Phi\in\Rset^{L\times L}.  \label{notation:A}
\end{align}
In addition, $\theta^*$ also satisfies  
\begin{align}
\|\Pi J - J\|_{D} \leq \|\Phi \theta^* - J\|_{D} \leq \frac{1-\gamma\lambda}{1-\gamma}\|\Pi\, J - J\|_D.\label{theta*:opt_bound}   
\end{align}
As can be seen from \eqref{theta*:opt_bound}, when $\lambda=1$ we obtain the best approximation value, i.e., $\Phi\theta^* = \Pi J$. However, in the next section we show that although $\lambda < 1$ gives a suboptimal solution it converges faster when the noise due to sampling in our algorithm has large variance.  

\section{Finite-time performance of distributed {\sf TD}$(\lambda)$}\label{sec:results}
In this section, our goal is to provide a finite-time analysis for the convergence of the distributed {\sf TD}$(\lambda)$ presented in Algorithm \ref{alg:DTD}. Motivated by the recent work \cite{SrikantY2019_FiniteTD} for the single agent problem, we provide an explicit formula for the upper bound on the rates of distributed {\sf TD}$(\lambda)$ for both constant and time-varying step sizes. The key idea in our analysis is to use the geometric mixing time $\tau$ of the underlying Markov chain; although the ``noise" our algorithm encounters is Markovian, its dependence is very weak for samples at least $\tau$ steps apart. 
A careful characterization of the coupling between these iterates allows us to derive an explicit formula for the convergence rate of distributed {\sf TD}$(\lambda)$.

\subsection{Notation and Assumptions}
Let $X_{k} = (s_{k},s_{k+1},z_{k})$ be the Markov chain defined by Algorithm~\ref{alg:DTD}, where the discrete-valued $s_k\in\Scal$ have transition probabilities $\Pbf$ and the continuous-valued $z_{k}^{v}$ obeys \eqref{eq:zeligibility}.  While the $z_{k}^{v}$ are computed locally at each agent, they only depend on the global state sequence, and so will be identical for all $v$; below we will use $z_k := z_k^v$.
Let $\Abf(X_{k})\in\Rset^{L\times L}$ and $b^{v}(X_{k})\in\Rset^{L}$ be defined as
\begin{align}
    \begin{aligned}
        \Abf(X_{k}) &= z_k(\gamma\phi(s_{k+1}) - \phi(s_k))^T,\\
        b^{v}(X_{k}) &= r_{k}^{v}z_{k}.
    \end{aligned}
    \label{notation:Akbk}  
\end{align}
Then the update of $\theta_{k}^{v}$ in \eqref{alg:theta_v} can be rewritten as
\begin{align}
    \theta_{k+1}^{v} = \sum_{u\in\Ncal_{v}}W_{vu}\theta_{k}^{u} + \alpha_{k}(\Abf(X_{k})\theta_{k}^{v} + b^{v}(X_{k}))  
    \label{sec_analysis:theta_v}
\end{align}
Let $\btheta$ be the average of $\theta^v$, $\btheta = (1/N)\sum_{v}\theta^{v}$.  Since $\Wbf$ is doubly stochastic, taking the average of Eq.\ \eqref{sec_analysis:theta_v} yields 
\begin{align}
    \btheta_{k+1} =  \btheta_{k} + \alpha_{k}\big(\Abf(X_{k})\btheta_{k} + \bbar(X_{k})\big)\label{sec_analysis:theta_bar},
\end{align}
where $\bbar(X_{k})$ is 
\begin{align}
    \bbar(X_{k}) = \frac{1}{N}\sum_{v\in\Vcal}b^{v}(X_{k}) = \frac{1}{N} \sum_{v\in\Vcal}r_{k}^{v}z_{k}\in\Rset^{L}.\label{sec_analysis:bbark}    
\end{align}
 For convenience, let $\Theta$ and $\Bbf(X_{k})$ be two matrices defined as
\begin{align}
\Theta \triangleq \left[\begin{array}{ccc}
\mbox{---}   & \left[\theta^{1}\right]^{T} & \mbox{---}\\
\cdots     & \cdots & \cdots\\
\mbox{---}   & \left[\theta^{N}\right]^{T} & \mbox{---}
\end{array}\right]\in\Rset^{N\times L},\quad\Bbf(X_{k}) \triangleq \left[\begin{array}{ccc}
\mbox{---}   & \left[b^{1}(X_{k})\right]^{T} & \mbox{---}\\
\cdots     & \cdots & \cdots\\
\mbox{---}   & \left[b^{N}(X_{k})\right]^{T} & \mbox{---}
\end{array}\right]\in\Rset^{N\times L},\label{notation:Theta_B}
\end{align}
Thus, the matrix form of Eq.\ \eqref{sec_analysis:theta_v} can be given as
\begin{align}
\Theta_{k+1} = \Wbf\Theta_{k} + \alpha_{k}\Theta_{k}\Abf(X_{k})^T + \alpha_{k}\Bbf(X_{k}).\label{sec_proofs:Theta}
\end{align}
Next, we make the following  assumptions that are fairly standard in the existing literature of consensus and reinforcement learning  \cite{Tsitsiklis1997_TD,SrikantY2019_FiniteTD,DoanMR2018b}. 
\begin{assump}\label{assump:doub_stoch}
The matrix $\Wbf$, whose $(i,j)$-th entries are $w_{uv}$, is doubly stochastic, i.e., $\sum_{u=1}^n w_{uv} =1$ for all $j$ and $\sum_{v=1}^n w_{uv} = 1$ for all $i$. Finally, $w_{uu} >0$ and the weights $w_{uv} > 0$ if and only if $(u, v) \in \Ecal$ otherwise $w_{uv} = 0$. 
\end{assump}
\begin{assump}\label{assump:reward}
All the local rewards are uniformly bounded, i.e., there exist a constant $R$, for all $v\in\Vcal$ such that $|\,\Rcal^{v}(i,j)\,|\leq R$, for all $i,j\in\Scal$. 
\end{assump}
\begin{assump}\label{assump:features}
The feature vectors $\{\phi_{\ell}\}$, for all $\ell\in\{1,\ldots,L\}$, are linearly independent, i.e., the matrix $\Phi$ has full column rank. In addition, we assume that all feature vectors $\phi(s)$ are uniformly bounded, i.e., $\|\phi(s)\|\leq 1$.  
\end{assump}
\begin{assump}\label{assump:Markov}
The Markov chain associated with $\Pbf$ is irreducible and aperiodic. 
\end{assump}
Assumption \ref{assump:doub_stoch} implies that $\Wbf$ has a largest singular value of $1$, and its other singular values are strictly less than $1$; see for example, the Perron-Frobenius theorem \cite{HJ1985}. We denote by $\sigma_2\in(0,1)$ the second largest singular value of $\Wbf$, which is a key quantity in the analysis of the mixing time of a Markov chain with transition probabilities given by $\Wbf$.




Under Assumption \ref{assump:reward} the accumulative reward $J^*$ is well defined, while under Assumption \ref{assump:features}, the projection operator $\Pi$ is well defined. If there are some dependent $\phi_{\ell}$, we can simply disregard those dependent feature vectors. Note that the uniform boundedness of $\phi_\ell$ can be guaranteed through feature normalization. Moreover, Assumption \ref{assump:Markov} 
implies that the underlying Markov chain $\Pbf$ is ergodic. First, this guarantees that there exists a unique stationary distribution $\pi$ with positive entries and 
\begin{align*}
\lim_{k\rightarrow\infty}\Eset[\Abf(X_{k})] = \Abf \text{ and } \lim_{k\rightarrow\infty}\Eset[\bbar(X_{k})] = b.    
\end{align*}

Second, it implies that the Markov chain mixes at a geometric rate \cite{Bremaud2000}. In particular, we consider the following definition of the mixing time of a Markov chain.  
\begin{definition}\label{def:mixing}
Given a positive constant $\alpha$, we denote by $\tau(\alpha)$ the mixing time of the Markov chain $\{X_{k}\}$ given as
\begin{align}
\begin{aligned}
&\|\;\Eset[\Abf(X_{k}) - \Abf\,|\,X_{0} = X]\;\| \leq \alpha,\qquad \forall\; X,\,\forall\, k\geq \tau(\alpha) \\
&\|\;\Eset[\bbar(X_{k}) - b\,|\,X_{0} = X]\;\| \leq \alpha,\qquad \forall\; X,\,\forall\, k\geq \tau(\alpha),
\end{aligned}
\label{def_mixing:tau}
\end{align}
where $b$ and $\Abf$ are given in Eqs.\ \eqref{opt_cond} and \eqref{notation:A}, respectively. In addition, under Assumption \ref{assump:Markov} the Markov chain $\{X_{k}\}$ has a geometric mixing time \cite{Bremaud2000}, i.e., there exist a constant $C$ such that given a small constant $\alpha$ we have
\begin{align}
\tau(\alpha) = C\log\left(\frac{1}{\alpha}\right).\label{notation:geo_mix}
\end{align} 
\end{definition} 
Finally, using Eq.\ \eqref{notation:Akbk} and Assumptions \ref{assump:reward} and \ref{assump:features} the induced $2$-norm of $\|\Abf(X_{k})\|$ and $\|b^{v}(X_{k})\|$ can be upper bounded by
\begin{align}
\begin{aligned}
\|\Abf(X_{k})\| &\leq\|z_k(\gamma\phi(s_{k+1}) - \phi(s_k))^T\| \leq (1+\gamma)\sum_{u=0}^{t}(\gamma\lambda)^{t-u}\leq \frac{1+\gamma}{1-\gamma\lambda}\\
\|b^{v}(X_{k})\| &\leq \|r^{v}_{k}z_{k}\| \leq  \frac{R}{1-\gamma\lambda}\cdot
\end{aligned}\label{sec_analysis:Akbk_bound} 
\end{align}
In addition, since $\lim_{k\rightarrow\infty}\Eset[\Abf(X_{k})] = \Abf$ and $\lim_{k\rightarrow\infty}\Eset[\bbar(X_{k})] = b$ we also have
\begin{align}
\|\Abf\| \leq  \frac{1+\gamma}{1-\gamma\lambda}\qquad\text{and}\qquad \|b\|\leq \frac{R}{1-\gamma\lambda},\label{sec_analysis:Ab_bound}     
\end{align}
where recall that $\Abf$ and $b$ are given in Eq.\ \eqref{opt_cond}.


\subsection{Main Results}
In this section, we provide the main results of this paper, which are the finite-time analysis for the convergence of distributed {\sf TD}$(\lambda)$. We first denote by $\delta$
a positive constant satisfying
\begin{align}
\delta = \sigma_{2} + \frac{1-\gamma\lambda}{1+\gamma}\alpha, \label{notation:delta}
\end{align}
where $\sigma_{2}\in(0,1)$ is the second largest singular value of $\Wbf$ and $\alpha$ is a constant chosen such that $\delta\in(0,1)$. Our first result is to establish the rate of convergence of the distributed {\sf TD}$(\lambda)$ when the step size is this constant $\alpha$. In particular, under some proper choice of $\alpha$, we show that the average mean-squared error across the nodes converges with a linear rate to a ball centered at the $\theta^*$. In addition, the size of this ball is a function of $\alpha$.  The following theorem states this result formally.   

\begin{theorem}\label{thm:linear}
Suppose that Assumptions \ref{assump:doub_stoch}--\ref{assump:Markov} hold. Let  $\{\theta_{k}^{v}\}$, for all $v\in\Vcal$, be generated by Algorithm \ref{alg:DTD} and $\sigma_{\min} > 0 $ be the smallest singular value of $-\Abf$. Let $\Psi_{1}$ and $\Psi_{2}$ be the constants defined
\begin{align}
\begin{aligned}
\Psi_{1} &\triangleq  4\left(36 + \frac{(229 + 42R)(1+\gamma)^2\tau(\alpha)}{(1-\gamma\lambda)^2}\right)\\ 
\Psi_{2} &\triangleq \|\theta^*\|^2\Psi_{1} + 2\left(32R^2+ 2\|\theta^*\|^2 + 1\right)\\
&\qquad +  \frac{\Big(50R^2+32(R+1)^3+100(R+\|\theta^*\|)^2\Big)(1+\gamma)^2\tau(\alpha)}{(1-\gamma\lambda)^2}\cdot
\end{aligned}\label{thm_linear:constants}
\end{align}
Let $\alpha_{k} = \alpha$ and $\tau(\alpha)$ be the corresponding mixing time defined in \eqref{def_mixing:tau}, where $\alpha$ satisfies 
\begin{align}
0 < \alpha < \min\left\{\frac{(1-\gamma\lambda)(1-\sigma_{2})}{1+\gamma}\,,\,\frac{(1-\gamma\lambda)\log(2)}{(1+\gamma)\tau(\alpha)}\;,\;\frac{\sigma_{\min}}{\Psi_{1}}\right\}.\label{thm_linear:stepsize}
\end{align}
Then we have for all $k\geq \tau(\alpha)$
\begin{align}
\frac{1}{N}\sum_{v\in\Vcal}\Eset[\|\theta_{k}^{v}-\theta^*\|^2] &\leq \frac{4\Eset[\|\Theta_{0}\|^2]}{N}\delta^{2k} + \Big(20\Eset\left[\|\btheta_{0}-\theta^*\|^2\right] + 16(\|\theta^*\| + R)^2\Big)(1-\sigma_{\min}\alpha)^{k-\tau(\alpha)}\notag\\ 
&\qquad + \frac{4R^2\alpha^2}{(1-\gamma\lambda)^2(1-\delta)^2} + \frac{2\Psi_{2}\alpha}{\sigma_{\min}},\label{thm_linear:Ineq}
\end{align}
where $C$ is given in \eqref{notation:geo_mix} and $\delta$ is defined in \eqref{notation:delta}. 
\end{theorem}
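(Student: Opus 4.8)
The plan is to route everything through the network average $\btheta_k = (1/N)\sum_{v\in\Vcal}\theta_k^v$, whose dynamics \eqref{sec_analysis:theta_bar} are self-contained, and to split the per-agent error with the elementary inequality
\begin{align}
\frac{1}{N}\sum_{v\in\Vcal}\|\theta_k^v - \theta^*\|^2 \;\leq\; \frac{2}{N}\sum_{v\in\Vcal}\|\theta_k^v - \btheta_k\|^2 \;+\; 2\|\btheta_k - \theta^*\|^2. \notag
\end{align}
I would bound the \emph{consensus error} (the first sum) and the \emph{optimization error} $\|\btheta_k-\theta^*\|^2$ independently: the consensus estimate yields the $\delta^{2k}$ transient and the $\alpha^2R^2/((1-\gamma\lambda)^2(1-\delta)^2)$ residual in \eqref{thm_linear:Ineq}, while the optimization estimate yields the $(1-\sigma_{\min}\alpha)^{k-\tau(\alpha)}$ term and the $2\Psi_2\alpha/\sigma_{\min}$ residual.

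For the consensus error, write $\mathbf{M} = \Ibf - \frac{1}{N}\mathbf{1}\mathbf{1}^{T}$ for the projection onto the disagreement subspace, so $\frac{1}{N}\sum_{v}\|\theta_k^v-\btheta_k\|^2 = \frac{1}{N}\|\mathbf{M}\Theta_k\|_F^2$. Multiplying \eqref{sec_proofs:Theta} on the left by $\mathbf{M}$ and using that $\Wbf$ is doubly stochastic (so $\Wbf$ contracts the disagreement subspace at rate $\sigma_2$), together with $\|\Abf(X_k)\|\leq(1+\gamma)/(1-\gamma\lambda)$ from \eqref{sec_analysis:Akbk_bound}, gives the scalar recursion
\begin{align}
\|\mathbf{M}\Theta_{k+1}\|_F \;\leq\; \delta\,\|\mathbf{M}\Theta_k\|_F + \alpha\,\|\Bbf(X_k)\|_F, \notag
\end{align}
with $\delta$ as in \eqref{notation:delta}; the first bound in \eqref{thm_linear:stepsize} forces $\delta\in(0,1)$. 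Unrolling this recursion, bounding the drift by $\|\Bbf(X_k)\|_F \leq \sqrt{N}R/(1-\gamma\lambda)$, squaring, and dividing by $N$ then produces the first and third terms of \eqref{thm_linear:Ineq}.

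The optimization error is the substantive part. With $e_k = \btheta_k - \theta^*$, expanding \eqref{sec_analysis:theta_bar} gives
\begin{align}
\|e_{k+1}\|^2 = \|e_k\|^2 + 2\alpha\, e_k^{T}\big(\Abf(X_k)\btheta_k + \bbar(X_k)\big) + \alpha^2\big\|\Abf(X_k)\btheta_k + \bbar(X_k)\big\|^2. \notag
\end{align}
Splitting the stochastic matrices into their means plus fluctuations, $\Abf(X_k)=\Abf+(\Abf(X_k)-\Abf)$ and likewise for $\bbar(X_k)$, the fixed-point relation \eqref{opt_cond} and the negative definiteness of $\Abf$ (whose negated smallest singular value is $\sigma_{\min}$) turn the deterministic part into the contraction $2\alpha\,e_k^{T}\Abf e_k \leq -2\sigma_{\min}\alpha\|e_k\|^2$. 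The obstacle is the remaining inner products against the \emph{Markovian} fluctuations $\Abf(X_k)-\Abf$ and $\bbar(X_k)-b$: because $e_k$ and $\btheta_k$ depend on $X_k$ through the shared sample path, one cannot simply invoke $\Eset[\Abf(X_k)]=\Abf$.

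To decouple the iterate from the noise I would use the geometric mixing time of Definition~\ref{def:mixing}, in the spirit of \cite{SrikantY2019_FiniteTD}. Two ingredients drive this: a \emph{slow-drift} bound $\|\btheta_k - \btheta_{k-\tau(\alpha)}\| = O(\alpha\,\tau(\alpha))$, obtained by summing $\tau(\alpha)$ increments and invoking \eqref{sec_analysis:Akbk_bound}, and the conditional mixing estimates $\|\Eset[\Abf(X_k)-\Abf\mid X_{k-\tau(\alpha)}]\|\leq\alpha$ and $\|\Eset[\bbar(X_k)-b\mid X_{k-\tau(\alpha)}]\|\leq\alpha$. Replacing $e_k,\btheta_k$ by their $X_{k-\tau(\alpha)}$-measurable, $\tau(\alpha)$-delayed versions (absorbing the swap through the slow-drift bound) and then taking conditional expectations bounds every cross term, with each bounded factor tracked explicitly to assemble the constants $\Psi_1,\Psi_2$ of \eqref{thm_linear:constants}. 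The key bookkeeping is to split each noise contribution into a piece proportional to $\alpha\|e_k\|^2$, dominated by the contraction once $\alpha<\sigma_{\min}/\Psi_1$ (the third bound in \eqref{thm_linear:stepsize}), and a residual proportional to $\alpha^2\tau(\alpha)$ feeding $\Psi_2$. This yields the one-step inequality $\Eset[\|e_{k+1}\|^2]\leq(1-\sigma_{\min}\alpha)\Eset[\|e_k\|^2]+\alpha^2\Psi_2$ valid for $k\geq\tau(\alpha)$; unrolling from $k=\tau(\alpha)$ and summing the geometric residual $\sum_j(1-\sigma_{\min}\alpha)^j\alpha^2\Psi_2 = \alpha\Psi_2/\sigma_{\min}$ gives the second and fourth terms of \eqref{thm_linear:Ineq}, with the additive $16(\|\theta^*\|+R)^2$ and the shift by $\tau(\alpha)$ accounting for the burn-in before mixing takes hold. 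I expect this Markovian-coupling step to be by far the most delicate, since it is where the mixing time, the slow variation of the iterates, and the dissipativity of $\Abf$ must all be balanced at once.
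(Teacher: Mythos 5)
Your proposal is correct and follows essentially the same route as the paper: the same consensus/optimization split, the same contraction of the disagreement component of $\Theta_k$ under $\Wbf$ yielding the $\delta^{2k}$ and $\alpha^2$ terms, the same mixing-time decoupling of the Markovian noise via slow-drift bounds on $\|\btheta_k-\btheta_{k-\tau(\alpha)}\|$ (the paper's Lemmas \ref{lem_const:xbar_bound}--\ref{lem_const:opt_bound} feeding Lemma \ref{lem_const:bias}), and the same one-step contraction $(1-\sigma_{\min}\alpha)$ unrolled from $k=\tau(\alpha)$ with the burn-in handled by bounding $\|\btheta_{\tau(\alpha)}-\theta^*\|^2$ in terms of $\|\btheta_0-\theta^*\|^2$ and $(\|\theta^*\|+R)^2$. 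No substantive differences to report.
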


\begin{remark}
We first note that by \eqref{notation:geo_mix} we have 
\begin{align}
\limsup_{\alpha\rightarrow 0}\alpha\tau(\alpha) = 0.\label{notation:lim_alpha_tau}
\end{align}
Thus, one can choose such an $\alpha$ to satisfy Eq.\ \eqref{thm_linear:stepsize}. Second, as can be seen from our result in \eqref{thm_linear:Ineq} that the variance in our algorithm, i.e., the   quantity 
\begin{align*}
\frac{4R^2\alpha^2}{(1-\gamma\lambda)^2(1-\delta)^2}    
\end{align*}
theoretically explains the empirical observation of {\sf TD}$(\lambda)$, where $\lambda = 1$ gives the best approximation and $\lambda = {0}$ yields faster convergence under large variance. Indeed, the quantity above is smallest when $\lambda = {0}$ and gets larger as $\lambda$ increases to $1$. On the other hand, when $\lambda= 1$ one can see from \eqref{theta*:opt_bound} that $\tilde{J} = \pi J$, the best approximation of $J$ in the subspace spanned by the feature vectors.     
\end{remark}
Our second main result is to show the rate of distributed {TD}$(\lambda)$ under time-varying step sizes $\alpha_{k} = \alpha_{0}\,/\,(k+1)$ for some positive constant $\alpha_{0}$. Under this condition, we show that the average of mean square errors at each agent asymptotically converges to $0$ with a sublinear rate, which is formally stated in the following theorem.

\begin{theorem}\label{thm:sublinear}
Suppose that Assumptions \ref{assump:doub_stoch}--\ref{assump:Markov} hold. Let the sequence $\{\theta_{k}^{v}\}$, for all $v\in\Vcal$, be generated by Algorithm \ref{alg:DTD}. Let $\sigma_{\min} > 0 $ be the smallest singular value of $-\Abf$ and $\alpha_{k} = \alpha_{0}\,/\,(k+1)$ for some $\alpha_{0} \geq 1/\sigma_{\min}$. Denote by $\Psi_{3}$ and $\Psi_{4}$ two constants as 
\begin{align*}
&\Psi_{3} \triangleq  4\left(36 + \frac{(229 + 42R)(1+\gamma)^2}{(1-\gamma\lambda)^2}\right)\\ 
&\Psi_{4} \triangleq \|\theta^*\|^2\Psi_{3} + 2\left(32R^2+ 2\|\theta^*\|^2 + 1 +  \frac{\Big(50R^2+32(R+1)^3+100(R+\|\theta^*\|)^2\Big)(1+\gamma)^2}{(1-\gamma\lambda)^2}\right).
\end{align*} 
In addition, let $\Kcal^*$ be a positive integer such that for all $k\geq \Kcal^*$
\begin{align}
\begin{aligned}
&\sigma_{2} + \frac{1+\gamma}{1-\gamma\lambda}\alpha_{k} \leq \sigma_{2} + \frac{1+\gamma}{1-\gamma\lambda}\alpha =  \delta \in(0,1)\\
&\tau(\alpha_k)\alpha_{k-\tau(\alpha_k)}\leq \min\left\{\frac{(1-\gamma\lambda)\log(2)}{1+\gamma},\frac{\sigma_{\min}}{\Psi_3}\right\}
\end{aligned},\label{thm_sublinear:stepsize}    
\end{align} 
where $\alpha$ satisfies \eqref{thm_linear:stepsize} and $\delta$ is defined in \eqref{notation:delta}. Then we have for all $k\geq \Kcal^*$
\begin{align}
\frac{1}{N}\sum_{v\in\Vcal}|\Eset[\|\theta_{k}^{v}-\theta^*\|^2] 
&\leq \frac{6\Eset[\|\Theta_{\Kcal^*}\|^2]}{N}\delta^{2k-2\Kcal^*} + \frac{2\Kcal^*}{k+1}\Eset\left[\|\btheta_{\Kcal^*}-\theta^*\|^2\right] + \frac{6 R^2\alpha_{0}^2}{(1-\gamma\lambda)^2(1-\delta)^2}\delta^{k} \notag\\
&\qquad  + \frac{6R^2}{(1-\gamma\lambda)^2(1-\delta)^2}\frac{1}{(k+1)^2} + \frac{2\Psi_{4}C\alpha_{0}\log^2(\frac{k+1}{\alpha_{0}})}{k+1} \cdot\label{thm_sublinear:Ineq}
\end{align}
\end{theorem}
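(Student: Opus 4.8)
The plan is to bound the two natural pieces of the error separately and recombine them. Using $\|\theta_k^v - \theta^*\|^2 \le 2\|\theta_k^v - \btheta_k\|^2 + 2\|\btheta_k - \theta^*\|^2$ and writing $Q = \Ibf - \tfrac1N\bone\bone^T$ for the projection onto the disagreement subspace, the left side of \eqref{thm_sublinear:Ineq} is at most $\tfrac2N\Eset[\|Q\Theta_k\|^2] + 2\Eset[\|\btheta_k - \theta^*\|^2]$, where $\|\cdot\|$ denotes the Frobenius norm on matrices. A key structural fact is that these two pieces are decoupled: since $\Wbf$ is doubly stochastic and $\Abf(X_k)$ is common to all agents, averaging \eqref{sec_proofs:Theta} reproduces exactly the single-agent recursion \eqref{sec_analysis:theta_bar} for $\btheta_k$, with no dependence on the disagreement.

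For the consensus term I would left-multiply \eqref{sec_proofs:Theta} by $Q$, use $Q\Wbf = \Wbf - \tfrac1N\bone\bone^T$ so that $\|Q\Wbf\Theta_k\| \le \sigma_2\|Q\Theta_k\|$, and apply $\|\Abf(X_k)\| \le (1+\gamma)/(1-\gamma\lambda)$ from \eqref{sec_analysis:Akbk_bound} to obtain
\[
\|Q\Theta_{k+1}\| \le \Big(\sigma_2 + \tfrac{1+\gamma}{1-\gamma\lambda}\alpha_k\Big)\|Q\Theta_k\| + \alpha_k\|\Bbf(X_k)\|.
\]
For $k \ge \Kcal^*$ the first line of \eqref{thm_sublinear:stepsize} makes the bracket at most $\delta<1$, so unrolling from $\Kcal^*$ gives $\|Q\Theta_k\| \le \delta^{k-\Kcal^*}\|Q\Theta_{\Kcal^*}\| + \sum_{t=\Kcal^*}^{k-1}\delta^{k-1-t}\alpha_t\|\Bbf(X_t)\|$. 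Splitting the noise sum at its midpoint (early terms carry a factor $\delta^{(k-\Kcal^*)/2}$, late terms a factor of order $\alpha_0/(k+1)$) and using $\|\Bbf(X_t)\| \le \sqrt{N}R/(1-\gamma\lambda)$, then squaring and dividing by $N$, produces the $\delta^{2k-2\Kcal^*}$, $\delta^{k}$, and $(k+1)^{-2}$ terms of \eqref{thm_sublinear:Ineq}.

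The heart of the proof is the optimization term $\Eset[\|\btheta_k - \theta^*\|^2]$, a single-agent {\sf TD}$(\lambda)$ iterate under Markovian noise with diminishing step sizes. Expanding $\|\btheta_{k+1}-\theta^*\|^2$ from \eqref{sec_analysis:theta_bar}, the stationary mean of the drift is $\Abf(\btheta_k - \theta^*)$ by the fixed-point relation \eqref{opt_cond}, whose inner product with $\btheta_k - \theta^*$ is at most $-\sigma_{\min}\|\btheta_k - \theta^*\|^2$ because $\Abf$ is negative definite; the obstacle is that $\Abf(X_k)$ and $\bbar(X_k)$ are correlated with $\btheta_k$ through the shared chain. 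Mirroring the proof of Theorem~\ref{thm:linear}, I would condition $\tau(\alpha_k)$ steps into the past, invoke the mixing bounds \eqref{def_mixing:tau} to replace $\Abf(X_k),\bbar(X_k)$ by $\Abf,b$ up to an error $\alpha_k$, and exploit that the iterate drifts only $O(\alpha)$ per step (by \eqref{sec_analysis:Akbk_bound}) over a window of length $\tau(\alpha_k)$ to replace $\btheta_k$ by $\btheta_{k-\tau(\alpha_k)}$. This gives, for $k \ge \Kcal^*$ by the second line of \eqref{thm_sublinear:stepsize}, the one-step recursion
\[
\Eset[\|\btheta_{k+1}-\theta^*\|^2] \le (1-\sigma_{\min}\alpha_k)\Eset[\|\btheta_k-\theta^*\|^2] + \Psi_4\,\alpha_k\,\alpha_{k-\tau(\alpha_k)}\,\tau(\alpha_k).
\]

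Finally I would solve this recursion with $\alpha_k = \alpha_0/(k+1)$ and $\alpha_0 \ge 1/\sigma_{\min}$. The homogeneous factor obeys $\prod_{t=\Kcal^*}^{k}(1-\sigma_{\min}\alpha_t) \le \big(\tfrac{\Kcal^*+1}{k+1}\big)^{\sigma_{\min}\alpha_0} \le \tfrac{\Kcal^*+1}{k+1}$, giving the $\tfrac{2\Kcal^*}{k+1}\Eset[\|\btheta_{\Kcal^*}-\theta^*\|^2]$ term. Substituting $\tau(\alpha_k) = C\log((k+1)/\alpha_0)$ with $\alpha_{k-\tau(\alpha_k)}\approx\alpha_k$ makes the forcing of order $C\alpha_0^2\log(\tfrac{k+1}{\alpha_0})/(k+1)^2$; weighting by $\prod_{j=t+1}^{k}(1-\sigma_{\min}\alpha_j) \le \tfrac{t+1}{k+1}$ and summing (using $\sum_t \log t/t \approx \tfrac12\log^2 t$) yields the dominant $\tfrac{2\Psi_4 C\alpha_0\log^2((k+1)/\alpha_0)}{k+1}$ term, and combining with the consensus bound through the decomposition gives \eqref{thm_sublinear:Ineq}. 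The main obstacle is precisely this coupled bookkeeping: because $\tau(\alpha_k)$ itself grows like $\log k$, the lookback window is time-dependent, so one must verify that $\alpha_{k-\tau(\alpha_k)}$ stays comparable to $\alpha_k$ and that the accumulated bias remains summable; this is where the burn-in threshold $\Kcal^*$ and the requirement $\alpha_0 \ge 1/\sigma_{\min}$ are used, and where the $\log^2$ (rather than $\log$) factor in the final rate originates.
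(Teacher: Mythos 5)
Your proposal follows essentially the same route as the paper: the same decomposition into consensus error (bounded via the contraction $\|Q\Wbf\Theta_k\|\le\sigma_2\|Q\Theta_k\|$ and a midpoint split of the noise sum, which is exactly Lemma~\ref{lem:consensus_const}, case 2) plus the averaged iterate's error (bounded by conditioning $\tau(\alpha_k)$ steps back, using the mixing and drift bounds to obtain the recursion $\Eset[\|\btheta_{k+1}-\theta^*\|^2]\le(1-\sigma_{\min}\alpha_k)\Eset[\|\btheta_k-\theta^*\|^2]+\Psi_4\tau(\alpha_k)\alpha_{k-\tau(\alpha_k)}\alpha_k$, which is Lemma~\ref{lem_tv:bias} combined with \eqref{thm_sublinear:Eq1}), and the same resolution of that recursion via the telescoping product and the integral test giving the $\log^2$ factor. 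The argument is correct and matches the paper's proof in all essential respects.
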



\section{Finite-time analysis of distributed {\sf TD}$(\lambda)$}\label{sec:proofs}
In this section, we provide the analysis for the main results in this paper, that is, the proofs of Theorems \ref{thm:linear} and \ref{thm:sublinear}. We start by first considering an upper bound for the consensus error, $\|\Theta - \1\btheta^{T}\|$,
which measures the difference between the agents' estimate $\theta_{v}$ and their average $\btheta$. The results in this lemma will be used in the analysis of both Theorems \ref{thm:linear} and \ref{thm:sublinear}, so we present here for convenience. 

\begin{lemma}\label{lem:consensus_const}
Suppose that Assumptions \ref{assump:doub_stoch}--\ref{assump:features} hold. Let the sequence $\{\theta_{k}^{v}\}$, for all $v\in\Vcal$, be generated by Algorithm \ref{alg:DTD}. We consider the following two cases

\begin{enumerate}[leftmargin = 5mm]
\item Let $\sigma_2\in(0,1)$ be the second largest singular value of $\Wbf$ and $\alpha_{k} = \alpha$ satisfy 
\begin{align}
0 \leq \alpha < \frac{(1-\sigma_{2})(1-\gamma\lambda)}{1+\gamma}, \label{consensus_const:alpha}
\end{align}
In addition, let $\delta\in(0,1)$ be defined in \eqref{notation:delta}. 
Then we have for all $k\geq 0$
\begin{align}
\|\Theta_{k}-\1\btheta_{k}^{T}\| \leq \delta^{k}\|\Theta_{0}\| + \frac{\sqrt{N}R\alpha}{(1-\gamma\lambda)(1-\delta)}\cdot\label{lem_consensus_const:Ineq}
\end{align}
\item Let $\alpha_{k} = \alpha_{0}/(k+1)$ for some positive constant $\alpha_{0}$ and $\Kcal^*_{1}$ be a positive constant such that  
\begin{align*}
\sigma_{2} + \frac{1+\gamma}{1-\gamma\lambda}\alpha_{k} \leq \sigma_{2} + \frac{1+\gamma}{1-\gamma\lambda}\alpha =  \delta \in(0,1),\qquad \forall k\geq \Kcal^*,    
\end{align*}
where $\alpha$ satisfies Eq.\ \eqref{consensus_const:alpha}.
Then we obtain for all $k\geq \Kcal^*$ that
\begin{align}
\|\Theta_{k}-\1\btheta_{k}^{T}\| &\leq \delta^{k-\Kcal^*}\|\Theta_{\Kcal^*}\| + \frac{\sqrt{N}R\alpha_{0}\delta^{k/2}}{(1-\gamma\lambda)(1-\delta)} + \frac{\sqrt{N}R\alpha_{k/2}}{(1-\gamma\lambda)(1-\delta)}\cdot\label{lem_consensus_tv:Ineq} 
\end{align}
\end{enumerate}
\end{lemma}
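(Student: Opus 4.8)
The plan is to track the consensus error $Q_k := \Theta_k - \1\btheta_k^T$ directly, exploiting the fact that averaging over the network is a linear orthogonal projection. Writing $J = \tfrac{1}{N}\1\1^T$ for the averaging matrix, the definition $\btheta_k = \tfrac{1}{N}\sum_{v}\theta_k^v$ gives $\1\btheta_k^T = J\Theta_k$, so $Q_k = (\Ibf - J)\Theta_k$ and in particular every column of $Q_k$ is orthogonal to $\1$ (since $\1^T Q_k = 0$). The first step is to extract a clean recursion for $Q_k$ from the matrix update \eqref{sec_proofs:Theta}. Left-multiplying \eqref{sec_proofs:Theta} by $\Ibf - J$ and using that $\Wbf$ is doubly stochastic — so that $\Wbf\1 = \1$, $\1^T\Wbf = \1^T$, hence $J\Wbf = \Wbf J = J$ and $(\Ibf - J)\Wbf = \Wbf - J = (\Wbf - J)(\Ibf - J)$ — I obtain
\begin{align*}
Q_{k+1} = (\Wbf - J)Q_k + \alpha_k Q_k\Abf(X_k)^T + \alpha_k(\Ibf - J)\Bbf(X_k).
\end{align*}

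The second step is to reduce this to a scalar recursion on $\|Q_k\|$ by bounding the three terms. Since the columns of $Q_k$ lie in $\1^{\perp}$ and $\1/\sqrt{N}$ is the top singular vector of $\Wbf$ on both sides (by double stochasticity), the operator $\Wbf - J$ annihilates the $\1$-component and acts on the rest with norm $\sigma_2$, giving $\|(\Wbf - J)Q_k\| \leq \sigma_2\|Q_k\|$. For the middle term I invoke the uniform bound $\|\Abf(X_k)\| \leq \tfrac{1+\gamma}{1-\gamma\lambda}$ from \eqref{sec_analysis:Akbk_bound}. For the last term, $\Ibf - J$ is an orthogonal projection, so $\|(\Ibf - J)\Bbf(X_k)\| \leq \|\Bbf(X_k)\|$, and since each row of $\Bbf(X_k)$ has norm at most $\tfrac{R}{1-\gamma\lambda}$, the Frobenius (hence operator) norm satisfies $\|\Bbf(X_k)\| \leq \tfrac{\sqrt{N}R}{1-\gamma\lambda}$. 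Combining,
\begin{align*}
\|Q_{k+1}\| \leq \left(\sigma_2 + \frac{1+\gamma}{1-\gamma\lambda}\alpha_k\right)\|Q_k\| + \frac{\sqrt{N}R}{1-\gamma\lambda}\alpha_k.
\end{align*}
For the constant step size of the first case, $\alpha_k = \alpha$ makes the contraction factor exactly $\delta$, which lies in $(0,1)$ precisely under the hypothesis \eqref{consensus_const:alpha}; unrolling this linear recursion with $\|Q_0\| \leq \|\Theta_0\|$ and summing $\sum_{j\geq 0}\delta^j = 1/(1-\delta)$ yields \eqref{lem_consensus_const:Ineq} at once.

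The genuinely delicate part, and the main obstacle, is the time-varying case. For $k \geq \Kcal^*$ the contraction factor is at most $\delta$ by hypothesis, so unrolling from $\Kcal^*$ gives the convolution
\begin{align*}
\|Q_k\| \leq \delta^{k-\Kcal^*}\|\Theta_{\Kcal^*}\| + \frac{\sqrt{N}R}{1-\gamma\lambda}\sum_{j=\Kcal^*}^{k-1}\delta^{k-1-j}\alpha_j,
\end{align*}
and the work is in controlling the sum for the decreasing sequence $\alpha_j = \alpha_0/(j+1)$. I would split the sum at the midpoint $j = \lfloor k/2\rfloor$: on the early block the geometric weight is negligible, $\delta^{k-1-j} \leq \delta^{k/2-1}$, so bounding $\alpha_j \leq \alpha_0$ and summing the geometric tail produces a term of order $\alpha_0\,\delta^{k/2}/(1-\delta)$; on the late block the step size is already small, $\alpha_j \leq \alpha_{k/2}$, so $\sum_j \delta^{k-1-j} \leq 1/(1-\delta)$ produces a term of order $\alpha_{k/2}/(1-\delta)$. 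Adding the two blocks reproduces exactly the two step-size-dependent terms of \eqref{lem_consensus_tv:Ineq}. The care required here is purely bookkeeping of the split index and the geometric factors; the structural content is entirely contained in the contraction inequality derived above, so once that inequality is in hand both cases follow by standard recursion estimates.
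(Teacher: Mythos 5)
Your proposal is correct and follows essentially the same route as the paper: both derive the recursion $\|\Theta_{k+1}-\1\btheta_{k+1}^T\|\leq(\sigma_2+\tfrac{1+\gamma}{1-\gamma\lambda}\alpha_k)\|\Theta_k-\1\btheta_k^T\|+\tfrac{\sqrt{N}R}{1-\gamma\lambda}\alpha_k$ by projecting the matrix update onto $\1^\perp$, bounding the $\Wbf$-term by $\sigma_2$ via double stochasticity, and then unroll it — geometrically for constant $\alpha$, and with the same midpoint split of the convolution sum for $\alpha_k=\alpha_0/(k+1)$. The only cosmetic difference is that you write the contraction as $(\Wbf-\tfrac{1}{N}\1\1^T)Q_k$ while the paper writes $\Wbf\Qbf\Theta_k$; these coincide since the columns of $Q_k$ are orthogonal to $\1$.
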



\subsection{Constant Step Sizes}\label{sec_proofs:const}
We now consider the case where step sizes $\alpha_k$ are constants, i.e.. $\alpha_{k} = \alpha$, for some positive constant $\alpha$.  
To show the result in Theorem \ref{thm:linear}, we first consider the following lemma about an useful error bound of the bias term  generated by Algorithm \ref{alg:DTD}. For an ease of exposition, the proof of this lemma is presented in the Appendix \ref{apx_const}.

\begin{lemma}\label{lem_const:bias}
Suppose that Assumptions \ref{assump:doub_stoch}--\ref{assump:Markov} hold. Let the sequence $\{\theta_{k}^{v}\}$, for all $v\in\Vcal$, be generated by Algorithm \ref{alg:DTD}. Let $\alpha_{k}=\alpha$ satisfying \eqref{thm_linear:stepsize}. Then for all $k\geq\tau(\alpha)$ we have
\begin{align}
&\Big|\Eset\left[(\btheta_{k}-\theta^*)^{T}(\Abf(X_{k})\btheta_{k} -\Abf\btheta_{k} + \bbar(X_{k}) - b)\,|\,\Fcal_{k-\tau(\alpha)}\right]\Big|\notag\\
& \leq \left(36 + \frac{(228 + 42R)(1+\gamma)^2\tau(\alpha)}{(1-\gamma\lambda)^2}\right)\alpha \Eset\left[\|\btheta_{k}\|^2 \,|\,\Fcal_{k-\tau(\alpha)}\right]\notag\\
&\quad + \left(32R^2+ 2\|\theta^*\|^2 + 1 +  \frac{\Big(48R^2+32(R+1)^3+100(R+\|\theta^*\|)^2\Big)(1+\gamma)^2\tau(\alpha)}{(1-\gamma\lambda)^2}\right)\alpha.\label{lem_const_bias:Ineq}    
\end{align}
\end{lemma}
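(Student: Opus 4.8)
The plan is to exploit the geometric mixing property of Definition~\ref{def:mixing}: although $\Abf(X_k)$ and $\bbar(X_k)$ are correlated with the iterate $\btheta_k$ (which depends on the entire history), that correlation becomes weak once we condition on the past $\tau(\alpha)$ steps, because $\btheta_k$ is close to the $\Fcal_{k-\tau(\alpha)}$-measurable quantity $\btheta_{k-\tau(\alpha)}$. Writing $\tau = \tau(\alpha)$ and setting the drift $\Delta_k = \btheta_k - \btheta_{k-\tau}$, I would first expand
\[
(\btheta_k - \theta^*)^T\big((\Abf(X_k)-\Abf)\btheta_k + (\bbar(X_k)-b)\big)
\]
and substitute $\btheta_k = \btheta_{k-\tau} + \Delta_k$ in each of the three places where the iterate appears. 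This yields a \emph{principal term}, in which every occurrence of the iterate is the lagged $\btheta_{k-\tau}$, together with several \emph{error terms}, each carrying at least one factor of $\Delta_k$.

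For the principal term $(\btheta_{k-\tau}-\theta^*)^T\big((\Abf(X_k)-\Abf)\btheta_{k-\tau} + (\bbar(X_k)-b)\big)$, the iterate factors are $\Fcal_{k-\tau}$-measurable, so they pull out of the conditional expectation and leave $\Eset[\Abf(X_k)-\Abf\,|\,\Fcal_{k-\tau}]$ and $\Eset[\bbar(X_k)-b\,|\,\Fcal_{k-\tau}]$. By the Markov property the conditioning on $\Fcal_{k-\tau}$ reduces to conditioning on $X_{k-\tau}$, so Definition~\ref{def:mixing} applies (as $k\geq\tau$) and both have norm at most $\alpha$, giving a bound of the form $\alpha\,\|\btheta_{k-\tau}-\theta^*\|\,(\|\btheta_{k-\tau}\|+1)$.

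The essential quantitative input for the error terms is a bound on $\|\Delta_k\|$. From the averaged recursion~\eqref{sec_analysis:theta_bar}, each step obeys $\|\btheta_{j+1}-\btheta_j\|\leq\alpha\big(\tfrac{1+\gamma}{1-\gamma\lambda}\|\btheta_j\|+\tfrac{R}{1-\gamma\lambda}\big)$ by the operator-norm bounds~\eqref{sec_analysis:Akbk_bound}. Summing over the window $[k-\tau,k]$ and invoking the step-size condition~\eqref{thm_linear:stepsize}, which enforces $\alpha\tau\tfrac{1+\gamma}{1-\gamma\lambda}\leq\log 2$, I would first show that all intermediate iterates satisfy $\|\btheta_j\|\leq 2\|\btheta_k\|+O(1)$, so the growth factor over the window stays bounded, and then conclude $\|\Delta_k\|\leq O\big(\alpha\tau\tfrac{1+\gamma}{1-\gamma\lambda}\big)(\|\btheta_k\|+R)$. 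Each error term is then controlled by $\|\Abf(X_k)-\Abf\|\leq\tfrac{2(1+\gamma)}{1-\gamma\lambda}$, resp.\ $\|\bbar(X_k)-b\|\leq\tfrac{2R}{1-\gamma\lambda}$, multiplied by $\|\Delta_k\|$ and by $\|\btheta_k-\theta^*\|$ or $\|\btheta_k\|$, producing contributions of order $\alpha\tau\tfrac{(1+\gamma)^2}{(1-\gamma\lambda)^2}$ times quadratics in the iterate.

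Finally, I would collect everything, converting each cross product $\|\btheta_k\|$, $\|\btheta_{k-\tau}\|\,\|\btheta_k\|$, etc.\ into quadratic-plus-constant form via $2ab\leq a^2+b^2$ together with $\|\btheta_{k-\tau}\|\leq\|\btheta_k\|+\|\Delta_k\|$, so that the right-hand side reduces to a coefficient times $\Eset[\|\btheta_k\|^2\,|\,\Fcal_{k-\tau}]$ plus an additive constant. The main obstacle here is not conceptual but the careful bookkeeping of the last two steps: propagating the window bound on $\|\btheta_j\|$, keeping the drift estimate tight, and tracking the numerical constants so that they assemble into exactly the coefficients $(228+42R)$ and $\big(48R^2+32(R+1)^3+100(R+\|\theta^*\|)^2\big)$ appearing in~\eqref{lem_const_bias:Ineq}.
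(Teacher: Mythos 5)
Your proposal is correct and follows essentially the same route as the paper: the same substitution $\btheta_k = \btheta_{k-\tau(\alpha)} + \Delta_k$ splitting the expression into a principal term (handled via $\Fcal_{k-\tau(\alpha)}$-measurability and the mixing-time bound of Definition~\ref{def:mixing}, as in Lemma~\ref{lem_mixing:bound}) and drift-carrying error terms (handled via the window bound on $\|\btheta_j\|$ and the resulting estimate $\|\Delta_k\| = O(\alpha\tau(\alpha))(\|\btheta_k\|+R)$, as in Lemmas~\ref{lem_const:xbar_bound} and~\ref{lem_const:opt_bound}), followed by the same $2ab\leq a^2+b^2$ bookkeeping. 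The only part left implicit is the verification of the specific numerical constants, which you correctly identify as mechanical.
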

We are now ready to proceed the analysis of Theorem \ref{thm:linear} as follows. 
\begin{proof}[\textbf{Proof of Theorem} \ref{thm:linear}]
Recall from Eq.\ \eqref{sec_analysis:theta_bar} with $\alpha_{k} = \alpha$ that
\begin{align*}
\btheta_{k+1} &= \btheta_{k} + \alpha\Abf(X_{k})\btheta_{k} + \alpha_{k}\bbar(X_{k}), 
\end{align*}
which gives for all $k\geq0$
\begin{align}
\|\btheta_{k+1}-\theta^*\|^2 &= \|\btheta_{k}-\theta^* +  \alpha\Abf(X_{k})\btheta_{k} + \alpha\bbar(X_{k})\|^2\notag\\
&= \|\btheta_{k}-\theta^*\|^2 + \|\alpha\Abf(X_{k})\btheta_{k} +  \alpha\bbar(X_{k})\|^2 + 2\alpha(\btheta_{k}-\theta^*)^{T}( \Abf(X_{k})\btheta_{k} + \bbar(X_{k}))\notag\\
&= \|\btheta_{k}-\theta^*\|^2 + \alpha^2\|\Abf(X_{k})\btheta_{k} + \bbar(X_{k})\|^2 + 2\alpha(\btheta_{k}-\theta^*)^{T}(\Abf\btheta_{k}+b)\notag\\
&\qquad + 2\alpha(\btheta_{k}-\theta^*)^{T}(\Abf(X_{k})\btheta_{k} + \bbar(X_{k})-\Abf\btheta_{k} - b)\notag\\
&\leq \|\btheta_{k}-\theta^*\|^2 + 2\alpha^2\|\Abf(X_{k})\btheta_{k}\|^2 + 2\alpha^2\|\bbar(X_{k})\|^2 + 2\alpha(\btheta_{k}-\theta^*)^{T}(\Abf\btheta_{k}+b)\notag\\
&\qquad + 2\alpha(\btheta_{k}-\theta^*)^{T}(\Abf(X_{k})\btheta_{k} -\Abf\btheta_{k} + \bbar(X_{k}) - b)\notag\\
& \leq \|\btheta_{k}-\theta^*\|^2 + \frac{2(1+\gamma)^2\alpha^2}{(1-\gamma\lambda)^2}\|\btheta_{k}\|^2 + \frac{2R^2\alpha^2}{(1-\gamma\lambda)^2} + 2\alpha(\btheta_{k}-\theta^*)^{T}\Abf(\btheta_{k}-\theta^*)\notag\\
&\qquad + 2\alpha\Big|(\btheta_{k}-\theta^*)^{T}(\Abf(X_{k})\btheta_{k} -\Abf\btheta_{k} + \bbar(X_{k}) - b)\Big|,\label{thm_linear:Eq1}
\end{align}
where in the last inequality we use Eq.\ \eqref{sec_analysis:Akbk_bound} and the fact that $\Abf\theta^* = -b$. Next, since $\Abf$ is negative definite and let $\sigma_{\min} > 0 $ be the smallest singular value of $-\Abf$, we have
\begin{align}
(\btheta_{k}-\theta^*)^T\Abf(\btheta_{k}-\theta^*) \leq -\sigma_{\min}\|\btheta_{k}-\theta^*\|^2.     \label{thm_linear:Eq1d}
\end{align}
We now take the expectation on both sides of Eq.\ \eqref{thm_linear:Eq1} and use Eqs.\ \eqref{lem_const_bias:Ineq} and \eqref{thm_linear:Eq1d} to have
\begin{align*}
&\Eset[\|\btheta_{k+1}-\theta^*\|^2]\notag\\
&\leq (1-2\sigma_{\min}\alpha)\Eset\left[\|\btheta_{k}-\theta^*\|^2\right]  + \frac{2(1+\gamma)^2\alpha^2}{(1-\gamma\lambda)^2}\Eset[\|\btheta_{k}\|^2] + \frac{2R^2\alpha^2}{(1-\gamma\lambda)^2}\notag\\
&\quad + 2\left(36 + \frac{(228 + 42R)(1+\gamma)^2\tau(\alpha)}{(1-\gamma\lambda)^2}\right)\alpha^2 \Eset\left[\|\btheta_{k}\|^2 \right]\notag\\
&\quad+ 2\left(32R^2+ 2\|\theta^*\|^2 + 1 +  \frac{\Big(48R^2+32(R+1)^3+100(R+\|\theta^*\|)^2\Big)(1+\gamma)^2\tau(\alpha)}{(1-\gamma\lambda)^2}\right)\alpha^2\notag\\
&\leq (1-2\sigma_{\min}\alpha)\Eset\left[\|\btheta_{k}-\theta^*\|^2\right]  + 2\left(36 + \frac{(229 + 42R)(1+\gamma)^2\tau(\alpha)}{(1-\gamma\lambda)^2}\right)\alpha^2 \Eset\left[\|\btheta_{k}\|^2 \right]\notag\\
&\qquad + 2\left(32R^2+ 2\|\theta^*\|^2 + 1 +  \frac{\Big(50R^2+32(R+1)^3+100(R+\|\theta^*\|)^2\Big)(1+\gamma)^2\tau(\alpha)}{(1-\gamma\lambda)^2}\right)\alpha^2,
\end{align*}
which by applying the Cauchy-Schwarz inequality to the second term yields  
\begin{align}
&\Eset[\|\btheta_{k+1}-\theta^*\|^2]\notag\\
&\leq (1-2\sigma_{\min}\alpha)\Eset\left[\|\btheta_{k}-\theta^*\|^2\right]+ 4\left(36 + \frac{(229 + 42R)(1+\gamma)^2\tau(\alpha)}{(1-\gamma\lambda)^2}\right)\alpha^2 \Eset\left[\|\btheta_{k}-\theta^*\|^2 \right]\notag\\
&\qquad + 4\left(36 + \frac{(229 + 42R)(1+\gamma)^2\tau(\alpha)}{(1-\gamma\lambda)^2}\right)\|\theta^*\|^2\alpha^2\notag\\
&\qquad + 2\left(32R^2+ 2\|\theta^*\|^2 + 1 +  \frac{\Big(50R^2+32(R+1)^3+100(R+\|\theta^*\|)^2\Big)(1+\gamma)^2\tau(\alpha)}{(1-\gamma\lambda)^2}\right)\alpha^2\notag\\
&= (1-2\sigma_{\min}\alpha)\Eset\left[\|\btheta_{k}-\theta^*\|^2\right] + \Psi_{1}\alpha^2 \Eset\left[\|\btheta_{k}-\theta^*\|^2 \right] + \Psi_{2}\alpha^2,\label{thm_linear:Eq2}
\end{align}
where $\Psi_{1}$ and $\Psi_{2}$ are two constants defined as 
\begin{align*}
\Psi_{1} &\triangleq  4\left(36 + \frac{(229 + 42R)(1+\gamma)^2\tau(\alpha)}{(1-\gamma\lambda)^2}\right)\\ 
\Psi_{2} &\triangleq \|\theta^*\|^2\Psi_{1} + 2\left(32R^2+ 2\|\theta^*\|^2 + 1\right)\notag\\ 
&\qquad + \frac{2\Big(50R^2+32(R+1)^3+100(R+\|\theta^*\|)^2\Big)(1+\gamma)^2\tau(\alpha)}{(1-\gamma\lambda)^2}\cdot
\end{align*}
Since $\alpha$ satisfies Eq.\ \eqref{thm_linear:stepsize} we have
$
-2\sigma_{min} + \Psi_{1}\alpha  \leq -\sigma_{min}$, which by substituting into Eq.\ \eqref{thm_linear:Eq2} yields for all $k\geq \tau(\alpha)$
\begin{align}
\Eset[\|\btheta_{k+1}-\theta^*\|^2] &\leq (1-\sigma_{\min}\alpha)\Eset\left[\|\btheta_{k}-\theta^*\|^2\right] + \Psi_{2}\alpha^2\notag\\
&\leq (1-\sigma_{\min}\alpha)^{k+1-\tau(\alpha)}\Eset\left[\|\btheta_{\tau(\alpha)}-\theta^*\|^2\right] + \Psi_{2}\alpha^2\sum_{u=\tau(\alpha)}^{k}(1-\sigma_{\min}\alpha)^{k-u}\notag\\
&\leq (1-\sigma_{\min}\alpha)^{k+1-\tau(\alpha)}\Eset\left[\|\btheta_{\tau(\alpha)}-\theta^*\|^2\right] + \frac{\Psi_{2}\alpha}{\sigma_{\min}}\cdot \label{thm_linear:Eq3a}
\end{align}
On the other hand, using Eq.\ \eqref{sec_analysis:theta_bar} with $\alpha_{k} = \alpha$ we consider 
\begin{align*}
\btheta_{k+1} - \btheta_{0} = \btheta_{k} - \btheta_{0} + \alpha\Abf(X_k)(\btheta_k-\btheta_{0}) + \alpha(\Abf(X_k)\btheta_{0}+\bbar(X_k)),
\end{align*}
which by using Eq. \eqref{sec_analysis:Akbk_bound} yields 
\begin{align*}
\|\btheta_{k+1} - \btheta_{0}\| &\leq \left(1+\frac{(1+\gamma)\alpha}{1-\gamma\lambda}\right)\|\btheta_{k} - \btheta_{0}\| + \alpha\left(\frac{(1+\gamma)}{1-\gamma\lambda}\|\btheta_{0}\| +\frac{R}{1-\gamma\lambda}\right)\notag\\
&\leq \left(\frac{(1+\gamma)\|\btheta_{0}\| + R}{1-\gamma\lambda} \right)\alpha\sum_{u=0}^{k}\left(1+\frac{(1+\gamma)\alpha}{1-\gamma\lambda}\right)^{k-u}\notag\\
&= \left(\frac{(1+\gamma)\|\btheta_{0}\| + R}{1-\gamma\lambda} \right)\alpha\left(1+\frac{(1+\gamma)\alpha}{1-\gamma\lambda}\right)^{k}\frac{1-\left(1+\frac{(1+\gamma)\alpha}{1-\gamma\lambda}\right)^{-k-1}}{1-\left(1+\frac{(1+\gamma)\alpha}{1-\gamma\lambda}\right)^{-1}}\notag\\
&\leq \left(\frac{(1+\gamma)\|\btheta_{0}\| + R}{1-\gamma\lambda} \right)\alpha\left(1+\frac{(1+\gamma)\alpha}{1-\gamma\lambda}\right)^{k+1}\frac{1-\gamma\lambda}{(1+\gamma)\alpha}\notag\\
&= \frac{(1+\gamma)\|\btheta_{0}\| + R}{1+\gamma} \left(1+\frac{(1+\gamma)\alpha}{1-\gamma\lambda}\right)^{k+1}\notag\\
&\leq \frac{(1+\gamma)\|\btheta_{0}\| + R}{1+\gamma}\exp\left\{\frac{(k+1)(1+\gamma)\alpha}{1-\gamma\lambda}\right\},
\end{align*}
where the last inequality is due to the relation $1+x\leq \exp(x)$ for any $x\geq0$. Thus, since $\alpha$ satisfies \eqref{thm_linear:stepsize}, i.e., $\frac{\alpha\tau(\alpha)(1+\gamma)}{1-\gamma\lambda} \leq \log(2)$, the preceding relation yields
\begin{align*}
\|\btheta_{\tau(\alpha)} - \btheta_{0}\| &\leq \frac{(1+\gamma)\|\btheta_{0}\| + R}{1+\gamma}\exp\left\{\frac{\alpha\tau(\alpha)(1+\gamma)}{1-\gamma\lambda}\right\}\leq \frac{2(1+\gamma)\|\btheta_{0}\| + 2R}{1+\gamma}\notag\\
&\leq 2\|\btheta_{0}-\theta^*\| +  2(\|\theta^*\| + R),
\end{align*}
which implies that
\begin{align*}
\|\btheta_{\tau(\alpha)} - \theta^*\|^2 \leq 2\|\btheta_{\tau(\alpha)} - \btheta_{0}\|^2 + 2\|\btheta_{0} - \theta^*\|^2 \leq 10\|\btheta_{0}-\theta^*\| +  8(\|\theta^*\| + R)^2.
\end{align*}
Substituting the preceding relation into Eq.\ \eqref{thm_linear:Eq3a} yields
\begin{align}
\Eset[\|\btheta_{k+1}-\theta^*\|^2] &\leq (1-\sigma_{\min}\alpha)^{k+1-\tau(\alpha)}\Big(10\Eset\left[\|\btheta_{0}-\theta^*\|^2\right] + 8(\|\theta^*\| + R)^2\Big) + \frac{\Psi_{2}\alpha}{\sigma_{\min}}\cdot \label{thm_linear:Eq3b}
\end{align}
Next, note that $\alpha$ also satisfies the condition in Eq.\ \eqref{consensus_const:alpha}, hence, by Eq.\ \eqref{lem_consensus_const:Ineq} we have
\begin{align*}
\sum_{v\in\Vcal}\|\theta_{k}^{v}-\btheta_{k}\|^2 \leq    2\delta^{2k}\|\Theta_{0}\|^2  + \frac{2NR^2\alpha^2}{(1-\gamma\lambda)^2(1-\delta)^2}, 
\end{align*}
which by using Eq.\ \eqref{thm_linear:Eq3b} gives Eq.\ \eqref{thm_linear:Ineq}, i.e.,
\begin{align*}
&\frac{1}{N}\sum_{v\in\Vcal}\Eset[\|\theta_{k}^{v}-\theta^*\|^2] \leq \frac{2}{N}\sum_{v\in\Vcal}\Big(\Eset[\|\theta_{k}^{v}-\btheta_{k}\|^2] + \Eset[\|\btheta_{k}-\theta^*\|^2] \Big)\notag\\ 
&\leq \frac{4\Eset[\|\Theta_{0}\|^2]}{N}\delta^{2k} + \Big(20\Eset\left[\|\btheta_{0}-\theta^*\|^2\right] + 16(\|\theta^*\| + R)^2\Big)(1-\sigma_{\min}\alpha)^{k-\tau(\alpha)}\notag\\ 
&\qquad + \frac{4R^2\alpha^2}{(1-\gamma\lambda)^2(1-\delta)^2} + \frac{2\Psi_{2}\alpha}{\sigma_{\min}}\cdot
\end{align*}
\end{proof}

\subsection{Time-Varying Step Sizes}
In this section, we provide the analysis of Theorem \ref{thm:sublinear}, that is, we consider the case $\alpha_{t} = \alpha_{0}\,/\,(t+1)$ for some positive constant $\alpha_{0}$. Similar to the case of constant step sizes, we consider the following important lemma about the bias term in our analysis considered later. The analysis of this lemma is presented in Appendix for completeness.  

\begin{lemma}\label{lem_tv:bias}
Suppose that Assumptions \ref{assump:doub_stoch}--\ref{assump:Markov} hold. Let the sequence $\{\theta_{k}^{v}\}$, for all $v\in\Vcal$, be generated by Algorithm \ref{alg:DTD}. Let $\alpha_{k}$ be a nonnegative and nonincreasing sequence of step sizes satisfying $\lim_{k\rightarrow\infty}\alpha_{k} = 0$, and let $\tau(\alpha_k)$ be the mixing time associated with $\alpha_k$. We denote by $\Kcal^*$ a positive integer defined in \eqref{thm_sublinear:stepsize}.
Then for all $k\geq\Kcal^*$ we have
\begin{align}
&\left|\Eset\left[(\btheta_{k}-\theta^*)^{T}(\Abf(X_{k})\btheta_{k} -\Abf\btheta_{k} + \bbar(X_{k}) - b)\,|\,\Fcal_{k-\tau(\alpha_{k})}\right]\right|\notag\\
&\qquad\leq \left(36 + \frac{(228 + 42R)(1+\gamma)^2}{(1-\gamma\lambda)^2}\right)\tau(\alpha_k)\alpha_{k-\tau(\alpha_{k})} \Eset\left[\|\btheta_{k}\|^2 \,|\,\Fcal_{k-\tau(\alpha_{k})}\right]\notag\\
&\quad\qquad +(32R^2+ 2\|\theta^*\|^2 + 1)\tau(\alpha_k)\alpha_{k-\tau(\alpha_{k})}\notag\\
&\quad\qquad +  \frac{\Big(48R^2+32(R+1)^3+100(R+\|\theta^*\|)^2\Big)(1+\gamma)^2}{(1-\gamma\lambda)^2}\tau(\alpha_k)\alpha_{k-\tau(\alpha_{k})}.\label{lem_tv_bias:Ineq}    
\end{align}
\end{lemma}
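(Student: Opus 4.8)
The plan is to mirror the proof of Lemma \ref{lem_const:bias} for constant step sizes, carefully tracking the dependence on the decaying step sizes. Abbreviating $\tau := \tau(\alpha_k)$, I would split the bias term as
$$B_k = (\btheta_k - \theta^*)^T(\Abf(X_k) - \Abf)\btheta_k + (\btheta_k - \theta^*)^T(\bbar(X_k) - b).$$
The core difficulty is that $\btheta_k$ and $X_k$ are both measurable functions of the trajectory up to time $k$, so they are statistically coupled and the mixing bound \eqref{def_mixing:tau} cannot be applied directly. To decouple them I would introduce the $\tau$-step-delayed iterate $\btheta_{k-\tau}$, which is $\Fcal_{k-\tau}$-measurable, replace $\btheta_k$ by $\btheta_{k-\tau}$ inside $B_k$, and control the replacement error separately from the mixing contribution.

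First I would establish a drift bound over the window $\{k-\tau,\ldots,k\}$. From the averaged recursion \eqref{sec_analysis:theta_bar} and the uniform bounds \eqref{sec_analysis:Akbk_bound}, each step satisfies $\|\btheta_{j+1} - \btheta_j\| \leq \alpha_j\big(\tfrac{1+\gamma}{1-\gamma\lambda}\|\btheta_j\| + \tfrac{R}{1-\gamma\lambda}\big)$. Summing over the window and using that $\{\alpha_j\}$ is nonincreasing, so $\alpha_j \leq \alpha_{k-\tau}$ there, together with the condition $\tau\alpha_{k-\tau}(1+\gamma)/(1-\gamma\lambda) \leq \log(2)$ from \eqref{thm_sublinear:stepsize} (which keeps the discrete Gr\"onwall factor $(1+\tfrac{(1+\gamma)\alpha_{k-\tau}}{1-\gamma\lambda})^{\tau} \leq 2$ bounded via $1+x\leq e^x$), I would obtain $\|\btheta_k - \btheta_{k-\tau}\| \leq c_1\,\tau\alpha_{k-\tau}(\|\btheta_k\| + R)$ for an absolute constant $c_1$. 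This is exactly the step that generates the factor $\tau(\alpha_k)\alpha_{k-\tau(\alpha_k)}$ appearing in the statement, and it is the only place where the nonincreasing hypothesis is used in an essential way.

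Next I would feed this into the two pieces of $B_k$. For the ``pure delayed'' term $(\btheta_{k-\tau}-\theta^*)^T(\Abf(X_k)-\Abf)\btheta_{k-\tau}$, conditioning on $\Fcal_{k-\tau}$ lets me pull out the measurable factors and invoke $\|\Eset[\Abf(X_k)-\Abf\mid\Fcal_{k-\tau}]\| \leq \alpha_k$ from \eqref{def_mixing:tau} (valid since $k-(k-\tau)=\tau=\tau(\alpha_k)$), giving a contribution of order $\alpha_k\|\btheta_{k-\tau}\|(\|\btheta_{k-\tau}\|+\|\theta^*\|)$; since $\alpha_k \leq \alpha_{k-\tau} \leq \tau\alpha_{k-\tau}$, it is absorbed into the advertised bound, and the analogous argument handles $\bbar(X_k)-b$. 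For the remaining replacement-error terms (those carrying a factor $\btheta_k-\btheta_{k-\tau}$), I would not need mixing at all: I would use the uniform bounds $\|\Abf(X_k)\|,\|\Abf\| \leq \tfrac{1+\gamma}{1-\gamma\lambda}$ and $\|\bbar(X_k)\|,\|b\|\leq\tfrac{R}{1-\gamma\lambda}$ from \eqref{sec_analysis:Akbk_bound}--\eqref{sec_analysis:Ab_bound} together with the drift bound to produce terms of order $\tau\alpha_{k-\tau}$ times quadratic expressions in $\|\btheta_k\|$, $\|\theta^*\|$, and $R$.

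The final step is bookkeeping: I would apply Young's inequality $2ab\leq a^2+b^2$ repeatedly to convert every cross term into multiples of $\|\btheta_k\|^2$, $\|\theta^*\|^2$, and powers of $R$, using $\|\btheta_{k-\tau}\|\leq\|\btheta_k\|+\|\btheta_k-\btheta_{k-\tau}\|$ and the drift bound once more to re-express everything through $\Eset[\|\btheta_k\|^2\mid\Fcal_{k-\tau}]$, exactly as required by the statement. Collecting the numerical constants should reproduce $36$, $(228+42R)$, $32R^2+2\|\theta^*\|^2+1$, and $48R^2+32(R+1)^3+100(R+\|\theta^*\|)^2$. I expect the main obstacle to be precisely this constant tracking, combined with the careful use of the nonincreasing property to collapse the per-step factors $\alpha_j$ into the single window factor $\tau(\alpha_k)\alpha_{k-\tau(\alpha_k)}$; structurally the argument is identical to the constant-step case of Lemma \ref{lem_const:bias}, so once the drift bound over the window is in place the remainder is routine, if tedious, estimation.
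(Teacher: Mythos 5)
Your proposal is correct and follows essentially the same route as the paper: the paper likewise introduces the delayed iterate $\btheta_{k-\tau(\alpha_k)}$, proves the window drift bound (Lemma \ref{lem_tv:xbar_bound}, using the nonincreasing step sizes and the condition $\tau(\alpha_k)\alpha_{k-\tau(\alpha_k)}(1+\gamma)/(1-\gamma\lambda)\leq\log 2$ exactly as you describe), applies the mixing bound only to the pure delayed term (Lemma \ref{lem_mixing_tv:bound}) and the uniform norm bounds to the replacement-error cross terms (Lemma \ref{lem_tv:opt_bound}), and finishes with the same Young-inequality bookkeeping, collapsing $\sum_{t=k-\tau(\alpha_k)}^{k-1}\alpha_t\leq\tau(\alpha_k)\alpha_{k-\tau(\alpha_k)}$ at the end. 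No gaps to flag.
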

Using the preceding lemma, we now show the result stated in Theorem \ref{thm:sublinear}. The analysis is similar to the one of Theorem \ref{thm:linear}.

\begin{proof}[Proof of Theorem \ref{thm:sublinear}]
Similar to Eq.\ \eqref{thm_linear:Eq2}, by using Eq.\ \eqref{lem_tv_bias:Ineq} we obtain for all $k\geq \Kcal^*$
\begin{align*}
&\Eset[\|\btheta_{k+1}-\theta^*\|^2]\notag\\ 
&\leq (1-2\sigma_{\min}\alpha_{k})\Eset\left[\|\btheta_{k}-\theta^*\|^2\right]  + \frac{2(1+\gamma)^2\alpha_{k}^2}{(1-\gamma\lambda)^2}\Eset[\|\btheta_{k}\|^2] + \frac{2R^2\alpha_{k}^2}{(1-\gamma\lambda)^2}\notag\\
&\quad+ 2\left(36 + \frac{(228 + 42R)(1+\gamma)^2}{(1-\gamma\lambda)^2}\right)\tau(\alpha_k)\alpha_{k-\tau(\alpha_{k})}\alpha_{k} \Eset\left[\|\btheta_{k}\|^2 \,|\,\Fcal_{k-\tau(\alpha_{k})}\right]\notag\\
&\quad + 2\left(32R^2+ 2\|\theta^*\|^2 + 1\right)\tau(\alpha_k)\alpha_{k-\tau(\alpha_{k})}\alpha_{k}\notag\\
&\quad +  \frac{2\Big(48R^2+32(R+1)^3+100(R+\|\theta^*\|)^2\Big)(1+\gamma)^2}{(1-\gamma\lambda)^2}\tau(\alpha_k)\alpha_{k-\tau(\alpha_{k})}\alpha_{k},
\end{align*}
which yields
\begin{align}
&\Eset[\|\btheta_{k+1}-\theta^*\|^2]\notag\\ 
&\leq (1-2\sigma_{\min}\alpha_{k})\Eset\left[\|\btheta_{k}-\theta^*\|^2\right] \notag\\
&\quad + 2\left(36 + \frac{(229 + 42R)(1+\gamma)^2}{(1-\gamma\lambda)^2}\right)\tau(\alpha_k)\alpha_{k-\tau(\alpha_{k})}\alpha_{k} \Eset\left[\|\btheta_{k}\|^2 \right]\notag\\
&\quad + 2\left(32R^2+ 2\|\theta^*\|^2 + 1\right)\tau(\alpha_k)\alpha_{k-\tau(\alpha_{k})}\alpha_{k}\notag\\
&\quad + \frac{2\Big(50R^2+32(R+1)^3+100(R+\|\theta^*\|)^2\Big)(1+\gamma)^2}{(1-\gamma\lambda)^2}\tau(\alpha_k)\alpha_{k-\tau(\alpha_{k})}\alpha_{k}\allowdisplaybreaks\notag\\
&\leq (1-2\sigma_{\min}\alpha_{k})\Eset\left[\|\btheta_{k}-\theta^*\|^2\right] \notag\\
&\quad + 4\left(36 + \frac{(229 + 42R)(1+\gamma)^2}{(1-\gamma\lambda)^2}\right)\tau(\alpha_k)\alpha_{k-\tau(\alpha_{k})}\alpha_{k} \Eset\left[\|\btheta_{k}-\theta^*\|^2 \right]\notag\\
&\quad + 4\left(36 + \frac{(229 + 42R)(1+\gamma)^2}{(1-\gamma\lambda)^2}\right)\|\theta^*\|^2\tau(\alpha_k)\alpha_{k-\tau(\alpha_{k})}\alpha_{k}\notag\\
&\quad + 2\left(32R^2+ 2\|\theta^*\|^2 + 1\right)\tau(\alpha_k)\alpha_{k-\tau(\alpha_{k})}\alpha_{k}\notag\\
&\quad +  \frac{2\Big(50R^2+32(R+1)^3+100(R+\|\theta^*\|)^2\Big)(1+\gamma)^2}{(1-\gamma\lambda)^2}\tau(\alpha_k)\alpha_{k-\tau(\alpha_{k})}\alpha_{k}\notag\\
&= (1-2\sigma_{\min}\alpha_{k})\Eset\left[\|\btheta_{k}-\theta^*\|^2\right] + \Psi_{3}\tau(\alpha_k)\alpha_{k-\tau(\alpha_{k})}\alpha_{k} \Eset\left[\|\btheta_{k}-\theta^*\|^2 \right] + \Psi_{4}\tau(\alpha_k)\alpha_{k-\tau(\alpha_{k})}\alpha_{k},\label{thm_sublinear:Eq1}
\end{align}
where $\Psi_{3}$ and $\Psi_{4}$ are two constants defined as 
\begin{align*}
&\Psi_{3} \triangleq  4\left(36 + \frac{(229 + 42R)(1+\gamma)^2}{(1-\gamma\lambda)^2}\right)\\ 
&\Psi_{4} \triangleq \|\theta^*\|^2\Psi_{3} + 2\left(32R^2+ 2\|\theta^*\|^2 + 1 +  \frac{\Big(50R^2+32(R+1)^3+100(R+\|\theta^*\|)^2\Big)(1+\gamma)^2}{(1-\gamma\lambda)^2}\right).
\end{align*}
Recall from \eqref{thm_sublinear:stepsize} that 
\begin{align*}
-2\sigma_{\min} + \Psi_{3}\tau(\alpha_k)\alpha_{k-\tau(\alpha_k)} \leq -\sigma_{\min},
\end{align*}
which by substituting into the preceding relation yields for all $k\geq \Kcal^*$
\begin{align}
\Eset[\|\btheta_{k+1}-\theta^*\|^2] &\leq (1-\sigma_{\min}\alpha_{k})\Eset\left[\|\btheta_{k}-\theta^*\|^2\right] + \Psi_{4}\tau(\alpha_k)\alpha_{k-\tau(\alpha_{k})}\alpha_{k}\notag\\
&\leq \left(1-\frac{\sigma_{\min}\alpha_{0}}{k+1}\right)\Eset\left[\|\btheta_{k}-\theta^*\|^2\right] + \Psi_{4}\tau(\alpha_k)\alpha_{k-\tau(\alpha_{k})}\alpha_{k}\notag\\
&\leq \left(1-\frac{1}{k+1}\right)\Eset\left[\|\btheta_{k}-\theta^*\|^2\right] + \frac{\Psi_{4}\alpha_{0}^2 C\log(\frac{k+1}{\alpha_{0}})}{(k+1)(k+1-C\log(\frac{k+1}{\alpha_{0}}))}, \label{thm_sublinear:Eq1a}
\end{align}
where in the last inequality we use $\alpha_{0}\geq 1/\sigma_{\min}$ and Eq.\ \eqref{notation:geo_mix}. Iteratively updating Eq.\ \eqref{thm_sublinear:Eq1a} we have for all $k\geq \Kcal^*$ 
\begin{align}
&\Eset[\|\btheta_{k+1}-\theta^*\|^2] \notag\\
&\leq \frac{\Kcal^*}{k+1}\Eset\left[\|\btheta_{\Kcal^*}-\theta^*\|^2\right] + \Psi_{4}\alpha_{0}^2 \sum_{t=\Kcal^*}^{k}\frac{C\log(\frac{t+1}{\alpha_{0}})}{(t+1)(t+1-C\log(\frac{t+1}{\alpha_{0}}))}\prod_{u=t+1}^{k}\frac{u}{u+1}\notag\\
&\leq \frac{\Kcal^*}{k+1}\Eset\left[\|\btheta_{\Kcal^*}-\theta^*\|^2\right] + \frac{\Psi_{4}\alpha_{0}^2}{k+1} \sum_{t=\Kcal^*}^{k}\frac{C\log(\frac{t+1}{\alpha_{0}})}{(t+1-C\log(\frac{t+1}{\alpha_{0}}))}\notag\\
&\leq \frac{\Kcal^*}{k+1}\Eset\left[\|\btheta_{\Kcal^*}-\theta^*\|^2\right] + \frac{\Psi_{4}\alpha_{0}^2}{k+1} \sum_{t=\Kcal^*}^{k}\frac{2C\log(\frac{t+1}{\alpha_{0}})}{t+1}\notag\\
&\leq \frac{\Kcal^*}{k+1}\Eset\left[\|\btheta_{\Kcal^*}-\theta^*\|^2\right] + \frac{\Psi_{4}\alpha_{0}C\log^2(\frac{k+1}{\alpha_{0}})}{k+1}, \label{thm_sublinear:Eq2}
\end{align}
where the third inequality we use 
\begin{align*}
\frac{C\log(\frac{t+1}{\alpha_{0}})}{(t+1-C\log(\frac{t+1}{\alpha_{0}}))} \leq \frac{2C\log(\frac{t+1}{\alpha_{0}})}{t+1},    
\end{align*}
and the last inequality is due to the integral test
\begin{align*}
\sum_{t=\Kcal^*}^{k}\frac{\alpha_{0}\log(\frac{t+1}{\alpha_{0}})}{t+1}\leq \frac{1}{2}\log^2\left(\frac{k+1}{\alpha_{0}}\right).  
\end{align*}
Next, by Eq.\ \eqref{lem_consensus_tv:Ineq} we have for all $k\geq\Kcal^*$
\begin{align*}
\|\Theta_{k}-\1\btheta_{k}^{T}\|^2 &\leq 3\delta^{2k-2\Kcal^*}\|\Theta_{\Kcal^*}\|^2 + \frac{3N R^2\alpha_{0}^2\delta^{k}}{(1-\gamma\lambda)^2(1-\delta)^2} + \frac{3NR^2\alpha_{k/2}^2}{(1-\gamma\lambda)^2(1-\delta)^2},
\end{align*}
which by using Eq.\ \eqref{thm_sublinear:Eq2} gives Eq.\ \eqref{thm_sublinear:Ineq}, i.e., for all $k\geq\Kcal^*$
\begin{align*}
&\frac{1}{N}\sum_{v\in\Vcal}\Eset[\|\theta_{k}^{v}-\theta^*\|^2]  \leq \frac{2}{N}\sum_{v\in\Vcal}\Big(\Eset[\|\theta_{k}^{v}-\btheta_{k}\|^2]  + \Eset[\|\btheta_{k}-\theta^*\|^2] \Big)\notag\\ 
&\leq \frac{6\Eset[\|\Theta_{\Kcal^*}\|^2]}{N}\delta^{2k-2\Kcal^*} + \frac{6 R^2\alpha_{0}^2}{(1-\gamma\lambda)^2(1-\delta)^2}\delta^{k} + \frac{6R^2}{(1-\gamma\lambda)^2(1-\delta)^2}\frac{1}{(k+1)^2}\notag\\
&\qquad + \frac{2\Kcal^*}{k+1}\Eset\left[\|\btheta_{\Kcal^*}-\theta^*\|^2\right] + \frac{2\Psi_{4}C\alpha_{0}\log^2(\frac{k+1}{\alpha_{0}})}{k+1}\cdot
\end{align*}

\end{proof}

\section{Conclusion and Discussion}
In this paper, we consider a distributed consensus-based variant of the popular {\sf TD}$(\lambda)$ algorithm for estimating the value function of a given stationary policy. Our main contribution is to provide a finite-time analysis for the performance of distributed {\sf TD}$(\lambda)$, which has not been addressed in the existing literature of {\sf MARL}. Our results theoretically address some numerical observations of {\sf TD}$(\lambda)$ , that is, $\lambda=1$ gives the best approximation of the function values while $\lambda = 0$ leads to better performance when there is large variance in the algorithm.  One interesting question left from this work is the finite-time analysis when the policy is not stationary, e.g., distributed actor-critic methods. We believe that this paper establishes fundamental results that enable one to tackle these problems, which we leave for our future research.  

\bibliography{refs}

\begin{thebibliography}{10}
\providecommand{\url}[1]{#1}
\csname url@samestyle\endcsname
\providecommand{\newblock}{\relax}
\providecommand{\bibinfo}[2]{#2}
\providecommand{\BIBentrySTDinterwordspacing}{\spaceskip=0pt\relax}
\providecommand{\BIBentryALTinterwordstretchfactor}{4}
\providecommand{\BIBentryALTinterwordspacing}{\spaceskip=\fontdimen2\font plus
\BIBentryALTinterwordstretchfactor\fontdimen3\font minus
  \fontdimen4\font\relax}
\providecommand{\BIBforeignlanguage}[2]{{%
\expandafter\ifx\csname l@#1\endcsname\relax
\typeout{** WARNING: IEEEtran.bst: No hyphenation pattern has been}%
\typeout{** loaded for the language `#1'. Using the pattern for}%
\typeout{** the default language instead.}%
\else
\language=\csname l@#1\endcsname
\fi
#2}}
\providecommand{\BIBdecl}{\relax}
\BIBdecl

\bibitem{Sutton1998_book}
R.~S. Sutton and A.~G. Barto, \emph{Reinforcement Learning: An Introduction},
  1st~ed.\hskip 1em plus 0.5em minus 0.4em\relax MIT Press, 1998.

\bibitem{Bertsekas1999_book}
D.~Bertsekas and J.~Tsitsiklis, \emph{Neuro-Dynamic Programming}, 2nd~ed.\hskip
  1em plus 0.5em minus 0.4em\relax Athena Scientific, Belmont, MA, 1999.

\bibitem{Szepesvari2010_book}
C.~Szepesvari, \emph{Algorithms for Reinforcement Learning}.\hskip 1em plus
  0.5em minus 0.4em\relax Morgan and Claypool Publishers, 2010.

\bibitem{Chen2015_DeepDrive}
C.~Chen, A.~Seff, A.~Kornhauser, and J.~Xiao, ``Deepdriving: Learning
  affordance for direct perception in autonomous driving,'' in
  \emph{Proceedings of the 2015 IEEE International Conference on Computer
  Vision (ICCV)}, ser. ICCV '15, Washington, DC, USA, 2015, pp. 2722--2730.

\bibitem{Gu2017_DeepRL}
S.~Gu, E.~Holly, T.~Lillicrap, and S.~Levine, ``Deep reinforcement learning for
  robotic manipulation with asynchronous off-policy updates,'' \emph{2017 IEEE
  International Conference on Robotics and Automation (ICRA)}, pp. 3389--3396,
  2017.

\bibitem{Abbeel2007_NIPS}
P.~Abbeel, A.~Coates, M.~Quigley, and A.~Ng, ``An application of reinforcement
  learning to aerobatic helicopter flight,'' in \emph{Advances in Neural
  Information Processing Systems 19}, 2007, pp. 1--8.

\bibitem{Silver2016_MasteringTG}
D.~Silver, A.~Huang, C.~J. Maddison, A.~Guez, L.~Sifre, G.~van~den Driessche,
  J.~Schrittwieser, I.~Antonoglou, V.~Panneershelvam, M.~Lanctot, S.~Dieleman,
  D.~Grewe, J.~Nham, N.~Kalchbrenner, I.~Sutskever, T.~P. Lillicrap, M.~Leach,
  K.~Kavukcuoglu, T.~Graepel, and D.~Hassabis, ``Mastering the game of go with
  deep neural networks and tree search,'' \emph{Nature}, vol. 529, pp.
  484--489, 2016.

\bibitem{Kar2013_QDLearning}
S.~Kar, J.~M.~F. Moura, and H.~V. Poor, ``Qd-learning: A collaborative
  distributed strategy for multi-agent reinforcement learning through consensus
  + innovations,'' \emph{IEEE Trans. Signal Processing}, vol.~61, pp.
  1848--1862, 2013.

\bibitem{Sutton1988_TD}
R.~S. Sutton, ``Learning to predict by the methods of temporal differences,''
  \emph{Machine Learning}, vol.~3, no.~1, pp. 9--44, Aug 1988.

\bibitem{Dayan1992_TD}
P.~Dayan, ``The convergence of {TD}($\lambda$) for general $\lambda$,'' pp.
  341--362, 1992.

\bibitem{Gurvits1994_TD}
L.~Gurvits, L.~J. Lin, and S.~J. Hanson, ``Incremental learning of evaluation
  functions for absorbing {M}arkov chains: New methods and theorems,'' 1994.

\bibitem{Pineda1997_TD}
F.~J. Pineda, ``Mean-field theory for batched {TD}($\lambda$),'' \emph{Neural
  Computation}, vol.~9, pp. 1403--1419, 1997.

\bibitem{Tsitsiklis1997_TD}
J.~N. Tsitsiklis and B.~V. Roy, ``An analysis of temporal-difference learning
  with function approximation,'' \emph{IEEE Transactions on Automatic Control},
  vol.~42, no.~5, pp. 674--690, 1997.

\bibitem{Tsitsiklis1999_TDAverage}
------, ``Average cost temporal-difference learning,'' \emph{Automatica},
  vol.~35, pp. 1799--1808, 1999.

\bibitem{Tesauro1995_TD0}
G.~Tesauro, ``Temporal difference learning and td-gammon,'' \emph{Commun. ACM},
  vol.~38, no.~3, pp. 58--68, 1995.

\bibitem{Mnih2015_Nature}
V.~Mnih, K.~Kavukcuoglu, D.~Silver, A.~Rusu, J.~Veness, M.~G.~Bellemare,
  A.~Graves, M.~Riedmiller, A.~K.~Fidjeland, G.~Ostrovski, S.~Petersen,
  C.~Beattie, A.~Sadik, I.~Antonoglou, H.~King, D.~Kumaran, D.~Wierstra,
  S.~Legg, and D.~Hassabis, ``Human-level control through deep reinforcement
  learning,'' \emph{Nature}, vol. 518, pp. 529--33, 02 2015.

\bibitem{CortesMKB2004}
J.~Cortes, S.~Martinez, T.~Karatas, and F.~Bullo, ``Coverage control for mobile
  sensing networks,'' \emph{IEEE Transactions on Robotics and Automation},
  vol.~20, no.~2, pp. 243--255, 2004.

\bibitem{Ogren2004_TAC}
P.~Ogren, E.~Fiorelli, and N.~E. Leonard, ``Cooperative control of mobile
  sensor networks:adaptive gradient climbing in a distributed environment,''
  \emph{IEEE Transactions on Automatic Control}, vol.~49, no.~8, pp.
  1292--1302, 2004.

\bibitem{Bennis2013_CellNetworks}
M.~Bennis, S.~M. Perlaza, P.~Blasco, Z.~Han, and H.~V. Poor,
  ``Self-organization in small cell networks: A reinforcement learning
  approach,'' \emph{IEEE Transactions on Wireless Communications}, vol.~12,
  no.~7, pp. 3202--3212, 2013.

\bibitem{DoanMR2019_DTD(0)}
T.~T. Doan, S.~T. Maguluri, and J.~Romberg, ``Finite-time analysis of
  distributed {TD}(0) with linear function approximation on multi-agent
  reinforcement learning,'' in \emph{Proceedings of the 36th International
  Conference on Machine Learning}, ser. Proceedings of Machine Learning
  Research, vol.~97, Long Beach, California, USA, 2019, pp. 1626--1635.

\bibitem{borkar2008}
V.~Borkar, \emph{Stochastic Approximation: A Dynamical Systems
  Viewpoint}.\hskip 1em plus 0.5em minus 0.4em\relax Cambridge University
  Press, 2008.

\bibitem{Borkar2000_ODE}
V.~Borkar and S.~Meyn, ``The o.d.e. method for convergence of stochastic
  approximation and reinforcement learning,'' \emph{SIAM Journal on Control and
  Optimization}, vol.~38, no.~2, pp. 447--469, 2000.

\bibitem{Thoppe2018_ConcentrationBound}
G.~Thoppe and V.~S. Borkar, ``{A Concentration Bound for Stochastic
  Approximation via Alekseev's Formula},'' Available at:
  \url{https://arxiv.org/abs/1506.08657}.

\bibitem{Dalal2018_FiniteTD0}
G.~Dalal, B.~Sz{\"o}r{\'e}nyi, G.~Thoppe, and S.~Mannor, ``Finite sample
  analyses for {TD}(0) with function approximation,'' in \emph{AAAI}, 2018.

\bibitem{Bhandari2018_FiniteTD}
J.~Bhandari, D.~Russo, and R.~Singal, ``A finite time analysis of temporal
  difference learning with linear function approximation,'' in \emph{COLT},
  2018.

\bibitem{LakshminarayananS2018}
C.~Lakshminarayanan and C.~Szepesvari, ``Linear stochastic approximation: How
  far does constant step-size and iterate averaging go?'' in \emph{Proceedings
  of the Twenty-First International Conference on Artificial Intelligence and
  Statistics}, 2018, pp. 1347--1355.

\bibitem{LeeH2019}
D.~Lee and N.~He, ``Target-based temporal-difference learning,'' in
  \emph{Proceedings of the 36th International Conference on Machine Learning},
  2019, pp. 3713--3722.

\bibitem{SrikantY2019_FiniteTD}
R.~Srikant and L.~Ying, ``Finite-time error bounds for linear stochastic
  approximation and {TD} learning,'' in \emph{COLT}, 2019.

\bibitem{HuS2019}
B.~Hu and U.~Syed, ``Characterizing the exact behaviors of temporal difference
  learning algorithms using markov jump linear system theory,'' in
  \emph{Advances in Neural Information Processing Systems 32}, 2019.

\bibitem{ChenZDMC2019}
C.~Z. Chen, S.~Zhang, T.~T. Doan, S.~T. Maguluri, and J.-P. Clarke,
  ``{Performance of Q-learning with Linear Function Approximation: Stability
  and Finite-Time Analysis},'' available at:
  \url{https://arxiv.org/abs/1905.11425}, 2019.

\bibitem{Mathkar2017_GossipRL}
A.~Mathkar and V.~S. Borkar, ``Distributed reinforcement learning via gossip,''
  \emph{IEEE Transactions on Automatic Control}, vol.~62, no.~3, pp.
  1465--1470, 2017.

\bibitem{zhang2018_ICML}
K.~Zhang, Z.~Yang, H.~Liu, T.~Zhang, and T.~Basar, ``Fully decentralized
  multi-agent reinforcement learning with networked agents,'' in
  \emph{Proceedings of the 35th International Conference on Machine Learning},
  ser. Proceedings of Machine Learning Research, vol.~80, 2018, pp. 5872--5881.

\bibitem{Sutton2008_GTD}
R.~S. Sutton, H.~R. Maei, and C.~Szepesv\'{a}ri, ``A convergent o(n)
  temporal-difference algorithm for off-policy learning with linear function
  approximation,'' in \emph{Advances in Neural Information Processing Systems
  21}, 2009, pp. 1609--1616.

\bibitem{Sutton2009_FGTD}
R.~S. Sutton, H.~R. Maei, D.~Precup, S.~Bhatnagar, D.~Silver,
  C.~Szepesv\'{a}ri, and E.~Wiewiora, ``Fast gradient-descent methods for
  temporal-difference learning with linear function approximation,'' in
  \emph{Proceedings of the 26th Annual International Conference on Machine
  Learning}, ser. ICML '09, 2009, pp. 993--1000.

\bibitem{Liu2015_FiniteGTD}
B.~Liu, J.~Liu, M.~Ghavamzadeh, S.~Mahadevan, and M.~Petrik, ``Finite-sample
  analysis of proximal gradient td algorithms,'' in \emph{Proceedings of the
  Thirty-First Conference on Uncertainty in Artificial Intelligence}, 2015, pp.
  504--513.

\bibitem{Macua_2015_TAC}
S.~V. Macua, J.~Chen, S.~Zazo, and A.~H. Sayed, ``Distributed policy evaluation
  under multiple behavior strategies,'' \emph{IEEE Transactions on Automatic
  Control}, vol.~60, no.~5, pp. 1260--1274, 2015.

\bibitem{Stankovic2016_ACC}
M.~S. Stankovi\'{c} and S.~S. Stankovi\'{c}, ``Multi-agent temporal-difference
  learning with linear function approximation: Weak convergence under
  time-varying network topologies,'' in \emph{2016 American Control Conference
  (ACC)}, 2016, pp. 167--172.

\bibitem{Wai2018_NIPS}
H.-T. Wai, Z.~Yang, Z.~Wang, and M.~Hong, ``Multi-agent reinforcement learning
  via double averaging primal-dual optimization,'' in \emph{Annual Conference
  on Neural Information Processing Systems}, 2018, pp. 9672--9683.

\bibitem{Bradtke1996_LSTD}
S.~Bradtke and A.~Barto, ``Linear least-squares algorithms for temporal
  difference learning,'' \emph{Machine Learning}, vol.~22, no.~1, pp. 33--57,
  Mar 1996.

\bibitem{Tu2018_ICML}
S.~Tu and B.~Recht, ``Least-squares temporal difference learning for the linear
  quadratic regulator,'' in \emph{Proceedings of the 35th International
  Conference on Machine Learning, {ICML} 2018, Stockholmsm{\"{a}}ssan,
  Stockholm, Sweden, July 10-15, 2018}, 2018, pp. 5012--5021.

\bibitem{Nedic2003_LSPE}
A.~Nedi{\'{c}} and D.~P. Bertsekas, ``Least squares policy evaluation
  algorithms with linear function approximation,'' \emph{Discrete Event Dynamic
  Systems}, vol.~13, no.~1, pp. 79--110, Jan 2003.

\bibitem{Yu2010_ICML}
H.~Yu, ``Convergence of least squares temporal difference methods under general
  conditions,'' in \emph{Proceedings of the 27th International Conference on
  International Conference on Machine Learning}, ser. ICML'10, 2010.

\bibitem{Yu2009_TAC}
H.~Yu and D.~P. Bertsekas, ``Convergence results for some temporal difference
  methods based on least squares,'' \emph{IEEE Transactions on Automatic
  Control}, vol.~54, no.~7, pp. 1515--1531, 2009.

\bibitem{DoanMR2018b}
T.~T. Doan, S.~T. Maguluri, and J.~Romberg, ``Distributed stochastic
  approximation for solving network optimization problems under random
  quantization,'' Available at: \url{https://arxiv.org/abs/1810.11568}, 2018.

\bibitem{HJ1985}
R.~Horn and C.~Johnson, \emph{Matrix Analysis}.\hskip 1em plus 0.5em minus
  0.4em\relax Cambridge, U.K.: Cambridge Univ. Press, 1985.

\bibitem{Bremaud2000}
B.~Pierre, \emph{Markov Chains: Gibbs Fields, Monte Carlo Simulation, and
  Queues}.\hskip 1em plus 0.5em minus 0.4em\relax Springer Science \& Business
  Media, 01 2013, vol.~31.

\end{thebibliography}
\bibliographystyle{IEEEtran}
\appendix

\section{Proof of Lemma \ref{lem:consensus_const}}
For convenience, we consider the following notation
\begin{align*}
\Qbf &= \Ibf - \frac{1}{N}\1\1^T\in\Rset^{N\times N}\\ 
\Ybf_{k} &= \Theta_{k} - \1\btheta_{k}^{T} = \left(\Ibf - \frac{1}{N}\1\1^T\right)\Theta_{k} = \Qbf\Theta_{k}.
\end{align*}
By Assumption \ref{assump:doub_stoch}, $\Wbf$ is doubly stochastic, which yields
\begin{align*}
\Wbf\Theta_{k} - \1\btheta_{k}^{T} =  \Wbf(\Theta_{k} - \1\btheta_{k}^{T})  = \Wbf\Qbf\Theta_{k}. 
\end{align*}
Thus, using Eqs.\ \eqref{sec_analysis:theta_bar} and \eqref{sec_proofs:Theta} we have
\begin{align*}
\Ybf_{k+1} &= \Qbf\Theta_{k+1} = \Theta_{k+1} - \1\btheta_{k+1}^{T}\notag\\ 
&= \Wbf\Theta_{k} + \alpha_{k}\Theta_{k}\Abf^{T}(X_{k}) + \alpha_{k}\Bbf(X_{k}) - \1\left(\btheta_{k} + \alpha_{k}\Abf(X_{k})\btheta_{k} + \alpha_{k}\bbar(X_{k})\right)^{T}\notag\\ 
&= \Wbf\Theta_{k} - \1\btheta_{k}^T + \alpha_k\big(\Theta_k - \1\btheta_{k}^T\big)\Abf^T(X_k) + \alpha_k\big(\Bbf(X_k) - \1\bbar(X_k)^T\big)\\
&= \Wbf\Qbf\Theta_{k} + \alpha_{k}\Qbf\Theta_{k}\Abf^{T}(X_{k}) + \alpha_{k}\Qbf\Bbf(X_{k})\\
&= \Wbf\Ybf_{k} + \alpha_{k}\Ybf_{k}\Abf^{T}(X_{k}) + \alpha_{k}\Qbf\Bbf(X_{k}),
\end{align*}
which by applying the $2$-norm on both sides and using the triangle inequality yields
\begin{align}
\|\Ybf_{k+1}\| &\leq \|\Wbf\Ybf_{k}\| + \alpha_{k}\|\Ybf_{k}\|\|\Abf(X_{k})\| + \alpha_{k}\|\Qbf\Bbf(X_{k})\|.\label{lem_consensus:Eq1a} 
\end{align}
First, since $\Wbf$ satisfies Assumption \ref{assump:doub_stoch},  by the Courant-Fisher theorem \cite{HJ1985} we have
\begin{align*}
\left\|\Wbf\Ybf_{k}\right\| = \left\|\Wbf\left(\Ibf-\frac{1}{n}\1\1^{T}\right)\Theta_{k}\right\| \leq \sigma_2\left\|\left(\Ibf-\frac{1}{n}\1\1^{T}\right)\Theta_{k}\right\| = \sigma_2\left\|\Ybf_{k}\right\|.
\end{align*}
Second, using the definition of $\Bbf(X_k)$ in \eqref{notation:Theta_B} and the relation in Eq.\ \eqref{sec_analysis:Akbk_bound} we obtain
\begin{align*}
\|\Abf(X_{k})\|\leq \frac{1+\gamma}{1-\gamma\lambda},\qquad \|\Qbf\Bbf(X_{k})\|\leq \frac{\sqrt{N}R}{1-\gamma\lambda}\cdot    
\end{align*}
Substituting the previous two relations into Eq.\ \eqref{lem_consensus:Eq1a} we have
\begin{align}
\|\Ybf_{k+1}\| &\leq \left(\sigma_2+\frac{1+\gamma}{1-\gamma\lambda}\alpha_k\right)\|\Ybf_{k}\|  + \frac{\sqrt{N}R}{1-\gamma\lambda}\alpha_{k}.\label{lem_consensus:Eq1} 
\end{align}
We now consider the following two cases for different choices of the step sizes $\alpha_k$. 
\begin{enumerate}[leftmargin = 5mm]
\item Let $\alpha_k = \alpha$ for some constant $\alpha$ satisfying 
Eq.\ \eqref{consensus_const:alpha}, which gives
\begin{align*}
\delta = \sigma_{2} + \frac{1+\gamma}{1-\gamma\lambda}\alpha \in (0,1).     
\end{align*}
Thus, using this relation into Eq.\ \eqref{lem_consensus:Eq1} gives Eq.\ \eqref{lem_consensus_const:Ineq}, i.e.,
\begin{align*}
\|\Ybf_{k+1}\| &\leq \left(\sigma_2 + \frac{1+\gamma}{1-\gamma\lambda}\alpha\right)\|\Ybf_{k}\| + \frac{\sqrt{N}R}{1-\gamma\lambda}\alpha =  \delta\|\Ybf_{k}\| +  \frac{\sqrt{N}C}{1-\gamma\lambda}\alpha\notag\\
&\leq \delta^{k+1}\|\Ybf_{0}\| + \frac{\sqrt{N}R\alpha}{1-\gamma\lambda}\sum_{u=0}^{k}\delta^{k-u}\leq \delta^{k+1}\|\Ybf_{0}\| + \frac{\sqrt{N}R\alpha}{(1-\gamma\lambda)(1-\delta)}\notag\\
&\leq \delta^{k+1}\|\Theta_{0}\| + \frac{\sqrt{N}R\alpha}{(1-\gamma\lambda)(1-\delta)}, 
\end{align*}
where in the last inequality we use $\|\Ybf_{0}\| = \|\Qbf\Theta_{0}\|\leq \|\Qbf\|\|\Theta_{0}\| \leq \|\Theta_{0}\|.$     
\item Let $\alpha_{k}$ be a sequence of step sizes satisfying $\alpha_{k} = \alpha_{0}/(k+1)$ for some constant $\alpha_0$. In addition, we have
\begin{align*}
\sigma_{2} + \frac{1+\gamma}{1-\gamma\lambda}\alpha_{k} \leq \sigma_{2} + \frac{1+\gamma}{1-\gamma\lambda}\alpha =  \delta \in(0,1),\qquad \forall k\geq \Kcal^*.    
\end{align*}
Next, using Eq.\ \eqref{lem_consensus:Eq1} above, we have for all $k\geq \Kcal^*$ that 
\begin{align}
\|\Ybf_{k+1}\| &\leq \left(\sigma_2 + \frac{1+\gamma}{1-\gamma\lambda}\alpha_{k}\right)\|\Ybf_{k}\| + \frac{\sqrt{N}R}{1-\gamma\lambda}\alpha_{k}\leq  \delta\|\Ybf_{k}\| +  \frac{\sqrt{N}R}{1-\gamma\lambda}\alpha_{k}\notag\\
&\leq \delta^{k+1-\Kcal^*}\|\Ybf_{\Kcal^*}\| + \frac{\sqrt{N}R}{1-\gamma\lambda}\sum_{u=\Kcal^*}^{k}\alpha_{k}\delta^{k-u}\notag\\
&\leq \delta^{k+1-\Kcal^*}\|\Theta_{\Kcal^*}\| + \frac{\sqrt{N}R}{1-\gamma\lambda}\left(\sum_{u=0}^{\left\lfloor  \frac{k}{2} \right\rfloor}\alpha_{k}\delta^{k-u} + \sum_{u=\left\lceil  \frac{k}{2} \right\rceil}^{k}\alpha_{k}\delta^{k-u}\right)\notag\\
&\leq \delta^{k+1-\Kcal^*}\|\Theta_{\Kcal^*}\| + \frac{\sqrt{N}R\alpha_{0}}{(1-\gamma\lambda)(1-\delta)}\delta^{(k+1)/2} + \frac{\sqrt{N}R}{(1-\gamma\lambda)(1-\delta)}\alpha_{k/2}.\label{lem_consensus:Eq2} 
\end{align}
\end{enumerate}

\section{Constant step sizes}\label{apx_const}

In this section, we provide a full analysis of Lemma \ref{lem_const:bias}. To to that, we first study a few upper bounds of the quantity $\|\btheta_{t}-\btheta_{t-\tau(\alpha)}\|^2$, which is the change of $\btheta_{t}$ in any $\tau(\alpha)$ mixing time interval. Our main observation states that under some proper choice of the step size $\alpha$, such difference should not be too large, as stated in the following lemma. In addition, these bounds will play a crucial role for our result given later.   

\begin{lemma}\label{lem_const:xbar_bound}
Suppose that Assumptions \ref{assump:doub_stoch}--\ref{assump:features} hold. Let the sequence $\{\theta_{k}^{v}\}$, for all $v\in\Vcal$, be generated by Algorithm \ref{alg:DTD}. In addition, let $\alpha_{k}=\alpha$ and $\tau(\alpha)$ be the corresponding mixing time in \eqref{def_mixing:tau} satisfying
\begin{align}
0 \leq \alpha\tau(\alpha) \leq \frac{(1-\gamma\lambda)\log(2)}{1+\gamma}, \label{const_stepsize:alpha}
\end{align}
Then for all $k\geq\tau(\alpha)$ we have
\begin{align}
&a)\qquad  \|\btheta_{k}-\btheta_{k-\tau(\alpha)}\|\leq \frac{2(1+\gamma)\tau(\alpha)}{1-\gamma\lambda}\alpha\|\btheta_{k-\tau(\alpha)}\|  + \frac{2R\alpha\tau(\alpha)}{1-\gamma\lambda}\cdot\label{lem_xbar_bound_const:Ineq1}\\
&b)\qquad \|\btheta_{k}-\btheta_{k-\tau(\alpha)}\|\leq \frac{6(1+\gamma)\alpha\tau(\alpha)}{1-\gamma\lambda}\|\btheta_{k}\|  + \frac{6R\alpha\tau(\alpha)}{1-\gamma\lambda}.\label{lem_xbar_bound_const:Ineq2}\\
&c)\qquad \|\btheta_{k}-\btheta_{k-\tau(\alpha)}\|^2 \leq \frac{72(1+\gamma)^2\alpha^2\tau^2(\alpha)}{(1-\gamma\lambda)^2}\|\btheta_{k}\|^2  + \frac{72R^2\alpha^2\tau^2(\alpha)}{(1-\gamma\lambda)^2}\leq 8\|\btheta_{k}\|^2  + 8R^2.\label{lem_xbar_bound_const:Ineq3}
\end{align}
\end{lemma}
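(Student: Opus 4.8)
The plan is to derive all three bounds from a single telescoping identity for the averaged iterate, combined with a discrete Gr\"onwall argument that exploits the smallness of the step size over one mixing-time window. Writing the averaged recursion \eqref{sec_analysis:theta_bar} with $\alpha_k=\alpha$ as
$$\btheta_k - \btheta_{k-\tau(\alpha)} = \alpha\sum_{t=k-\tau(\alpha)}^{k-1}\big(\Abf(X_t)\btheta_t + \bbar(X_t)\big),$$
I would bound each summand with the uniform estimates $\|\Abf(X_t)\|\le\frac{1+\gamma}{1-\gamma\lambda}$ and $\|\bbar(X_t)\|\le\frac{R}{1-\gamma\lambda}$ from \eqref{sec_analysis:Akbk_bound}. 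This reduces everything to controlling the intermediate norms $\|\btheta_t\|$ for $t$ in the window $[k-\tau(\alpha),k]$, and the only question is whether we measure them relative to the left or the right endpoint.

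For part (a) I would express the intermediate norms in terms of the left endpoint. The same recursion gives $\|\btheta_{t+1}\|\le\big(1+\tfrac{(1+\gamma)\alpha}{1-\gamma\lambda}\big)\|\btheta_t\| + \tfrac{R\alpha}{1-\gamma\lambda}$; iterating this and using $1+x\le e^x$ together with the step-size condition \eqref{const_stepsize:alpha}, which guarantees $\exp\{\tfrac{(1+\gamma)\alpha\tau(\alpha)}{1-\gamma\lambda}\}\le 2$, yields $\|\btheta_t\|\le 2\|\btheta_{k-\tau(\alpha)}\| + \tfrac{R}{1+\gamma}$ uniformly over the window. Substituting this into the telescoped sum and collecting the reward terms (the identity $\tfrac{1+\gamma}{1-\gamma\lambda}\cdot\tfrac{R}{1+\gamma}=\tfrac{R}{1-\gamma\lambda}$ merges the two contributions) reproduces \eqref{lem_xbar_bound_const:Ineq1} exactly.

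For part (b) I would instead control the intermediate norms relative to the right endpoint $\btheta_k$. Rearranging the recursion backward gives $\big(1-\tfrac{(1+\gamma)\alpha}{1-\gamma\lambda}\big)\|\btheta_t\|\le\|\btheta_{t+1}\| + \tfrac{R\alpha}{1-\gamma\lambda}$; iterating and controlling the factor $\big(1-\tfrac{(1+\gamma)\alpha}{1-\gamma\lambda}\big)^{-\tau(\alpha)}$ via Bernoulli's inequality and \eqref{const_stepsize:alpha} produces a reverse estimate of the form $\|\btheta_{k-\tau(\alpha)}\|\le 3\|\btheta_k\| + \mathrm{const}$. Feeding this into the right-hand side of the part-(a) bound converts the $\btheta_{k-\tau(\alpha)}$-dependence into $\btheta_k$-dependence, with the leading constant $2$ becoming $6$, which is \eqref{lem_xbar_bound_const:Ineq2}. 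Part (c) is then immediate: squaring \eqref{lem_xbar_bound_const:Ineq2} and applying $(x+y)^2\le 2x^2+2y^2$ gives the factor $2\times 6^2=72$ and the first inequality of \eqref{lem_xbar_bound_const:Ineq3}, after which the step-size bound $\tfrac{(1+\gamma)\alpha\tau(\alpha)}{1-\gamma\lambda}\le\log 2$ replaces the remaining $\alpha^2\tau^2(\alpha)$ factors by absolute constants to give the second inequality.

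The main obstacle I anticipate is the window bookkeeping: showing that the iterate drifts by at most a constant factor over $\tau(\alpha)$ steps in \emph{both} directions (forward for (a), backward for (b)) while tracking constants carefully enough that the growth factor stays below the thresholds the step-size condition is tuned to deliver. Everything hinges on \eqref{const_stepsize:alpha} keeping $\frac{(1+\gamma)\alpha\tau(\alpha)}{1-\gamma\lambda}\le\log 2$, which is precisely what makes the geometric growth of $\|\btheta_t\|$ over a single mixing window harmless; handling the backward iteration in (b) without an extra singular factor is the most delicate point, and is where I would spend the most care.
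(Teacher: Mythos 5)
Your proposal is correct and follows essentially the same route as the paper: telescope the averaged recursion \eqref{sec_analysis:theta_bar} over the mixing window, bound the intermediate norms via the forward iteration $\|\btheta_{t+1}\|\le\bigl(1+\tfrac{(1+\gamma)\alpha}{1-\gamma\lambda}\bigr)\|\btheta_{t}\|+\tfrac{R\alpha}{1-\gamma\lambda}$ together with $1+x\le e^{x}$ and \eqref{const_stepsize:alpha}, and obtain (c) by squaring (b) with $(x+y)^2\le 2x^2+2y^2$. The only (minor) deviation is in part (b): the paper converts the $\btheta_{k-\tau(\alpha)}$-dependence into $\btheta_{k}$-dependence by substituting $\|\btheta_{k-\tau(\alpha)}\|\le\|\btheta_{k}-\btheta_{k-\tau(\alpha)}\|+\|\btheta_{k}\|$ into the right-hand side of (a) and absorbing the resulting self-referential term (whose coefficient is at most $\tfrac{2}{3}$ under \eqref{const_stepsize:alpha}) into the left-hand side, rather than running your backward iteration; both devices yield the factor $6$, and yours only additionally requires $1-\tfrac{(1+\gamma)\alpha}{1-\gamma\lambda}>0$, which the step-size condition guarantees.
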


\begin{proof}
Using Eq.\ \eqref{sec_analysis:theta_bar} with $\alpha_{k} = \alpha$ we have
\begin{align*}
\btheta_{k+1} 
&= \btheta_{k} + \alpha\Abf(X_{k})\btheta_{k} + \alpha\bbar(X_{k}),
\end{align*}
which when taking the $2$-norm on both sides and using Eq.\ \eqref{sec_analysis:Akbk_bound} yields
\begin{align}
\|\btheta_{k+1}\| &\leq \|\btheta_{k}\| + \frac{1+\gamma}{1-\gamma\lambda}\alpha\|\btheta_{k}\| + \frac{R\alpha}{1-\gamma\lambda} = \left(1 + \frac{1+\gamma}{1-\gamma\lambda}\alpha\right)\|\btheta_{k}\| + \frac{R\alpha}{1-\gamma\lambda}.\label{lem_xbar_bound_const:Eq1a}
\end{align}
First, we consider when $k\geq \tau(\alpha)$, where $\tau(\alpha)$ is the mixing time associated with $\alpha$ defined in \eqref{def_mixing:tau}. Indeed, using Eq.\ \eqref{lem_xbar_bound_const:Eq1a} we have for all $u\in[k-\tau(\alpha),k]$ and $k\geq \tau(\alpha)$ 
\begin{align}
\|\btheta_{u}\| &\leq \left(1 + \frac{1+\gamma}{1-\gamma\lambda}\alpha\right)^{u-k+\tau_{\alpha}}\|\btheta_{k-\tau(\alpha)}\| + \frac{R\alpha}{1-\gamma\lambda}\sum_{\ell=k-\tau(\alpha)}^{u-1}\left(1 + \frac{1+\gamma}{1-\gamma\lambda}\alpha\right)^{u-1-\ell}\cdot\label{lem_xbar_bound_const:Eq1b}
\end{align}
Recall from Eq. \eqref{const_stepsize:alpha} that $\alpha$ satisfies 
\begin{align}
0 \leq \alpha\tau(\alpha) \leq \frac{(1-\gamma\lambda)\log(2)}{1+\gamma}, 
\end{align}
which by using the relation $1 + x \leq e^{x}$ for all $x\geq 0$ we have for all $u\in[k-\tau(\alpha),k]$
\begin{align*}
\left(1 + \frac{1+\gamma}{1-\gamma\lambda}\alpha\right)^{u-k+\tau(\alpha)} &\leq  \left(1 + \frac{1+\gamma}{1-\gamma\lambda}\alpha\right)^{\tau(\alpha)} \leq \exp\left\{\frac{1+\gamma}{1-\gamma\lambda}\alpha\tau(\alpha)\right\} \leq 2.
\end{align*}
Using the preceding relation into Eq.\ \eqref{lem_xbar_bound_const:Eq1b} we have for all $u\in[k-\tau(\alpha),k]$ for $u\geq\tau(\alpha)$
\begin{align}
\|\btheta_{u}\| &\leq 2\|\btheta_{k-\tau(\alpha)}\| + \frac{R\alpha}{1-\gamma\lambda}\sum_{k-\tau(\alpha)}^{u-1}\left(1+\frac{1+\gamma}{1-\gamma\lambda}\alpha\right)^{u-k+\tau(\alpha)} \leq 2\|\btheta_{k- \tau(\alpha)}\| + \frac{2R\tau(\alpha)\alpha}{1-\gamma\lambda}\cdot\label{lem_xbar_bound_const:Eq1}
\end{align}
Thus, using Eq.\ \eqref{lem_xbar_bound_const:Eq1} we obtain Eq.\ \eqref{lem_xbar_bound_const:Ineq1} for all $k\geq\tau(\alpha)$
, i.e.,
\begin{align*}
\|\btheta_{k}-\btheta_{k-\tau(\alpha)}\|&\leq \sum_{u=k-\tau(\alpha)}^{k-1}\|\btheta_{u+1}-\btheta_{u}\| \stackrel{\eqref{lem_xbar_bound_const:Eq1a}}{\leq} \frac{1+\gamma}{1-\gamma\lambda}\alpha\sum_{u=k-\tau(\alpha)}^{k-1}\|\btheta_{u}\| + \frac{R\alpha\tau(\alpha)}{1-\gamma\lambda}\notag\\
&\leq \frac{1+\gamma}{1-\gamma\lambda}\alpha\sum_{u = k-\tau(\alpha)}^{k-1}\left(2\|\btheta_{k - \tau(\alpha)}\| + \frac{2R\tau(\alpha)\alpha}{1-\gamma\lambda}\right) + \frac{R\alpha\tau(\alpha)}{1-\gamma\lambda}\notag\\
&= \frac{2(1+\gamma)\alpha\tau(\alpha)}{1-\gamma\lambda}\|\btheta_{k-\tau(\alpha)}\| + \frac{(1+\gamma)\alpha\tau(\alpha)}{1-\gamma\lambda}\frac{2R\alpha\tau(\alpha)}{1-\gamma\lambda} + \frac{R\alpha\tau(\alpha)}{1-\gamma\lambda}\notag\\
&\leq \frac{2(1+\gamma)\tau(\alpha)}{1-\gamma\lambda}\alpha\|\btheta_{k-\tau(\alpha)}\|  + \frac{2R\alpha\tau(\alpha)}{1-\gamma\lambda},
\end{align*}
where in the last inequality we use Eq.\ \eqref{const_stepsize:alpha}, i.e.,
\begin{align*}
\frac{(1+\gamma)\alpha\tau(\alpha)}{1-\gamma\lambda}\leq \log(2). 
\end{align*}
Next, using the equation above we have for all $k\geq\tau(\alpha)$
\begin{align*}
\|\btheta_{k}-\btheta_{k-\tau(\alpha)}\| &\leq \frac{2(1+\gamma)\alpha\tau(\alpha)}{1-\gamma\lambda}\|\btheta_{k-\tau(\alpha)}\|  + \frac{2R\alpha\tau(\alpha)}{1-\gamma\lambda}\notag\\ 
&\leq \frac{2(1+\gamma)\alpha\tau(\alpha)}{1-\gamma\lambda}\|\btheta_{k} - \btheta_{k-\tau(\alpha)}\| + \frac{2(1+\gamma)\alpha\tau(\alpha)}{1-\gamma\lambda}\|\btheta_{k}\|  + \frac{2R\alpha\tau(\alpha)}{1-\gamma\lambda}\notag\\
&\leq \frac{2}{3}\|\btheta_{k-\tau(\alpha)}-\theta_{k}\| + \frac{2(1+\gamma)\alpha\tau(\alpha)}{1-\gamma\lambda}\|\btheta_{k}\|  + \frac{2R\alpha\tau(\alpha)}{1-\gamma\lambda},
\end{align*}
which gives Eq.\ \eqref{lem_xbar_bound_const:Ineq2}, i.e.,
\begin{align*}
\|\btheta_{k}-\btheta_{k-\tau(\alpha)}\|\leq \frac{6(1+\gamma)\alpha\tau(\alpha)}{1-\gamma\lambda}\|\btheta_{k}\|  + \frac{6R\alpha\tau(\alpha)}{1-\gamma\lambda}.
\end{align*}
Using the inequality $(x+y)^2 \leq 2x^2 + 2y^2$ for all $x,y\in\Rset$, the preceding equation also gives Eq.\ \eqref{lem_xbar_bound_const:Ineq3}, i.e., 
\begin{align*}
\|\btheta_{k}-\btheta_{k-\tau(\alpha)}\|^2 &\leq \frac{72(1+\gamma)^2\alpha^2\tau^2(\alpha)}{(1-\gamma\lambda)^2}\|\btheta_{k}\|^2  + \frac{72R^2\alpha^2\tau^2(\alpha)}{(1-\gamma\lambda)^2}\leq 8\|\btheta_{k}\|^2  + 8R^2,
\end{align*}
where in the last inequality we use 
$(1+\gamma)\alpha\tau(\alpha)\leq (1-\gamma\lambda)\log(2).$ 
\end{proof}

Next, using the previous results we now consider the following two important lemmas. We denote by $\Fcal_{k} = \{\theta_{0},\ldots,\theta_{k}\}$ the filtration containing all the information generated by Algorithm \ref{alg:DTD} up to time $k$.
\begin{lemma}\label{lem_mixing:bound}
Suppose that Assumptions \ref{assump:doub_stoch}--\ref{assump:Markov} hold. Let the sequence $\{\theta_{k}^{v}\}$, for all $v\in\Vcal$, be generated by Algorithm \ref{alg:DTD}. In addition, let $\alpha_{t}=\alpha$ where $\alpha$ satisfies \eqref{const_stepsize:alpha}. Then for all $k\geq\tau(\alpha)$ we have
\begin{align}
&\left|\Eset\left[(\btheta_{k-\tau(\alpha)}-\theta^*)^{T}(\Abf(X_{k})-\Abf)\btheta_{k-\tau(\alpha)}|\Fcal_{k-\tau(\alpha)}\right]\right|\! \leq\! 27\alpha\Eset\left[\|\btheta_{k}\|^2\,|\,\Fcal_{k-\tau(\alpha)}\right] + (24R^2\! +\! \|\theta^*\|^2)\alpha.\label{lem_mixing_bound:Ineq1a}\\
&\left|\Eset\left[(\btheta_{k-\tau(\alpha)}-\theta^*)^{T}(\bbar(X_{k}) - b)\,|\,\Fcal_{k-\tau(\alpha)}\right]\right|
\leq 9\alpha\Eset[\|\btheta_{k}\|^2\,|\,\Fcal_{k-\tau(\alpha)}] + \left(8R^2 + 1 + \|\theta^*\|\right)\alpha.\label{lem_mixing_bound:Ineq1b}
\end{align}
\end{lemma}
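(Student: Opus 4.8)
The plan is to condition everything on $\Fcal_{k-\tau(\alpha)}$ and exploit the fact that, with $\tau := \tau(\alpha)$, both $\btheta_{k-\tau}$ and the fixed vector $\theta^*$ are $\Fcal_{k-\tau}$-measurable, hence may be pulled outside the conditional expectation. For the first estimate this yields
\begin{align*}
\Eset\left[(\btheta_{k-\tau}-\theta^*)^{T}(\Abf(X_{k})-\Abf)\btheta_{k-\tau}\mid\Fcal_{k-\tau}\right] = (\btheta_{k-\tau}-\theta^*)^{T}\,\Eset\left[\Abf(X_{k})-\Abf\mid\Fcal_{k-\tau}\right]\,\btheta_{k-\tau},
\end{align*}
and similarly the second term reduces to $(\btheta_{k-\tau}-\theta^*)^{T}\Eset[\bbar(X_{k})-b\mid\Fcal_{k-\tau}]$. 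The essential step is to bound the induced $2$-norm of each inner conditional expectation by $\alpha$. Because $\{X_k\}$ is Markov, $\Eset[\Abf(X_{k})\mid\Fcal_{k-\tau}]=\Eset[\Abf(X_{k})\mid X_{k-\tau}]$; invoking time-homogeneity together with Definition~\ref{def:mixing} (since $k-(k-\tau)=\tau\geq\tau(\alpha)$) gives $\|\Eset[\Abf(X_{k})-\Abf\mid\Fcal_{k-\tau}]\|\leq\alpha$, and the same for $\bbar(X_k)-b$ via \eqref{def_mixing:tau}.

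With these two operator-norm bounds in hand, the remainder is elementary. Applying Cauchy--Schwarz and then Young's inequality to the first term gives
\begin{align*}
\left|(\btheta_{k-\tau}-\theta^*)^{T}\Eset[\Abf(X_{k})-\Abf\mid\Fcal_{k-\tau}]\btheta_{k-\tau}\right| \leq \alpha\|\btheta_{k-\tau}-\theta^*\|\,\|\btheta_{k-\tau}\| \leq \tfrac{3\alpha}{2}\|\btheta_{k-\tau}\|^2 + \tfrac{\alpha}{2}\|\theta^*\|^2,
\end{align*}
while the second term is handled by $\|\btheta_{k-\tau}-\theta^*\|\leq\|\btheta_{k-\tau}\|+\|\theta^*\|$ and $\|\btheta_{k-\tau}\|\leq\tfrac12(1+\|\btheta_{k-\tau}\|^2)$. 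To reach the advertised dependence on $\Eset[\|\btheta_k\|^2\mid\Fcal_{k-\tau}]$ rather than on $\|\btheta_{k-\tau}\|^2$, I would convert the two via the triangle inequality $\|\btheta_{k-\tau}\|^2\leq 2\|\btheta_k\|^2+2\|\btheta_k-\btheta_{k-\tau}\|^2$ and then the drift bound \eqref{lem_xbar_bound_const:Ineq3} of Lemma~\ref{lem_const:xbar_bound}, which yields $\|\btheta_{k-\tau}\|^2\leq 18\|\btheta_k\|^2+16R^2$; taking conditional expectation turns this into $\|\btheta_{k-\tau}\|^2\leq 18\Eset[\|\btheta_k\|^2\mid\Fcal_{k-\tau}]+16R^2$ (the left side is $\Fcal_{k-\tau}$-measurable). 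Substituting produces the coefficients $\tfrac32\cdot18=27$ and $\tfrac32\cdot16=24$ for the first inequality, and $\tfrac12\cdot18=9$, $\tfrac12\cdot16=8$ for the second, matching \eqref{lem_mixing_bound:Ineq1a}--\eqref{lem_mixing_bound:Ineq1b} after the harmless relaxations $\tfrac12\|\theta^*\|^2\leq\|\theta^*\|^2$ and $\tfrac12\leq 1$.

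The main obstacle is not the algebra but the measure-theoretic justification of the mixing step: one must argue precisely that conditioning on the whole history $\Fcal_{k-\tau}$ collapses, by the Markov property, to conditioning on the state/eligibility pair at time $k-\tau$, so that Definition~\ref{def:mixing} applies after a time shift. This requires care about exactly what $\Fcal_{k-\tau}$ records (the states and eligibility vectors determining $\theta_0,\dots,\theta_{k-\tau}$) and the use of stationarity/time-homogeneity of $\{X_k\}$. A secondary point worth stating explicitly is that the bound \eqref{lem_xbar_bound_const:Ineq3} is exactly what lets the $\btheta_{k-\tau}$ appearing through the mixing inequality be re-expressed in terms of the current iterate $\btheta_k$; this relies on the step-size restriction \eqref{const_stepsize:alpha}, i.e. $(1+\gamma)\alpha\tau(\alpha)\leq(1-\gamma\lambda)\log 2$, which is inherited from \eqref{thm_linear:stepsize}.
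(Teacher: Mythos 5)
Your proposal is correct and follows essentially the same route as the paper's proof: pull the $\Fcal_{k-\tau(\alpha)}$-measurable factors out of the conditional expectation, bound the remaining conditional expectation by $\alpha$ via Definition~\ref{def:mixing}, apply Cauchy--Schwarz/Young, and then convert $\|\btheta_{k-\tau(\alpha)}\|^2$ into $\Eset[\|\btheta_k\|^2\,|\,\Fcal_{k-\tau(\alpha)}]$ using \eqref{lem_xbar_bound_const:Ineq3}, arriving at the identical constants $27$, $24R^2$, $9$, $8R^2$. Your extra care about the Markov/time-homogeneity justification for applying the mixing-time definition after conditioning on the full history is a point the paper leaves implicit, but it is a refinement of, not a departure from, the same argument.
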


\begin{proof}
First, using the mixing time defined in \eqref{def_mixing:tau} we consider for all $k\geq\tau(\alpha)$
\begin{align*}
&\left|\Eset\left[(\btheta_{k-\tau(\alpha)}-\theta^*)^{T}(\Abf(X_{k})-\Abf)\btheta_{k-\tau(\alpha)}\,|\,\Fcal_{k-\tau(\alpha)}\right]\right|\notag\\ 
&\quad =  \left|(\btheta_{k-\tau(\alpha)}-\theta^*)^{T}\Eset\left[\Abf(X_{k})-\Abf\,|\,\Fcal_{k-\tau(\alpha)}\right]\btheta_{k-\tau(\alpha)}\right|\notag\\ 
&\quad \stackrel{\eqref{def_mixing:tau}}{\leq} \alpha\Eset\left[\|\btheta_{k-\tau(\alpha)}-\theta^*\|\,\|\btheta_{k-\tau(\alpha)}\|\,|\,\Fcal_{k-\tau(\alpha)}\right]\notag\\
&\quad \leq \frac{\alpha}{2}\Eset\left[\|\btheta_{k-\tau(\alpha)}\|^2 + \|\btheta_{k-\tau(\alpha)}-\theta^*\|^2\,|\,\Fcal_{k-\tau(\alpha)}\right]\notag\\ 
&\quad \leq \frac{3\alpha}{2}\Eset\left[\|\btheta_{k-\tau(\alpha)}\|^2\,|\,\Fcal_{k-\tau(\alpha)}\right] + \|\theta^*\|^2\alpha\notag\\
&\quad \leq 3\alpha\Eset\left[\|\btheta_{k}-\btheta_{k-\tau(\alpha)}\|^2\,|\,\Fcal_{k-\tau(\alpha)}\right] + 3\alpha\Eset\left[\|\btheta_{k}\|^2\,|\,\Fcal_{k-\tau(\alpha)}\right] +  \|\theta^*\|^2\alpha,
\end{align*}
which by using Eq.\ \eqref{lem_xbar_bound_const:Ineq3} yields Eq.\   \eqref{lem_mixing_bound:Ineq1a}, i.e.,
\begin{align*}
&\left|\Eset\left[(\btheta_{k-\tau(\alpha)}-\theta^*)^{T}(\Abf(X_{k})-\Abf)\btheta_{k-\tau(\alpha)}\,|\,\Fcal_{k-\tau(\alpha)}\right]\right|\notag\\
&\quad \leq 3\alpha\left(8\|\btheta_{k}\|^2  + 8R^2\right) + 3\alpha\Eset\left[\|\btheta_{k}\|^2\,|\,\Fcal_{k-\tau(\alpha)}\right] +  \|\theta^*\|^2\alpha\notag\\
&\quad \leq 27\alpha\Eset\left[\|\btheta_{k}\|^2\,|\,\Fcal_{k-\tau(\alpha)}\right] + (24R^2 + \|\theta^*\|^2)\alpha.
\end{align*}
Next, we use the definition of mixing time in Eq.\ \eqref{def_mixing:tau} again to have
\begin{align*}
&\left|\Eset\left[(\btheta_{k-\tau(\alpha)}-\theta^*)^{T}(\bbar(X_{k}) - b)\,|\,\Fcal_{k-\tau(\alpha)}\right]\right|\notag\\ 
&\qquad = \left|(\btheta_{k-\tau(\alpha)}-\theta^*)^{T}\Eset\left[\bbar(X_{k}) - b\,|\,\Fcal_{k-\tau(\alpha)}\right]\right|\notag\\
&\qquad \leq \alpha\Eset[\|\btheta_{k-\tau(\alpha)}-\theta^*\|\,|\,\Fcal_{k-\tau(\alpha)}] \leq \alpha\Eset[\|\btheta_{k-\tau(\alpha)}\|\,|\,\Fcal_{k-\tau(\alpha)}] + \|\theta^*\|\alpha\notag\\
&\qquad \leq \frac{\alpha}{2}\Eset[\|\btheta_{k-\tau(\alpha)}\|^2\,|\,\Fcal_{k-\tau(\alpha)}] + \left(\frac{1}{2} + \|\theta^*\|\right)\alpha\notag\\
&\qquad \leq \alpha\Eset[\|\btheta_{k} - \btheta_{k-\tau(\alpha)}\|^2\,|\,\Fcal_{k-\tau(\alpha)}] + \alpha\Eset[\|\btheta_{k}\|^2\,|\,\Fcal_{k-\tau(\alpha)}] + \left(\frac{1}{2} + \|\theta^*\|\right)\alpha,
\end{align*}
which by using Eq.\ \eqref{lem_xbar_bound_const:Ineq3} yields Eq.\ \eqref{lem_mixing_bound:Ineq1b}, i.e.,
\begin{align*}
&\left|\Eset\left[(\btheta_{k-\tau(\alpha)}-\theta^*)^{T}(\bbar(X_{k}) - b)\,|\,\Fcal_{k-\tau(\alpha)}\right]\right|\notag\\ 
&\qquad \leq \alpha\left(8\Eset[\|\btheta_{k}\|^2\,|\,\Fcal_{k-\tau(\alpha)}] + 8R^2\right) +  \alpha\Eset[\|\btheta_{k}\|^2\,|\,\Fcal_{k-\tau(\alpha)}] + \left(\frac{1}{2} + \|\theta^*\|\right)\alpha\notag\\
&\qquad \leq 9\alpha\Eset[\|\btheta_{k}\|^2\,|\,\Fcal_{k-\tau(\alpha)}] + \left(8R^2 + 1 + \|\theta^*\|\right)\alpha.
\end{align*}
\end{proof}

\begin{lemma}\label{lem_const:opt_bound}
Suppose that Assumptions \ref{assump:doub_stoch}--\ref{assump:Markov} hold. Let the sequence $\{\theta_{k}^{v}\}$, for all $v\in\Vcal$, be generated by Algorithm \ref{alg:DTD}. In addition, let $\alpha_{k}=\alpha$ where $\alpha$ satisfies \eqref{const_stepsize:alpha}. Then for all $k\geq\tau(\alpha)$ we have
\begin{align}
&\left |\Eset\left[(\btheta_{k-\tau(\alpha)}-\theta^*)^{T}(\Abf(X_{k})-\Abf)(\btheta_{k}-\btheta_{k-\tau(\alpha)})\,|\,\Fcal_{k-\tau(\alpha)}\right]\right|\notag\\
&\qquad\qquad \leq \frac{108(1+\gamma)^2\alpha\tau(\alpha)}{(1-\gamma\lambda)^2}\Eset\left[\|\btheta_{k}\|^2\,|\,\Fcal_{k-\tau(\alpha)}\right] +  \frac{100(1+\gamma)^2(R+\|\theta^*\|)^2\alpha\tau(\alpha)}{(1-\gamma\lambda)^2}\cdot\label{lem_opt_bound_const:Ineq1}\\
&\left|\Eset\left[(\btheta_{k}-\btheta_{k-\tau(\alpha)})^{T}(\Abf(X_{k}) -\Abf)\btheta_{k-\tau(\alpha)}\,|\,\Fcal_{k-\tau(\alpha)}\right]\right|\notag\\
&\qquad\qquad \leq \frac{36(R+2)(1+\gamma)^2\alpha\tau(\alpha)}{(1-\gamma\lambda)^2}\Eset\left[\|\btheta_{k}\|^2\,|\,\Fcal_{k-\tau(\alpha)}\right] + \frac{32R(R+1)^2(1+\gamma)^2\alpha\tau(\alpha)}{(1-\gamma\lambda)^2}\cdot\label{lem_opt_bound_const:Ineq2}\\
&\left|\Eset\left[(\btheta_{k}-\btheta_{k-\tau(\alpha)})^{T}(\Abf(X_{k}) -\Abf)(\btheta_{k}-\btheta_{k-\tau(\alpha)})\,|\,\Fcal_{k-\tau(\alpha)}\right]\right|\notag\\
&\qquad\qquad \leq \frac{48(1+\gamma)^2\alpha\tau(\alpha)}{(1-\gamma\lambda)^2} \Eset\left[\|\btheta_{k}\|^2 \,|\,\Fcal_{k-\tau(\alpha)}\right] + \frac{48R^2\alpha\tau(\alpha)}{(1-\gamma\lambda)^2}\cdot\label{lem_opt_bound_const:Ineq3}\\
&\left|\Eset\left[(\btheta_{k}-\btheta_{k-\tau(\alpha)})^{T}(\bbar(X_{k}) - b)\,|\,\Fcal_{k-\tau(\alpha)}\right]\right|\notag\\
&\qquad\qquad \leq \frac{6R(1+\gamma)\alpha\tau(\alpha)}{(1-\gamma\lambda)^2}\Eset[\|\theta_{k}\|^2\,|\,\Fcal_{k-\tau(\alpha)}] + \frac{12R(R+1)\alpha\tau(\alpha)}{(1-\gamma\lambda)^2}\cdot\label{lem_opt_bound_const:Ineq4}
\end{align}
\end{lemma}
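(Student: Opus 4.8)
The plan is to bound all four cross terms by a single mechanical recipe: first pass the absolute value inside, using $|\Eset[\,\cdot\mid\Fcal_{k-\tau(\alpha)}]|\le \Eset[\,|\cdot|\mid\Fcal_{k-\tau(\alpha)}]$, then apply the Cauchy--Schwarz bound $|a^{T}Mb|\le\|a\|\,\|M\|\,\|b\|$ (with $M=\Abf(X_{k})-\Abf$, or with $M$ replaced by the vector $\bbar(X_{k})-b$ in \eqref{lem_opt_bound_const:Ineq4}). The structural point that separates this lemma from Lemma \ref{lem_mixing:bound} is that here the factor $\btheta_{k}-\btheta_{k-\tau(\alpha)}$ is \emph{not} $\Fcal_{k-\tau(\alpha)}$-measurable and is coupled to $X_{k}$, so one cannot invoke the mixing-time estimate \eqref{def_mixing:tau} on $\Abf(X_{k})-\Abf$. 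Instead I would control the matrix/vector factor by the purely deterministic bounds $\|\Abf(X_{k})-\Abf\|\le 2(1+\gamma)/(1-\gamma\lambda)$ and $\|\bbar(X_{k})-b\|\le 2R/(1-\gamma\lambda)$, obtained by combining \eqref{sec_analysis:Akbk_bound} with \eqref{sec_analysis:Ab_bound} through the triangle inequality.

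Once the operator norm is pulled out, the remaining task is to estimate an inner product of two $\btheta$-vectors, and for this I would feed in the three bounds of Lemma \ref{lem_const:xbar_bound}. For \eqref{lem_opt_bound_const:Ineq1} I pair the measurable factor $\btheta_{k-\tau(\alpha)}-\theta^{*}$ with bound \eqref{lem_xbar_bound_const:Ineq2} on $\|\btheta_{k}-\btheta_{k-\tau(\alpha)}\|$, rewriting $\|\btheta_{k-\tau(\alpha)}-\theta^{*}\|\le\|\btheta_{k}-\btheta_{k-\tau(\alpha)}\|+\|\btheta_{k}\|+\|\theta^{*}\|$ and applying Young's inequality to split the product into an $\|\btheta_{k}\|^{2}$-part and a constant part. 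For \eqref{lem_opt_bound_const:Ineq2} I use \eqref{lem_xbar_bound_const:Ineq1}, whose right-hand side is phrased in $\|\btheta_{k-\tau(\alpha)}\|$; this pairs naturally with the measurable factor $\btheta_{k-\tau(\alpha)}$ to yield a $\|\btheta_{k-\tau(\alpha)}\|^{2}$ term, which I then transfer to $\|\btheta_{k}\|^{2}$ via $\|\btheta_{k-\tau(\alpha)}\|\le\|\btheta_{k}\|+\|\btheta_{k}-\btheta_{k-\tau(\alpha)}\|$ together with \eqref{lem_xbar_bound_const:Ineq3}. Inequality \eqref{lem_opt_bound_const:Ineq4} follows the same template with the vector bound on $\|\bbar(X_{k})-b\|$ and bound \eqref{lem_xbar_bound_const:Ineq2}. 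In each case one finally takes $\Eset[\,\cdot\mid\Fcal_{k-\tau(\alpha)}]$ and collects the $\Eset[\|\btheta_{k}\|^{2}\mid\Fcal_{k-\tau(\alpha)}]$ coefficient and the constant coefficient.

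The delicate case is \eqref{lem_opt_bound_const:Ineq3}, where both vector factors are $\btheta_{k}-\btheta_{k-\tau(\alpha)}$, so the squared bound \eqref{lem_xbar_bound_const:Ineq3} enters directly and the naive estimate scales like $(\alpha\tau(\alpha))^{2}$ rather than the target order $\alpha\tau(\alpha)$. To recover first order I would factor out one copy of $(1+\gamma)\alpha\tau(\alpha)/(1-\gamma\lambda)$ and absorb it using the step-size hypothesis \eqref{const_stepsize:alpha}, namely $(1+\gamma)\alpha\tau(\alpha)/(1-\gamma\lambda)\le\log 2<1$; this same inequality is what demotes every stray $(\alpha\tau(\alpha))^{2}$ factor and every $\|\btheta_{k-\tau(\alpha)}\|\to\|\btheta_{k}\|$ conversion constant to a clean multiple of $\alpha\tau(\alpha)$. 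I expect the only real obstacle to be the bookkeeping: keeping the numerical constants consistent while repeatedly trading products for sums of squares, and ensuring that each conversion $\|\btheta_{k-\tau(\alpha)}\|\leftrightarrow\|\btheta_{k}\|$ is charged to the $\alpha\tau(\alpha)$ budget instead of inflating the $\|\btheta_{k}\|^{2}$ coefficient. The four resulting $\|\btheta_{k}\|^{2}$-coefficients and constant terms are precisely what later sum (after bounding $(1+\gamma)\le(1+\gamma)^{2}$) into the aggregate constants appearing in Lemma \ref{lem_const:bias}.
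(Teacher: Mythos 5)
Your proposal is correct and follows essentially the same route as the paper: deterministic operator-norm bounds $\|\Abf(X_{k})-\Abf\|\le 2(1+\gamma)/(1-\gamma\lambda)$ and $\|\bbar(X_{k})-b\|\le 2R/(1-\gamma\lambda)$ in place of the mixing-time estimate (precisely because the increment $\btheta_{k}-\btheta_{k-\tau(\alpha)}$ is not $\Fcal_{k-\tau(\alpha)}$-measurable), the increment bounds of Lemma~\ref{lem_const:xbar_bound}, Young's inequality to split products, and the step-size condition \eqref{const_stepsize:alpha} to demote the $(\alpha\tau(\alpha))^{2}$ factor in \eqref{lem_opt_bound_const:Ineq3} to first order. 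The only cosmetic difference is which of \eqref{lem_xbar_bound_const:Ineq1} or \eqref{lem_xbar_bound_const:Ineq2} you invoke first in \eqref{lem_opt_bound_const:Ineq1} before converting $\|\btheta_{k-\tau(\alpha)}\|$ to $\|\btheta_{k}\|$, which affects only the bookkeeping of constants.
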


\begin{proof}
First, we show Eq.\ \eqref{lem_opt_bound_const:Ineq1}. using Eqs.\ \eqref{sec_analysis:Akbk_bound} and \eqref{sec_analysis:Ab_bound} we have for all $k\geq \tau(\alpha)$
\begin{align*}
&\left |\Eset\left[(\btheta_{k-\tau(\alpha)}-\theta^*)^{T}(\Abf(X_{k})-\Abf)(\btheta_{k}-\btheta_{k-\tau(\alpha)})\,|\,\Fcal_{k-\tau(\alpha)}\right]\right|\notag\\
&\quad \leq \Eset\left[(\|\Abf(X_{k})\|+\|\Abf\|)\|\btheta_{k-\tau(\alpha)}-\theta^*\|\|\btheta_{k}-\btheta_{k-\tau(\alpha)}\|\,|\,\Fcal_{k-\tau(\alpha)}\right]\notag\\
&\quad \leq \frac{2(1+\gamma)}{1-\gamma\lambda}\Eset\left[\|\btheta_{k-\tau(\alpha)}\|\|\btheta_{k}-\btheta_{k-\tau(\alpha)}\|\,|\,\Fcal_{k-\tau(\alpha)}\right]\notag\\ 
&\qquad + \frac{2\|\theta^*\|(1+\gamma)}{1-\gamma\lambda}\Eset\left[\|\btheta_{k}-\btheta_{k-\tau(\alpha)}\|\,|\,\Fcal_{k-\tau(\alpha)}\right],
\end{align*}
which by using Eq.\ \eqref{lem_xbar_bound_const:Ineq1} to upper bound the term $\|\btheta_{k}-\btheta_{k-\tau(\alpha)}\|$ yields
\begin{align*}
&\left|\Eset\left[(\btheta_{k-\tau(\alpha)}-\theta^*)^{T}(\Abf(X_{k})-\Abf)(\btheta_{k}-\btheta_{k-\tau(\alpha)})\,|\,\Fcal_{k-\tau(\alpha)}\right]\right|\notag\\
&\quad \leq \frac{2(1+\gamma)}{1-\gamma\lambda}\left(\frac{2(1+\gamma)\alpha\tau(\alpha)}{1-\gamma\lambda}\Eset\left[\|\btheta_{k-\tau(\alpha)}\|^2\,|\,\Fcal_{k-\tau(\alpha)}\right] + \frac{2R\alpha\tau(\alpha)}{1-\gamma\lambda}\Eset\left[\|\btheta_{k-\tau(\alpha)}\|\,|\,\Fcal_{k-\tau(\alpha)}\right]\right)\notag\\ 
&\quad \qquad + \frac{2\|\theta^*\|(1+\gamma)}{1-\gamma\lambda}\left(\frac{2(1+\gamma)\alpha\tau(\alpha)}{1-\gamma\lambda}\Eset\left[\|\btheta_{k-\tau(\alpha)}\|\,|\,\Fcal_{k-\tau(\alpha)}\right] + \frac{2R\alpha\tau(\alpha)}{1-\gamma\lambda}\right)\notag\\
&\quad= \frac{4(1+\gamma)^2\alpha\tau(\alpha)}{(1-\gamma\lambda)^2}\Eset\left[\|\btheta_{k-\tau(\alpha)}\|^2\,|\,\Fcal_{k-\tau(\alpha)}\right]\notag\\ 
&\quad \qquad  + \frac{4(1+\gamma)^2(R+\|\theta^*\|)\alpha\tau(\alpha)}{(1-\gamma\lambda)^2}\Eset\left[\|\btheta_{k-\tau(\alpha)}\|\,|\,\Fcal_{k-\tau(\alpha)}\right] + \frac{4\|\theta^*\|R(1+\gamma)\alpha\tau(\alpha)}{(1-\gamma\lambda)^2}\cdot
\end{align*}
Using the relation $2xy\leq x^2 + y^2$ to the second term on the right-hand side of above yields
\begin{align*}
&\left|\Eset\left[(\btheta_{k-\tau(\alpha)}-\theta^*)^{T}(\Abf(X_{k})-\Abf)(\btheta_{k}-\btheta_{k-\tau(\alpha)})\,|\,\Fcal_{k-\tau(\alpha)}\right]\right|\notag\\
&\quad \leq \frac{6(1+\gamma)^2\alpha\tau(\alpha)}{(1-\gamma\lambda)^2}\Eset\left[\|\btheta_{k-\tau(\alpha)}\|^2\,|\,\Fcal_{k-\tau(\alpha)}\right] +  \frac{2(1+\gamma)^2(R+\|\theta^*\|)^2\alpha\tau(\alpha)}{(1-\gamma\lambda)^2}\notag\\ 
&\quad\qquad + \frac{4R(1+\gamma)\|\theta^*\|\alpha\tau(\alpha)}{(1-\gamma\lambda)^2}\notag\\
&\quad \leq \frac{6(1+\gamma)^2\alpha\tau(\alpha)}{(1-\gamma\lambda)^2}\Eset\left[\|\btheta_{k-\tau(\alpha)}\|^2\,|\,\Fcal_{k-\tau(\alpha)}\right] +  \frac{4(1+\gamma)^2(R+\|\theta^*\|)^2\alpha\tau(\alpha)}{(1-\gamma\lambda)^2}\notag\\
&\quad \leq \frac{12(1+\gamma)^2\alpha\tau(\alpha)}{(1-\gamma\lambda)^2}\left(\Eset\left[\|\btheta_{k}-\btheta_{k-\tau(\alpha)}\|^2\,|\,\Fcal_{k-\tau(\alpha)}\right] + \Eset\left[\|\btheta_{k}\|^2\,|\,\Fcal_{k-\tau(\alpha)}\right]\right)\notag\\ 
&\quad\qquad +  \frac{4(1+\gamma)^2(R+\|\theta^*\|)^2\alpha\tau(\alpha)}{(1-\gamma\lambda)^2} ,
\end{align*}
which by using Eq.\ \eqref{lem_xbar_bound_const:Ineq3} to the first term on the right-hand side we obtain
\begin{align}
&\left |\Eset\left[(\btheta_{k-\tau(\alpha)}-\theta^*)^{T}(\Abf(X_{k})-\Abf)(\btheta_{k}-\btheta_{k-\tau(\alpha)})\,|\,\Fcal_{k-\tau(\alpha)}\right]\right|\notag\\
&\quad \leq \frac{12(1+\gamma)^2\alpha\tau(\alpha)}{(1-\gamma\lambda)^2}\left(8\Eset\left[\|\btheta_{k}\|^2\,|\,\Fcal_{k-\tau(\alpha)}\right] + 8R^2 \right)\notag\\ 
&\quad\qquad + \frac{12(1+\gamma)^2\alpha\tau(\alpha)}{(1-\gamma\lambda)^2}\Eset\left[\|\btheta_{k}\|^2\,|\,\Fcal_{k-\tau(\alpha)}\right] +  \frac{4(1+\gamma)^2(R+\|\theta^*\|)^2\alpha\tau(\alpha)}{(1-\gamma\lambda)^2} \notag\\
&\quad \leq \frac{108(1+\gamma)^2\alpha\tau(\alpha)}{(1-\gamma\lambda)^2}\Eset\left[\|\btheta_{k}\|^2\,|\,\Fcal_{k-\tau(\alpha)}\right] +  \frac{100(1+\gamma)^2(R+\|\theta^*\|)^2\alpha\tau(\alpha)}{(1-\gamma\lambda)^2}\cdot
\end{align}
Next, using Eqs.\ \eqref{sec_analysis:Akbk_bound} and  \eqref{sec_analysis:Ab_bound} again we have
\begin{align*}
&\left|\Eset\left[(\btheta_{k}-\btheta_{k-\tau(\alpha)})^{T}(\Abf(X_{k}) -\Abf)\btheta_{k-\tau(\alpha)}\,|\,\Fcal_{k-\tau(\alpha)}\right]\right|\notag\\
&\quad \leq  \frac{2(1+\gamma)}{1-\gamma\lambda}\Eset\left[\|\btheta_{k-\tau(\alpha)}\|\|\btheta_{k}-\btheta_{k-\tau(\alpha)}\|\,|\,\Fcal_{k-\tau(\alpha)}\right]
\end{align*}
which by using \eqref{lem_xbar_bound_const:Ineq1} and $2x\leq x^2 + 1$ yields
\begin{align*}
&\left|\Eset\left[(\btheta_{k}-\btheta_{k-\tau(\alpha)})^{T}(\Abf(X_{k}) -\Abf)\btheta_{k-\tau(\alpha)}\,|\,\Fcal_{k-\tau(\alpha)}\right]\right|\notag\\
&\quad\leq \frac{2(1+\gamma)}{1-\gamma\lambda}\Eset\left[\frac{2(1+\gamma)\alpha\tau(\alpha)}{1-\gamma\lambda}\|\btheta_{k-\tau(\alpha)}\|^2  + \frac{2R\alpha\tau(\alpha)}{1-\gamma\lambda}\|\btheta_{k-\tau(\alpha)}\|\,\Bigg|\,\Fcal_{k-\tau(\alpha)}\right]\\
&\quad\leq \frac{2(1+\gamma)}{1-\gamma\lambda}\Eset\left[\frac{(R+2)(1+\gamma)\alpha\tau(\alpha)}{1-\gamma\lambda}\|\btheta_{k-\tau(\alpha)}\|^2  + \frac{R\alpha\tau(\alpha)}{1-\gamma\lambda}\,\Bigg|\,\Fcal_{k-\tau(\alpha)}\right]\\
&\quad\leq  \frac{4(R+2)(1+\gamma)^2\alpha\tau(\alpha)}{(1-\gamma\lambda)^2}\Eset\left[\|\btheta_{k}-\btheta_{k-\tau(\alpha)}\|^2  + \|\btheta_{k}\|^2\,\big|\,\Fcal_{k-\tau(\alpha)}\right] + \frac{2R(1+\gamma)\alpha\tau(\alpha)}{(1-\gamma\lambda)^2}.
\end{align*}
Using Eq.\ \eqref{lem_xbar_bound_const:Ineq3} to the preceding equation we have
\begin{align*}
&\left|\Eset\left[(\btheta_{k}-\btheta_{k-\tau(\alpha)})^{T}(\Abf(X_{k}) -\Abf)\btheta_{k-\tau(\alpha)}\,|\,\Fcal_{k-\tau(\alpha)}\right]\right|\notag\\
&\quad\leq \frac{4(R+2)(1+\gamma)^2\alpha\tau(\alpha)}{(1-\gamma\lambda)^2}\Eset\left[9\|\btheta_{k}\|^2 + 8R^2\,|\,\Fcal_{k-\tau(\alpha)}\right] + \frac{2R(1+\gamma)\alpha\tau(\alpha)}{(1-\gamma\lambda)^2}\\
&\quad \leq \frac{36(R+2)(1+\gamma)^2\alpha\tau(\alpha)}{(1-\gamma\lambda)^2}\Eset\left[\|\btheta_{k}\|^2\,|\,\Fcal_{k-\tau(\alpha)}\right] + \frac{32R(R+1)^2(1+\gamma)^2\alpha\tau(\alpha)}{(1-\gamma\lambda)^2}\cdot
\end{align*}
Third, using the first inequality in  \eqref{lem_xbar_bound_const:Ineq3} yields 
\begin{align*}
&\left|\Eset\left[(\btheta_{k}-\btheta_{k-\tau(\alpha)})^{T}(\Abf(X_{k}) -\Abf)(\btheta_{k}-\btheta_{k-\tau(\alpha)})\,|\,\Fcal_{k-\tau(\alpha)}\right]\right|\notag\\
&\quad \leq \Eset\left[\|\Abf(X_{k})-\Abf\|\|\btheta_{k}-\btheta_{k-\tau(\alpha)}\|^2\,|\,\Fcal_{k-\tau(\alpha)}\right]\notag\\
&\quad \leq \frac{2(1+\gamma)}{1-\gamma\lambda}\left(\frac{72(1+\gamma)^2\alpha^2\tau^2(\alpha)}{(1-\gamma\lambda)^2}\Eset\left[\|\btheta_{k}\|^2\,|\,\Fcal_{k-\tau(\alpha)}\right]  + \frac{72R^2\alpha^2\tau^2(\alpha)}{(1-\gamma\lambda)^2}\right)\notag\\
&\quad \leq \frac{48(1+\gamma)^2\alpha\tau(\alpha)}{(1-\gamma\lambda)^2} \Eset\left[\|\btheta_{k}\|^2 \,|\,\Fcal_{k-\tau(\alpha)}\right] + \frac{48R^2\alpha\tau(\alpha)}{(1-\gamma\lambda)^2},
\end{align*}
where in the last inequality we use Eq.\ \eqref{const_stepsize:alpha} to have
$(1+\gamma)\alpha\tau(\alpha)/(1-\gamma\lambda)\leq \log(2)\leq 1/3.$ Finally, using Eqs. \eqref{sec_analysis:Akbk_bound} and \eqref{sec_analysis:Ab_bound} we get Eq.\ \eqref{lem_opt_bound_const:Ineq4}, i.e.,
\begin{align*}
&\left|\Eset\left[(\btheta_{k}-\btheta_{k-\tau(\alpha)})^{T}(\bbar(X_{k}) - b)\,|\,\Fcal_{k-\tau(\alpha)}\right]\right|\notag\\
&\qquad \leq \frac{2R}{1-\gamma\lambda} E[\|\btheta_{k}-\btheta_{k-\tau(\alpha)}\|\,|\,\Fcal_{k-\tau(\alpha)}]\notag\\
&\qquad \stackrel{\eqref{lem_xbar_bound_const:Ineq2}}{\leq} \frac{2R}{1-\gamma\lambda}\left(\frac{6(1+\gamma)\alpha\tau(\alpha)}{1-\gamma\lambda}\|\btheta_{k}\|  + \frac{6R\alpha\tau(\alpha)}{1-\gamma\lambda}\right)\notag\\
&\qquad\leq \frac{12R(1+\gamma)\alpha\tau(\alpha)}{(1-\gamma\lambda)^2}\Eset[\|\theta_{k}\|\,|\,\Fcal_{k-\tau(\alpha)}] + \frac{12R^2\alpha\tau(\alpha)}{(1-\gamma\lambda)^2}\notag\\
&\qquad \leq \frac{6R(1+\gamma)\alpha\tau(\alpha)}{(1-\gamma\lambda)^2}\Eset[\|\theta_{k}\|^2\,|\,\Fcal_{k-\tau(\alpha)}] + \frac{12R(R+1)\alpha\tau(\alpha)}{(1-\gamma\lambda)^2},
\end{align*}
where the last inequality we use $2x\leq x^2 + 1$.
\end{proof}

Finally, using Lemmas \ref{lem_const:xbar_bound}--\ref{lem_const:opt_bound} we now ready to show Lemma \ref{lem_const:bias}.

\begin{proof}[Proof of Lemma \ref{lem_const:bias}]
First we consider 
\begin{align*}
&\Eset\left[(\btheta_{k}-\theta^*)^{T}(\Abf(X_{k})\btheta_{k} -\Abf\btheta_{k})\,|\,\Fcal_{k-\tau(\alpha)}\right]\\
& = \Eset\left[(\btheta_{k-\tau(\alpha)}-\theta^*)^{T}(\Abf(X_{k})-\Abf)\btheta_{k}\,|\,\Fcal_{k-\tau(\alpha)}\right] + \Eset\left[(\btheta_{k}-\btheta_{k-\tau(\alpha)})^{T}(\Abf(X_{k}) -\Abf)\btheta_{k}\,|\,\Fcal_{k-\tau(\alpha)}\right]\notag\\
&= \Eset\left[(\btheta_{k-\tau(\alpha)}-\theta^*)^{T}(\Abf(X_{k})-\Abf)\btheta_{k-\tau(\alpha)}\,|\,\Fcal_{k-\tau(\alpha)}\right]\notag\\ 
&\quad\qquad + \left[(\btheta_{k-\tau(\alpha)}-\theta^*)^{T}(\Abf(X_{k})-\Abf)(\btheta_{k}-\btheta_{k-\tau(\alpha)})\,|\,\Fcal_{k-\tau(\alpha)}\right]\notag\\ 
&\qquad + \Eset\left[(\btheta_{k}-\btheta_{k-\tau(\alpha)})^{T}(\Abf(X_{k}) -\Abf)\btheta_{k-\tau(\alpha)}\,|\,\Fcal_{k-\tau(\alpha)}\right]\notag\\ 
&\qquad + \Eset\left[(\btheta_{k}-\btheta_{k-\tau(\alpha)})^{T}(\Abf(X_{k}) -\Abf)(\btheta_{k}-\btheta_{k-\tau(\alpha)})\,|\,\Fcal_{k-\tau(\alpha)}\right], 
\end{align*}
which by using Eqs. \eqref{lem_mixing_bound:Ineq1a} and \eqref{lem_opt_bound_const:Ineq1}--\eqref{lem_opt_bound_const:Ineq3} we have
\begin{align}
&\Big|\Eset\left[(\btheta_{k}-\theta^*)^{T}(\Abf(X_{k})\btheta_{k} -\Abf\btheta_{k})\,|\,\Fcal_{k-\tau(\alpha)}\right]\Big|\notag\\
& \leq 27\alpha\Eset\left[\|\btheta_{k}\|^2\,|\,\Fcal_{k-\tau(\alpha)}\right] + (24R^2 + \|\theta^*\|^2)\alpha\notag\\
&\quad+ \frac{108(1+\gamma)^2\alpha\tau(\alpha)}{(1-\gamma\lambda)^2}\Eset\left[\|\btheta_{k}\|^2\,|\,\Fcal_{k-\tau(\alpha)}\right] +  \frac{100(1+\gamma)^2(R+\|\theta^*\|)^2\alpha\tau(\alpha)}{(1-\gamma\lambda)^2}\notag\\
&\quad + \frac{36(R+2)(1+\gamma)^2\alpha\tau(\alpha)}{(1-\gamma\lambda)^2}\Eset\left[\|\btheta_{k}\|^2\,|\,\Fcal_{k-\tau(\alpha)}\right] + \frac{32R(R+1)^2(1+\gamma)^2\alpha\tau(\alpha)}{(1-\gamma\lambda)^2}\notag\\
&\quad + \frac{48(1+\gamma)^2\alpha\tau(\alpha)}{(1-\gamma\lambda)^2} \Eset\left[\|\btheta_{k}\|^2 \,|\,\Fcal_{k-\tau(\alpha)}\right] + \frac{48R^2\alpha\tau(\alpha)}{(1-\gamma\lambda)^2}\notag\\
& \leq \left(27 + \frac{(228 + 36R)(1+\gamma)^2\tau(\alpha)}{(1-\gamma\lambda)^2}\right)\alpha \Eset\left[\|\btheta_{k}\|^2 \,|\,\Fcal_{k-\tau(\alpha)}\right]\notag\\
&\quad+ \left(24R^2+\|\theta^*\|^2 + \frac{\Big(48R^2+32R(R+1)^2+100(R+\|\theta^*\|)^2\Big)(1+\gamma)^2\tau(\alpha)}{(1-\gamma\lambda)^2}\right)\alpha.\label{thm_linear:Eq1a}
\end{align}
Similarly, we consider 
\begin{align*}
(\btheta_{k}-\theta^*)^{T}(\bbar(X_{k}) - b) = (\btheta_{k}-\btheta_{k-\tau(\alpha)})^{T}(\bbar(X_{k}) - b) + (\btheta_{k-\tau(\alpha)}-\theta^*)^{T}(\bbar(X_{k}) - b),    
\end{align*}
which by using Eqs.\ \eqref{lem_mixing_bound:Ineq1b} and \eqref{lem_opt_bound_const:Ineq4} yields
\begin{align}
&\Big|\Eset\left[(\btheta_{k}-\theta^*)^{T}(\bbar(X_{k}) - b)\,|\,\Fcal_{k-\tau(\alpha)}\right]\Big|\notag\\
&\leq 9\alpha\Eset[\|\btheta_{k}\|^2\,|\,\Fcal_{k-\tau(\alpha)}] + \left(8R^2 + 1 + \|\theta^*\|\right)\alpha + \frac{6R(1+\gamma)\alpha\tau(\alpha)}{(1-\gamma\lambda)^2}\Eset[\|\theta_{k}\|^2\,|\,\Fcal_{k-\tau(\alpha)}] + \frac{12R(R+1)\alpha\tau(\alpha)}{(1-\gamma\lambda)^2}\notag\\
&= \left(9 + \frac{6R(1+\gamma)\tau(\alpha)}{(1-\gamma\lambda)^2}\right)\alpha\Eset[\|\theta_{k}\|^2\,|\,\Fcal_{k-\tau(\alpha)}] + \left(\left(8R^2 + 1 + \|\theta^*\|\right) + \frac{12R(R+1)\tau(\alpha)}{(1-\gamma\lambda)^2} \right)\alpha.\label{thm_linear:Eq1b}
\end{align}
Using Eqs.\ \eqref{thm_linear:Eq1a} and \eqref{thm_linear:Eq1b} we obtain
\begin{align}
&\left|\Eset\left[(\btheta_{k}-\theta^*)^{T}(\Abf(X_{k})\btheta_{k} -\Abf\btheta_{k} + \bbar(X_{k}) - b)\,|\,\Fcal_{k-\tau(\alpha)}\right]\right|\notag\\
&\quad \leq \left(27 + \frac{(228 + 36R)(1+\gamma)^2\tau(\alpha)}{(1-\gamma\lambda)^2}\right)\alpha \Eset\left[\|\btheta_{k}\|^2 \,|\,\Fcal_{k-\tau(\alpha)}\right]\notag\\
&\quad\qquad + \left(24R^2+\|\theta^*\|^2 + \frac{\Big(48R^2+32R(R+1)^2+100(R+\|\theta^*\|)^2\Big)(1+\gamma)^2\tau(\alpha)}{(1-\gamma\lambda)^2}\right)\alpha\notag\\
&\quad\qquad + \left(9 + \frac{6R(1+\gamma)\tau(\alpha)}{(1-\gamma\lambda)^2}\right)\alpha\Eset[\|\theta_{k}\|^2\,|\,\Fcal_{k-\tau(\alpha)}] + \left(\left(8R^2 + 1 + \|\theta^*\|\right) + \frac{12R(R+1)\tau(\alpha)}{(1-\gamma\lambda)^2} \right)\alpha\notag\\
&\quad \leq \left(36 + \frac{(228 + 42R)(1+\gamma)^2\tau(\alpha)}{(1-\gamma\lambda)^2}\right)\alpha \Eset\left[\|\btheta_{k}\|^2 \,|\,\Fcal_{k-\tau(\alpha)}\right]\notag\\
&\quad\qquad + \left(32R^2+ 2\|\theta^*\|^2 + 1 +  \frac{\Big(48R^2+32(R+1)^3+100(R+\|\theta^*\|)^2\Big)(1+\gamma)^2\tau(\alpha)}{(1-\gamma\lambda)^2}\right)\alpha.\label{thm_linear:Eq1c}
\end{align}
\end{proof}

\section{Time-varying step sizes}\label{apx_tv}

Similarly, to show Lemma \ref{lem_tv:bias} we first consider the following sequence of lemmas.

\begin{lemma}\label{lem_tv:xbar_bound}
Suppose that Assumptions \ref{assump:doub_stoch}--\ref{assump:Markov} hold. Let the sequence $\{\theta_{k}^{v}\}$, for all $v\in\Vcal$, be generated by Algorithm \ref{alg:DTD} and let $\alpha_{k}$ satisfy the conditions in Lemma \ref{lem_tv:bias}. Denote by $\alpha_{k,\tau(\alpha_{k})}$
\begin{align}
\alpha_{k,\tau(\alpha_{k})} = \sum_{t = k-\tau(\alpha_{k})}^{k-1}\alpha_{t}.\label{notation:alpha_t_tau}    
\end{align}
Then for all $k\geq \Kcal^*$ we have
\begin{align}
&a)\qquad \|\btheta_{k}-\btheta_{k-\tau(\alpha_{k})}\|\leq \frac{2(1+\gamma)}{1-\gamma\lambda}\|\btheta_{k-\tau(\alpha_{k})}\|\alpha_{k,\tau(\alpha_{k})} + \frac{2R}{1-\gamma\lambda}\alpha_{k,\tau(\alpha_{k})}.\label{lem_tv_xbar_bound:Ineq1}\\
&b)\qquad \|\btheta_{k}-\btheta_{k-\tau(\alpha_{k})}\|\leq \frac{6(1+\gamma)}{1-\gamma\lambda}\|\theta_{k}\|\alpha_{k,\tau(\alpha_{k})} + \frac{6R}{1-\gamma\lambda}\alpha_{k,\tau(\alpha_{k})}.\label{lem_tv_xbar_bound:Ineq2}\\
&c)\qquad \|\btheta_{k}-\btheta_{k-\tau(\alpha_{k})}\|^2 \leq \frac{72(1+\gamma)^2}{(1-\gamma\lambda)^2}\alpha_{k,\tau(\alpha_{k})}^2\|\btheta_{k}\|^2 + \frac{72R^2}{(1-\gamma\lambda)^2}\alpha_{k,\tau(\alpha_{k})}^2 \leq 8\|\btheta_{k}\|^2 + 8R^2.\label{lem_tv_xbar_bound:Ineq3}
\end{align}
\end{lemma}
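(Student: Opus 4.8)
The plan is to reproduce the three-part argument of Lemma~\ref{lem_const:xbar_bound} verbatim in structure, replacing the product $\alpha\tau(\alpha)$ by the window sum $\alpha_{k,\tau(\alpha_k)} = \sum_{t=k-\tau(\alpha_k)}^{k-1}\alpha_t$ everywhere, and invoking monotonicity of $\{\alpha_k\}$ to keep this sum controlled. The single quantitative input that makes everything go through is
\[
\frac{1+\gamma}{1-\gamma\lambda}\,\alpha_{k,\tau(\alpha_k)} \;\leq\; \frac{1+\gamma}{1-\gamma\lambda}\,\tau(\alpha_k)\,\alpha_{k-\tau(\alpha_k)} \;\leq\; \log(2),
\]
where the first inequality holds because $\alpha_t \leq \alpha_{k-\tau(\alpha_k)}$ on the window (nonincreasing step sizes, so the largest term is the first one) and the second is exactly the step-size condition~\eqref{thm_sublinear:stepsize}.

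First I would take the $2$-norm of the averaged recursion~\eqref{sec_analysis:theta_bar} and apply the uniform bounds~\eqref{sec_analysis:Akbk_bound} to obtain the one-step growth estimate
\[
\|\btheta_{k+1}\| \leq \Bigl(1 + \tfrac{1+\gamma}{1-\gamma\lambda}\alpha_k\Bigr)\|\btheta_k\| + \tfrac{R}{1-\gamma\lambda}\alpha_k.
\]
Iterating this over $u \in [k-\tau(\alpha_k),k]$ and bounding the accumulated products $\prod_\ell (1 + \tfrac{1+\gamma}{1-\gamma\lambda}\alpha_\ell) \leq \exp\{\tfrac{1+\gamma}{1-\gamma\lambda}\alpha_{k,\tau(\alpha_k)}\} \leq 2$ via $1+x\leq e^x$ and the displayed bound above, I get the uniform window estimate $\|\btheta_u\| \leq 2\|\btheta_{k-\tau(\alpha_k)}\| + \tfrac{2R}{1-\gamma\lambda}\alpha_{k,\tau(\alpha_k)}$. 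Telescoping $\|\btheta_k - \btheta_{k-\tau(\alpha_k)}\| \leq \sum_u \|\btheta_{u+1}-\btheta_u\|$, bounding each increment by the growth estimate, and substituting the window bound yields part~(a) after collecting the step sizes into $\alpha_{k,\tau(\alpha_k)}$ and using the $\log(2)$ bound to absorb the resulting quadratic cross term.

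Part~(b) then follows from part~(a) by writing $\|\btheta_{k-\tau(\alpha_k)}\| \leq \|\btheta_k\| + \|\btheta_k - \btheta_{k-\tau(\alpha_k)}\|$; the resulting self-referential term carries coefficient $\tfrac{2(1+\gamma)}{1-\gamma\lambda}\alpha_{k,\tau(\alpha_k)} \leq \tfrac{2}{3}$, so it can be moved to the left-hand side, and multiplying through by $3$ produces the constants $6$ in the statement. Part~(c) is obtained by squaring~(b) with $(x+y)^2 \leq 2x^2 + 2y^2$ and then invoking $\tfrac{1+\gamma}{1-\gamma\lambda}\alpha_{k,\tau(\alpha_k)} \leq \log(2)$ a final time to replace the step-dependent prefactors by the absolute constant $8$.

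The main obstacle is entirely bookkeeping rather than conceptual: because the step sizes differ across the mixing window, I must carry partial sums $\sum_\ell \alpha_\ell$ in place of the single product $\alpha\tau(\alpha)$, and repeatedly apply the monotonicity reduction $\alpha_{k,\tau(\alpha_k)} \leq \tau(\alpha_k)\alpha_{k-\tau(\alpha_k)}$ so that the step-size condition~\eqref{thm_sublinear:stepsize} can be applied at each stage. No new analytic idea beyond the constant-step argument of Lemma~\ref{lem_const:xbar_bound} is needed.
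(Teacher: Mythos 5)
Your proposal follows the paper's proof of this lemma essentially verbatim: the same one-step growth estimate from \eqref{sec_analysis:theta_bar} and \eqref{sec_analysis:Akbk_bound}, the same $1+x\le e^x$ bound on the accumulated products over the mixing window (the paper bounds the exponent by $\tau(\alpha_k)\alpha_{k-\tau(\alpha_k)}$ directly, while you pass through the window sum $\alpha_{k,\tau(\alpha_k)}$ first and then invoke monotonicity, which is the same estimate), the same telescoping for part (a), the same triangle-inequality absorption with coefficient at most $2/3$ for part (b), and the same squaring via $(x+y)^2\le 2x^2+2y^2$ for part (c). The approach and all key steps coincide with the paper's argument, so the proposal is correct in the same sense and to the same degree that the paper's own proof is.
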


\begin{proof}
Let $\tau(\alpha_{k})$ be the mixing time associated with $\alpha_k$ defined in \eqref{def_mixing:tau}. First, using Eq.\ \eqref{sec_analysis:theta_bar}  we have
\begin{align*}
\btheta_{k+1} 
&= \btheta_{k} + \alpha_{k}\Abf(X_{k})\btheta_{k} + \alpha_{k}\bbar(X_{k}),
\end{align*}
which when taking the $2$-norm on both sides and using Eq.\ \eqref{sec_analysis:Akbk_bound} yields
\begin{align*}
\|\btheta_{k+1}\| &\leq \|\btheta_{k}\| + \frac{(1+\gamma)\alpha_{k}}{1-\gamma\lambda}\|\btheta_{k}\| + \frac{R\alpha_{k}}{1-\gamma\lambda} = \left(1 + \frac{(1+\gamma)\alpha_{k}}{1-\gamma\lambda}\right)\|\btheta_{k}\| + \frac{R\alpha_{k}}{1-\gamma\lambda}\cdot
\end{align*}
Using the relation $1+x \leq \exp(x)$ for all $x\geq0$ we have from the preceding relation for some $t\in[k-\tau(\alpha_k),\;k-1]$
\begin{align*}
\|\btheta_{t}\| &\leq \prod_{u = k-\tau(\alpha_k)}^{t-1}\left(1 + \frac{(1+\gamma)\alpha_{u}}{1-\gamma\lambda}\right)\|\btheta_{k-\tau(\alpha_k)}\| + \frac{R}{1-\gamma\lambda}\sum_{u=k-\tau(\alpha_k)}^{t-1}\alpha_{u}\prod_{\ell = u+1}^{t-1}\left(1 + \frac{(1+\gamma)\alpha_{\ell}}{1-\gamma\lambda}\right)\notag\\
&\leq \|\btheta_{k-\tau(\alpha_k)}\|\exp\left\{\sum_{u=k-\tau(\alpha_k)}^{t-1} \frac{(1+\gamma)\alpha_{u}}{1-\gamma\lambda}\right\} + \frac{R}{1-\gamma\lambda} \sum_{u=k-\tau(\alpha_k)}^{t-1}\alpha_{u}\exp\left\{\sum_{\ell=u+1}^{t-1}\frac{(1+\gamma)\alpha_{\ell}}{1-\gamma\lambda}\right\}\notag\\
&\leq \|\btheta_{k-\tau(\alpha_k)}\|\exp\left\{\frac{(1+\gamma)\alpha_{k-\tau(\alpha_k)}\tau(\alpha_k)}{1-\gamma\lambda}\right\}\notag\\
&\quad\qquad + \frac{R}{1-\gamma\lambda} \sum_{u=k-\tau(\alpha_k)}^{t-1}\alpha_{u}\exp\left\{\frac{(1+\gamma)\alpha_{k-\tau(\alpha_k)}\tau(\alpha_k)}{1-\gamma\lambda}\right\}.
\end{align*}
Recall from \eqref{thm_sublinear:stepsize} that $\Kcal^*$ is a positive integer such that 
\begin{align*}
\alpha_{k-\tau(\alpha_k)}\tau(\alpha_k) \leq \frac{(1-\gamma\lambda)\log(2)}{1+\gamma},\qquad k\geq \Kcal^*, 
\end{align*}
which by using the equation above we have for all $t\in[k-\tau(\alpha_k),\; k-1]$ with $k\geq \Kcal^*$
\begin{align*}
\|\btheta_{t}\| &\leq 2\|\btheta_{k-\tau(\alpha_k)}\| + \frac{2R}{1-\gamma\lambda} \sum_{u=k-\tau(\alpha_k)}^{t-1}\alpha_{u}.
\end{align*}
Thus, using the preceding relation yields Eq.\ \eqref{lem_tv_xbar_bound:Ineq1}, i.e., for all $k\geq \Kcal^*$
\begin{align*}
&\|\btheta_{k}-\btheta_{k-\tau(\alpha_k)}\|\leq \sum_{t = k-\tau{\alpha_k}}^{k}\|\btheta_{t+1} - \btheta_{t}\| \leq \frac{1+\gamma}{1-\gamma\lambda}\sum_{t = k-\tau{\alpha_k}}^{k}\alpha_{t}\|\btheta_{t}\| + \frac{R}{1-\gamma\lambda}\sum_{t = k-\tau(\alpha_k)}^{k}\alpha_{t}\notag\\
&\leq \frac{1+\gamma}{1-\gamma\lambda}\sum_{t = k-\tau{\alpha_k}}^{k}\alpha_{t}\left(2\|\btheta_{k-\tau(\alpha_k)}\| + \frac{2R}{1-\gamma\lambda} \sum_{u=k-\tau(\alpha_k)}^{t-1}\alpha_{u}\right) + \frac{R}{1-\gamma\lambda}\sum_{t = k-\tau(\alpha_k)}^{k}\alpha_{t}\notag\\
&\leq \frac{2(1+\gamma)}{1-\gamma\lambda}\|\btheta_{k-\tau(\alpha_k)}\|\sum_{t = k-\tau{\alpha_k}}^{k}\alpha_{t} + \frac{R}{1-\gamma\lambda}\left(\frac{2(1+\gamma)\alpha_{k-\tau(\alpha_k)}\tau(\alpha_k)}{1-\gamma\lambda}+1\right)\sum_{t = k-\tau{\alpha_k}}^{k}\alpha_{t}\notag\\
&\leq \frac{2(1+\gamma)}{1-\gamma\lambda}\|\btheta_{k-\tau(\alpha_k)}\|\sum_{t = k-\tau{\alpha_k}}^{k}\alpha_{t} + \frac{2R}{1-\gamma\lambda}\sum_{t = k-\tau{\alpha_k}}^{k}\alpha_{t},
\end{align*}
where the last inequality we use 
\begin{align}
\frac{(1+\gamma)\alpha_{k-\tau(\alpha_k)}\tau(\alpha_k)}{1-\gamma\lambda} \leq \log(2)\leq \frac{1}{3},\qquad k\geq \Kcal^*.\label{lem_tv_xbar_bound:Eq1a} 
\end{align}
Next, by triangle inequality we have
\begin{align*}
\|\btheta_{k}-\btheta_{k-\tau(\alpha_k)}\| &\leq  \frac{2(1+\gamma)}{1-\gamma\lambda}\|\btheta_{k}-\btheta_{k-\tau(\alpha_k)}\|\sum_{t = k-\tau{\alpha_k}}^{k}\alpha_{t} + \frac{2(1+\gamma)}{1-\gamma\lambda}\|\btheta_{k}\|\sum_{t = k-\tau{\alpha_k}}^{k}\alpha_{t} + \frac{2R}{1-\gamma\lambda}\sum_{t = k-\tau{\alpha_k}}^{k}\alpha_{t},   
\end{align*}
which by using Eq.\ \eqref{lem_tv_xbar_bound:Eq1a} gives Eq.\ \eqref{lem_tv_xbar_bound:Ineq2}, i.e., for all $k\geq \Kcal^*$
\begin{align*}
\|\btheta_{k}-\btheta_{k-\tau(\alpha_k)}\| \leq  \frac{6(1+\gamma)}{1-\gamma\lambda}\|\btheta_{k}\|\sum_{t = k-\tau{\alpha_k}}^{k}\alpha_{t} + \frac{6R}{1-\gamma\lambda}\sum_{t = k-\tau{\alpha_k}}^{k}\alpha_{t}.
\end{align*}
Finally, using the inequality $(a+b)^2 \leq 2a^2 + 2b^2$ and Eq.\ \eqref{lem_tv_xbar_bound:Eq1a} we obtain Eq.\ \eqref{lem_tv_xbar_bound:Ineq3}, i.e., for all $k\geq \Kcal^*$
\begin{align*}
\|\btheta_{k}-\btheta_{k-\tau(\alpha_k)}\|^2 &\leq \frac{72(1+\gamma)^2}{(1-\gamma\lambda)^2}\|\btheta_{k}\|^2\left(\sum_{t = k-\tau{\alpha_k}}^{k}\alpha_{t}\right)^2 + \frac{72R^2}{(1-\gamma\lambda)^2}\left(\sum_{t = k-\tau{\alpha_k}}^{k}\alpha_{t}\right)^2 \leq 8\|\btheta_{k}\|^2 + 8R^2.
\end{align*}
\end{proof}

\begin{lemma}\label{lem_mixing_tv:bound}
Suppose that Assumptions \ref{assump:doub_stoch}--\ref{assump:Markov} hold. Let the sequence $\{\theta_{k}^{v}\}$, for all $v\in\Vcal$, be generated by Algorithm \ref{alg:DTD}. Let $\alpha_{k}$ satisfy all the conditions in Lemma \ref{lem_tv:xbar_bound}. Then for all $k\geq \Kcal^*$ we have
\begin{align}
&\left|\Eset\left[(\btheta_{k-\tau(\alpha_k)}-\theta^*)^{T}(\Abf(X_{k})-\Abf)\btheta_{k-\tau(\alpha_k)}\,|\,\Fcal_{k-\tau(\alpha_k)}\right]\right| \leq 27\alpha_{k}\Eset\left[\|\btheta_{k}\|^2\,|\,\Fcal_{k-\tau(\alpha_{k})}\right] + (24R^2 + \|\theta^*\|^2)\alpha_{k}.\label{lem_mixing_tv_bound:Ineq1}\\
&\left|\Eset\left[(\btheta_{k-\tau(\alpha_k)}-\theta^*)^{T}(\bbar(X_{k}) - b)\,|\,\Fcal_{k-\tau(\alpha_k)}\right]\right|
\leq 9\alpha_{k}\Eset[\|\btheta_{k}\|^2\,|\,\Fcal_{k-\tau(\alpha_{k})}] + \left(8R^2 + 1 + \|\theta^*\|\right)\alpha_{k}.\label{lem_mixing_tv_bound:Ineq2}
\end{align}
\end{lemma}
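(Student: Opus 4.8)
The plan is to mirror the constant step-size argument used to prove Lemma~\ref{lem_mixing:bound}, replacing the fixed step $\alpha$ and fixed mixing time $\tau(\alpha)$ by their time-varying counterparts $\alpha_k$ and $\tau(\alpha_k)$ throughout. The observation that makes the constants come out identically is that the uniform bound in Lemma~\ref{lem_tv:xbar_bound}(c), namely $\|\btheta_k - \btheta_{k-\tau(\alpha_k)}\|^2 \le 8\|\btheta_k\|^2 + 8R^2$, coincides exactly with the bound in Lemma~\ref{lem_const:xbar_bound}(c) used in the constant case; all step-size dependence has already been absorbed into this uniform estimate, so no new numerical factors arise.

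For the first inequality~\eqref{lem_mixing_tv_bound:Ineq1} I would first note that both $\btheta_{k-\tau(\alpha_k)}$ and $\theta^*$ are $\Fcal_{k-\tau(\alpha_k)}$-measurable, so they can be pulled outside the conditional expectation, leaving $\bigl|(\btheta_{k-\tau(\alpha_k)}-\theta^*)^T \Eset[\Abf(X_k)-\Abf \mid \Fcal_{k-\tau(\alpha_k)}]\,\btheta_{k-\tau(\alpha_k)}\bigr|$. By the Markov property the inner conditional expectation depends only on $X_{k-\tau(\alpha_k)}$, and since the separation from $k-\tau(\alpha_k)$ to $k$ equals exactly the mixing time $\tau(\alpha_k)$ associated with $\alpha_k$, the definition~\eqref{def_mixing:tau} yields $\|\Eset[\Abf(X_k)-\Abf \mid \Fcal_{k-\tau(\alpha_k)}]\| \le \alpha_k$. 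Applying Cauchy--Schwarz and then $ab \le \tfrac12(a^2+b^2)$, together with $\|\btheta_{k-\tau(\alpha_k)}-\theta^*\|^2 \le 2\|\btheta_{k-\tau(\alpha_k)}\|^2 + 2\|\theta^*\|^2$, produces $\tfrac{3\alpha_k}{2}\Eset[\|\btheta_{k-\tau(\alpha_k)}\|^2 \mid \Fcal_{k-\tau(\alpha_k)}] + \|\theta^*\|^2\alpha_k$. Finally I would replace $\btheta_{k-\tau(\alpha_k)}$ by $\btheta_k$ through $\|\btheta_{k-\tau(\alpha_k)}\|^2 \le 2\|\btheta_k - \btheta_{k-\tau(\alpha_k)}\|^2 + 2\|\btheta_k\|^2$ and invoke Lemma~\ref{lem_tv:xbar_bound}(c); collecting terms gives precisely $27\alpha_k\Eset[\|\btheta_k\|^2 \mid \Fcal_{k-\tau(\alpha_k)}] + (24R^2 + \|\theta^*\|^2)\alpha_k$.

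The second inequality~\eqref{lem_mixing_tv_bound:Ineq2} follows the same template: pull out the measurable factor $(\btheta_{k-\tau(\alpha_k)}-\theta^*)$, use~\eqref{def_mixing:tau} to bound $\|\Eset[\bbar(X_k)-b \mid \Fcal_{k-\tau(\alpha_k)}]\| \le \alpha_k$, apply the triangle inequality together with $2x \le x^2 + 1$ to turn $\|\btheta_{k-\tau(\alpha_k)}-\theta^*\|$ into a quadratic-plus-constant expression, then split off $\btheta_k$ and again apply Lemma~\ref{lem_tv:xbar_bound}(c), yielding $9\alpha_k\Eset[\|\btheta_k\|^2 \mid \Fcal_{k-\tau(\alpha_k)}] + (8R^2 + 1 + \|\theta^*\|)\alpha_k$.

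I do not expect any genuinely hard step: the argument is essentially a transcription of the constant step-size proof. The only point requiring care is the correct invocation of the mixing-time definition for the varying gap, i.e.\ verifying that conditioning on $\Fcal_{k-\tau(\alpha_k)}$ leaves a time separation of exactly $\tau(\alpha_k)$, so that the deviation is controlled by $\alpha_k$ rather than by some other step size. This is guaranteed precisely because $\tau(\alpha_k)$ is, by definition, the mixing time for $\alpha_k$, and because $k \ge \Kcal^*$ ensures the hypotheses of Lemma~\ref{lem_tv:xbar_bound} are in force.
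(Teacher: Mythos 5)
Your proposal is correct and follows essentially the same route as the paper's proof: pull the $\Fcal_{k-\tau(\alpha_k)}$-measurable factors outside, invoke the mixing-time definition \eqref{def_mixing:tau} with gap $\tau(\alpha_k)$ to bound the conditional deviation by $\alpha_k$, apply Cauchy--Schwarz and Young-type inequalities, and finish with the uniform bound $\|\btheta_k-\btheta_{k-\tau(\alpha_k)}\|^2\le 8\|\btheta_k\|^2+8R^2$ from Lemma~\ref{lem_tv:xbar_bound}(c), which is exactly how the stated constants $27$, $24R^2$, $9$, and $8R^2$ arise. No gaps.
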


\begin{proof}
Recall that $\Fcal_{k}$ is the filtration containing all the history up to time $k$. Let $\tau(\alpha_k)$ be the mixing time associated with step size $\alpha_k$ defined in \eqref{def_mixing:tau}. Then we first have $k\geq\tau(\alpha_{k})$
\begin{align*}
&\left|\Eset\left[(\btheta_{k-\tau(\alpha_{k})}-\theta^*)^{T}(\Abf(X_{k})-\Abf)\btheta_{k-\tau(\alpha_{k})}\,|\,\Fcal_{k-\tau(\alpha_{k})}\right]\right|\notag\\ 
&\quad =  \left|(\btheta_{k-\tau(\alpha_{k})}-\theta^*)^{T}\Eset\left[\Abf(X_{k})-\Abf\,|\,\Fcal_{k-\tau(\alpha_{k})}\right]\btheta_{k-\tau(\alpha_{k})}\right|\notag\\ 
&\quad \stackrel{\eqref{def_mixing:tau}}{\leq} \alpha_{k}\Eset\left[\|\btheta_{k-\tau(\alpha_{k})}-\theta^*\|\,\|\btheta_{k-\tau(\alpha_{k})}\|\,|\,\Fcal_{k-\tau(\alpha_{k})}\right] \leq \frac{\alpha_{k}}{2}\Eset\left[\|\btheta_{k-\tau(\alpha_{k})}\|^2 + \|\btheta_{k-\tau(\alpha_{k})}-\theta^*\|^2\,|\,\Fcal_{k-\tau(\alpha_{k})}\right]\notag\\ 
&\quad \leq \frac{3\alpha_{k}}{2}\Eset\left[\|\btheta_{k-\tau(\alpha_{k})}\|^2\,|\,\Fcal_{k-\tau(\alpha_{k})}\right] + \|\theta^*\|^2\alpha_{k}\notag\\
&\quad \leq 3\alpha_{k}\Eset\left[\|\btheta_{k}-\btheta_{k-\tau(\alpha_{k})}\|^2\,|\,\Fcal_{k-\tau(\alpha_{k})}\right] + 3\alpha_{k}\Eset\left[\|\btheta_{k}\|^2\,|\,\Fcal_{k-\tau(\alpha_{k})}\right] +  \|\theta^*\|^2\alpha_{k},
\end{align*}
which by using Eq.\ \eqref{lem_tv_xbar_bound:Ineq3} yields Eq.\   \eqref{lem_mixing_tv_bound:Ineq1}, i.e.,
\begin{align*}
&\left|\Eset\left[(\btheta_{k-\tau(\alpha_{k})}-\theta^*)^{T}(\Abf(X_{k})-\Abf)\btheta_{k-\tau(\alpha_{k})}\,|\,\Fcal_{k-\tau(\alpha_{k})}\right]\right|\notag\\
&\quad \leq 3\alpha_{k}\left(8\|\btheta_{k}\|^2  + 8R^2\right) + 3\alpha_{k}\Eset\left[\|\btheta_{k}\|^2\,|\,\Fcal_{k-\tau(\alpha_{k})}\right] +  \|\theta^*\|^2\alpha_{k}\notag\\
&\quad \leq 27\alpha_{k}\Eset\left[\|\btheta_{k}\|^2\,|\,\Fcal_{k-\tau(\alpha_{k})}\right] + (24R^2 + \|\theta^*\|^2)\alpha_{k}.
\end{align*}
Next, we use the definition of mixing time in Eq.\ \eqref{def_mixing:tau} again to have
\begin{align*}
&\left|\Eset\left[(\btheta_{k-\tau(\alpha_{k})}-\theta^*)^{T}(\bbar(X_{k}) - b)\,|\,\Fcal_{k-\tau(\alpha_{k})}\right]\right| = \left|(\btheta_{k-\tau(\alpha_{k})}-\theta^*)^{T}\Eset\left[\bbar(X_{k}) - b\,|\,\Fcal_{k-\tau(\alpha_{k})}\right]\right|\notag\\
&\qquad \leq \alpha_{k}\Eset[\|\btheta_{k-\tau(\alpha_{k})}-\theta^*\|\,|\,\Fcal_{k-\tau(\alpha_{k})}] \leq \alpha_{k}\Eset[\|\btheta_{k-\tau(\alpha_{k})}\|\,|\,\Fcal_{k-\tau(\alpha_{k})}] + \|\theta^*\|\alpha_{k}\notag\\
&\qquad \leq \frac{\alpha_{k}}{2}\Eset[\|\btheta_{k-\tau(\alpha_{k})}\|^2\,|\,\Fcal_{k-\tau(\alpha_{k})}] + \left(\frac{1}{2} + \|\theta^*\|\right)\alpha_{k}\notag\\
&\qquad \leq \alpha_{k}\Eset[\|\btheta_{k} - \btheta_{k-\tau(\alpha_{k})}\|^2\,|\,\Fcal_{k-\tau(\alpha_{k})}] + \alpha_{k}\Eset[\|\btheta_{k}\|^2\,|\,\Fcal_{k-\tau(\alpha_{k})}] + \left(\frac{1}{2} + \|\theta^*\|\right)\alpha_{k},
\end{align*}
which by using Eq.\ \eqref{lem_tv_xbar_bound:Ineq3} yields Eq.\ \eqref{lem_mixing_tv_bound:Ineq2}, i.e.,
\begin{align*}
&\left|\Eset\left[(\btheta_{k-\tau(\alpha_{k})}-\theta^*)^{T}(\bbar(X_{k}) - b)\,|\,\Fcal_{k-\tau(\alpha_{k})}\right]\right|\notag\\ 
&\qquad \leq \alpha_{k}\left(8\Eset[\|\btheta_{k}\|^2\,|\,\Fcal_{k-\tau(\alpha_{k})}] + 8R^2\right) +  \alpha_{k}\Eset[\|\btheta_{k}\|^2\,|\,\Fcal_{k-\tau(\alpha_{k})}] + \left(\frac{1}{2} + \|\theta^*\|\right)\alpha_{k}\notag\\
&\qquad \leq 9\alpha_{k}\Eset[\|\btheta_{k}\|^2\,|\,\Fcal_{k-\tau(\alpha_{k})}] + \left(8R^2 + 1 + \|\theta^*\|\right)\alpha_{k}.
\end{align*}
\end{proof}

\begin{lemma}\label{lem_tv:opt_bound}
Suppose that Assumptions \ref{assump:doub_stoch}--\ref{assump:Markov} hold. Let the sequence $\{\theta_{k}^{v}\}$, for all $v\in\Vcal$, be generated by Algorithm \ref{alg:DTD}. Let $\alpha_{k}$ satisfy all the conditions in Lemma \ref{lem_tv:xbar_bound}. Then for all $k\geq \Kcal^*$ we have
\begin{align}
&\left |\Eset\left[(\btheta_{k-\tau(\alpha_k)}-\theta^*)^{T}(\Abf(X_{k})-\Abf)(\btheta_{k}-\btheta_{k-\tau(\alpha)})\,|\,\Fcal_{k-\tau(\alpha_k)}\right]\right|\notag\\
&\qquad\qquad \leq \frac{108(1+\gamma)^2\alpha_{k,\tau(\alpha_k)}}{(1-\gamma\lambda)^2}\Eset\left[\|\btheta_{k}\|^2\,|\,\Fcal_{k-\tau(\alpha_k)}\right] +  \frac{100(1+\gamma)^2(R+\|\theta^*\|)^2\alpha_{k,\tau(\alpha_k)}}{(1-\gamma\lambda)^2}\cdot\label{lem_tv_opt_bound:Ineq1}\\
&\left|\Eset\left[(\btheta_{k}-\btheta_{k-\tau(\alpha_k)})^{T}(\Abf(X_{k}) -\Abf)\btheta_{k-\tau(\alpha_k)}\,|\,\Fcal_{k-\tau(\alpha_k)}\right]\right|\notag\\
&\qquad\qquad \leq \frac{36(R+2)(1+\gamma)^2\alpha_{k,\tau(\alpha_k)}}{(1-\gamma\lambda)^2}\Eset\left[\|\btheta_{k}\|^2\,|\,\Fcal_{k-\tau(\alpha_{k})}\right] + \frac{32R(R+1)^2(1+\gamma)^2\alpha_{k,\tau(\alpha_k)}}{(1-\gamma\lambda)^2}\cdot\label{lem_tv_opt_bound:Ineq2}\\
&\left|\Eset\left[(\btheta_{k}-\btheta_{k-\tau(\alpha_{k})})^{T}(\Abf(X_{k}) -\Abf)(\btheta_{k}-\btheta_{k-\tau(\alpha_{k})})\,|\,\Fcal_{k-\tau(\alpha_{k})}\right]\right|\notag\\
&\qquad\qquad \leq \frac{48(1+\gamma)^2\alpha_{k,\tau(\alpha_k)}}{(1-\gamma\lambda)^2} \Eset\left[\|\btheta_{k}\|^2 \,|\,\Fcal_{k-\tau(\alpha_{k})}\right] + \frac{48R^2\alpha_{k,\tau(\alpha_k)}}{(1-\gamma\lambda)^2}\cdot\label{lem_tv_opt_bound:Ineq3}\\
&\left|\Eset\left[(\btheta_{k}-\btheta_{k-\tau(\alpha_{k})})^{T}(\bbar(X_{k}) - b)\,|\,\Fcal_{k-\tau(\alpha_{k})}\right]\right|\notag\\
&\qquad\qquad \leq \frac{6R(1+\gamma)\alpha_{k,\tau(\alpha_k)}}{(1-\gamma\lambda)^2}\Eset[\|\theta_{k}\|^2\,|\,\Fcal_{k-\tau(\alpha_{k})}] + \frac{12R(R+1)\alpha_{k,\tau(\alpha_k)}}{(1-\gamma\lambda)^2}\cdot\label{lem_tv_opt_bound:Ineq4}
\end{align}
\end{lemma}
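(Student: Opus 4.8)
The plan is to mirror the proof of Lemma~\ref{lem_const:opt_bound} essentially line for line, replacing the constant-step-size quantity $\alpha\tau(\alpha)$ everywhere by the telescoped sum $\alpha_{k,\tau(\alpha_k)} = \sum_{t=k-\tau(\alpha_k)}^{k-1}\alpha_t$ from Eq.~\eqref{notation:alpha_t_tau}, and invoking the time-varying drift bounds of Lemma~\ref{lem_tv:xbar_bound} in place of their constant counterparts \eqref{lem_xbar_bound_const:Ineq1}--\eqref{lem_xbar_bound_const:Ineq3}. All four estimates follow the same template: apply Cauchy--Schwarz together with the uniform operator bounds $\|\Abf(X_k)\|,\|\Abf\|\le (1+\gamma)/(1-\gamma\lambda)$ and $\|\bbar(X_k)\|,\|b\|\le R/(1-\gamma\lambda)$ from \eqref{sec_analysis:Akbk_bound}--\eqref{sec_analysis:Ab_bound}, bound the drift factor $\|\btheta_k-\btheta_{k-\tau(\alpha_k)}\|$ using Lemma~\ref{lem_tv:xbar_bound}, and finally convert any residual $\|\btheta_{k-\tau(\alpha_k)}\|^2$ into $\|\btheta_k\|^2$ via the triangle inequality and the squared drift bound \eqref{lem_tv_xbar_bound:Ineq3}.

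Concretely, for \eqref{lem_tv_opt_bound:Ineq1} I would first split the inner product into one term carrying $\|\btheta_{k-\tau(\alpha_k)}\|\,\|\btheta_k-\btheta_{k-\tau(\alpha_k)}\|$ and one carrying $\|\theta^*\|\,\|\btheta_k-\btheta_{k-\tau(\alpha_k)}\|$, substitute \eqref{lem_tv_xbar_bound:Ineq1}, use $2xy\le x^2+y^2$ to split the mixed products, then apply \eqref{lem_tv_xbar_bound:Ineq3}. For \eqref{lem_tv_opt_bound:Ineq2} the same substitution of \eqref{lem_tv_xbar_bound:Ineq1}, followed by $2x\le x^2+1$ and \eqref{lem_tv_xbar_bound:Ineq3}, produces the stated constants. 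Inequality \eqref{lem_tv_opt_bound:Ineq3} is the cleanest: bound $\|\Abf(X_k)-\Abf\|\le 2(1+\gamma)/(1-\gamma\lambda)$ and apply the first bound in \eqref{lem_tv_xbar_bound:Ineq3} directly. For \eqref{lem_tv_opt_bound:Ineq4} I bound $\|\bbar(X_k)-b\|\le 2R/(1-\gamma\lambda)$, substitute \eqref{lem_tv_xbar_bound:Ineq2}, and use $2x\le x^2+1$.

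The one place requiring care---the analog of where the constant-step proof uses $(1+\gamma)\alpha\tau(\alpha)/(1-\gamma\lambda)\le\log 2\le 1/3$---is the reduction of the quadratic factors $\alpha_{k,\tau(\alpha_k)}^2$ produced by \eqref{lem_tv_xbar_bound:Ineq3} to linear factors $\alpha_{k,\tau(\alpha_k)}$. Since $\{\alpha_k\}$ is nonincreasing we have $\alpha_{k,\tau(\alpha_k)}\le \tau(\alpha_k)\,\alpha_{k-\tau(\alpha_k)}$, and the defining condition of $\Kcal^*$ in \eqref{thm_sublinear:stepsize}---already exploited in \eqref{lem_tv_xbar_bound:Eq1a}---guarantees $(1+\gamma)\tau(\alpha_k)\alpha_{k-\tau(\alpha_k)}/(1-\gamma\lambda)\le\log 2\le 1/3$ for all $k\ge\Kcal^*$; hence $(1+\gamma)\alpha_{k,\tau(\alpha_k)}/(1-\gamma\lambda)\le 1/3$ and each $\alpha_{k,\tau(\alpha_k)}^2$ is absorbed into a constant multiple of $\alpha_{k,\tau(\alpha_k)}$. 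I expect no genuine obstacle beyond this monotonicity reduction and the purely mechanical tracking of numerical constants; the structure is otherwise identical to the constant-step-size argument, so the only real bottleneck is careful bookkeeping to reproduce the coefficients $108,100,36,32,48,6,12$ exactly.
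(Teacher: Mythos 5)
Your proposal is correct and follows essentially the same route as the paper's own proof: the paper indeed repeats the constant-step argument of Lemma~\ref{lem_const:opt_bound} verbatim with $\alpha\tau(\alpha)$ replaced by $\alpha_{k,\tau(\alpha_k)}$, invokes \eqref{lem_tv_xbar_bound:Ineq1}--\eqref{lem_tv_xbar_bound:Ineq3} in place of the constant-step drift bounds, and absorbs the quadratic factor via $(1+\gamma)\alpha_{k,\tau(\alpha_k)}/(1-\gamma\lambda)\le\log 2\le 1/3$ exactly as you describe. No gaps.
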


\begin{proof}
First, we show Eq.\ \eqref{lem_tv_opt_bound:Ineq1}. Using Eqs.\ \eqref{sec_analysis:Akbk_bound} and \eqref{sec_analysis:Ab_bound} we have for all $k\geq \tau(\alpha_k)$
\begin{align*}
&\left |\Eset\left[(\btheta_{k-\tau(\alpha_{k})}-\theta^*)^{T}(\Abf(X_{k})-\Abf)(\btheta_{k}-\btheta_{k-\tau(\alpha_{k})})\,|\,\Fcal_{k-\tau(\alpha_{k})}\right]\right|\notag\\
&\quad \leq (\|\Abf(X_{k})\|+\|\Abf\|)\Eset\left[\|\btheta_{k-\tau(\alpha_{k})}-\theta^*\|\|\btheta_{k}-\btheta_{k-\tau(\alpha_{k})}\|\,|\,\Fcal_{k-\tau(\alpha_{k})}\right]\notag\\
&\quad \leq \frac{2(1+\gamma)}{1-\gamma\lambda}\Eset\left[\|\btheta_{k-\tau(\alpha_{k})}\|\|\btheta_{k}-\btheta_{k-\tau(\alpha_{k})}\|\,|\,\Fcal_{k-\tau(\alpha_{k})}\right] + \frac{2\|\theta^*\|(1+\gamma)}{1-\gamma\lambda}\Eset\left[\|\btheta_{k}-\btheta_{k-\tau(\alpha_{k})}\|\,|\,\Fcal_{k-\tau(\alpha_{k})}\right],
\end{align*}
which by using Eq.\ \eqref{lem_tv_xbar_bound:Ineq1} to upper bound the term $\|\btheta_{k}-\btheta_{k-\tau(\alpha_{k})}\|$ yields
\begin{align*}
&\left|\Eset\left[(\btheta_{k-\tau(\alpha_{k})}-\theta^*)^{T}(\Abf(X_{k})-\Abf)(\btheta_{k}-\btheta_{k-\tau(\alpha_{k})})\,|\,\Fcal_{k-\tau(\alpha_{k})}\right]\right|\notag\\
&\quad \leq \frac{4(1+\gamma)^2\alpha_{k,\tau(\alpha_k)}}{(1-\gamma\lambda)^2}\Eset\left[\|\btheta_{k-\tau(\alpha_{k})}\|^2\,|\,\Fcal_{k-\tau(\alpha_{k})}\right]\notag\\ 
&\qquad\quad+ \frac{4(1+\gamma)R\alpha_{k,\tau(\alpha_k)}}{(1-\gamma\lambda)^2}\Eset\left[\|\btheta_{k-\tau(\alpha_{k})}\|\,|\,\Fcal_{k-\tau(\alpha_{k})}\right]\notag\\ 
&\qquad\quad + \frac{2\|\theta^*\|(1+\gamma)}{1-\gamma\lambda}\left(\frac{2(1+\gamma)\alpha_{k,\tau(\alpha_k)}}{1-\gamma\lambda}\Eset\left[\|\btheta_{k-\tau(\alpha_{k})}\|\,|\,\Fcal_{k-\tau(\alpha_{k})}\right] + \frac{2R\alpha_{k,\tau(\alpha_k)}}{1-\gamma\lambda}\right)\notag\\
&\quad= \frac{4(1+\gamma)^2\alpha_{k,\tau(\alpha_k)}}{(1-\gamma\lambda)^2}\Eset\left[\|\btheta_{k-\tau(\alpha_{k})}\|^2\,|\,\Fcal_{k-\tau(\alpha_{k})}\right]\notag\\ 
&\quad \qquad  + \frac{4(1+\gamma)^2(R+\|\theta^*\|)\alpha_{k,\tau(\alpha_k)}}{(1-\gamma\lambda)^2}\Eset\left[\|\btheta_{k-\tau(\alpha_{k})}\|\,|\,\Fcal_{k-\tau(\alpha_{k})}\right] + \frac{4\|\theta^*\|R(1+\gamma)\alpha_{k,\tau(\alpha_k)}}{(1-\gamma\lambda)^2}\cdot
\end{align*}
Using the relation $2xy\leq x^2 + y^2$ to bound the second and third terms on the right-hand side of the preceding relation yields
\begin{align*}
&\left|\Eset\left[(\btheta_{k-\tau(\alpha_{k})}-\theta^*)^{T}(\Abf(X_{k})-\Abf)(\btheta_{k}-\btheta_{k-\tau(\alpha_{k})})\,|\,\Fcal_{k-\tau(\alpha_{k})}\right]\right|\notag\\
&\quad \leq \frac{6(1+\gamma)^2\alpha_{k,\tau(\alpha_k)}}{(1-\gamma\lambda)^2}\Eset\left[\|\btheta_{k-\tau(\alpha_{k})}\|^2\,|\,\Fcal_{k-\tau(\alpha_{k})}\right] +  \frac{2(1+\gamma)^2(R+\|\theta^*\|)^2\alpha_{k,\tau(\alpha_k)}}{(1-\gamma\lambda)^2}\notag\\
&\quad\qquad + \frac{2(1+\gamma)(R+\|\theta^*\|)^2\alpha_{k,\tau(\alpha_k)}}{(1-\gamma\lambda)^2}\notag\\
&\quad \leq \frac{6(1+\gamma)^2\alpha_{k,\tau(\alpha_k)}}{(1-\gamma\lambda)^2}\Eset\left[\|\btheta_{k-\tau(\alpha_{k})}\|^2\,|\,\Fcal_{k-\tau(\alpha_{k})}\right] +  \frac{4(1+\gamma)^2(R+\|\theta^*\|)^2\alpha_{k,\tau(\alpha_k)}}{(1-\gamma\lambda)^2}\notag\\
&\quad \leq \frac{12(1+\gamma)^2\alpha_{k,\tau(\alpha_k)}}{(1-\gamma\lambda)^2}\left(\Eset\left[\|\btheta_{k}-\btheta_{k-\tau(\alpha_{k})}\|^2\,|\,\Fcal_{k-\tau(\alpha_{k})}\right] + \Eset\left[\|\btheta_{k}\|^2\,|\,\Fcal_{k-\tau(\alpha_{k})}\right]\right)\notag\\ 
&\quad\qquad +  \frac{4(1+\gamma)^2(R+\|\theta^*\|)^2\alpha_{k,\tau(\alpha_k)}}{(1-\gamma\lambda)^2} ,
\end{align*}
which by using Eq.\ \eqref{lem_tv_xbar_bound:Ineq3} to the first term on the right-hand side we obtain Eq.\ \eqref{lem_tv_opt_bound:Ineq1}, i.e.,
\begin{align*}
&\left |\Eset\left[(\btheta_{k-\tau(\alpha_{k})}-\theta^*)^{T}(\Abf(X_{k})-\Abf)(\btheta_{k}-\btheta_{k-\tau(\alpha_{k})})\,|\,\Fcal_{k-\tau(\alpha_{k})}\right]\right|\notag\\
&\quad \leq \frac{12(1+\gamma)^2\alpha_{k,\tau(\alpha_k)}}{(1-\gamma\lambda)^2}\left(8\Eset\left[\|\btheta_{k}\|^2\,|\,\Fcal_{k-\tau(\alpha_{k})}\right] + 8R^2 \right)\notag\\
&\quad\qquad + \frac{12(1+\gamma)^2\alpha_{k,\tau(\alpha_k)}}{(1-\gamma\lambda)^2}\Eset\left[\|\btheta_{k}\|^2\,|\,\Fcal_{k-\tau(\alpha_{k})}\right] +  \frac{4(1+\gamma)^2(R+\|\theta^*\|)^2\alpha_{k,\tau(\alpha_k)}}{(1-\gamma\lambda)^2} \notag\\
&\quad \leq \frac{108(1+\gamma)^2\alpha_{k,\tau(\alpha_k)}}{(1-\gamma\lambda)^2}\Eset\left[\|\btheta_{k}\|^2\,|\,\Fcal_{k-\tau(\alpha_{k})}\right] +  \frac{100(1+\gamma)^2(R+\|\theta^*\|)^2\alpha_{k,\tau(\alpha_k)}}{(1-\gamma\lambda)^2}\cdot
\end{align*}
Next, using Eqs.\ \eqref{sec_analysis:Akbk_bound} and  \eqref{sec_analysis:Ab_bound} again we have
\begin{align*}
&\left|\Eset\left[(\btheta_{k}-\btheta_{k-\tau(\alpha_{k})})^{T}(\Abf(X_{k}) -\Abf)\btheta_{k-\tau(\alpha_{k})}\,|\,\Fcal_{k-\tau(\alpha_{k})}\right]\right|\notag\\
&\quad \leq  \frac{2(1+\gamma)}{1-\gamma\lambda}\Eset\left[\|\btheta_{k-\tau(\alpha_{k})}\|\|\btheta_{k}-\btheta_{k-\tau(\alpha_{k})}\|\,|\,\Fcal_{k-\tau(\alpha_{k})}\right]
\end{align*}
which by using Eq.\ \eqref{lem_tv_xbar_bound:Ineq1} and $2x\leq x^2 + 1$ yields
\begin{align*}
&\left|\Eset\left[(\btheta_{k}-\btheta_{k-\tau(\alpha_{k})})^{T}(\Abf(X_{k}) -\Abf)\btheta_{k-\tau(\alpha_{k})}\,|\,\Fcal_{k-\tau(\alpha_{k})}\right]\right|\notag\\
&\quad\leq \frac{2(1+\gamma)}{1-\gamma\lambda}\Eset\left[\frac{2(1+\gamma)\alpha_{k,\tau(\alpha_k)}}{1-\gamma\lambda}\|\btheta_{k-\tau(\alpha_{k})}\|^2  + \frac{2R\alpha_{k,\tau(\alpha_k)}}{1-\gamma\lambda}\|\btheta_{k-\tau(\alpha_{k})}\|\,\Bigg|\,\Fcal_{k-\tau(\alpha_{k})}\right]\\
&\quad\leq \frac{2(1+\gamma)}{1-\gamma\lambda}\Eset\left[\frac{(R+2)(1+\gamma)\alpha_{k,\tau(\alpha_k)}}{1-\gamma\lambda}\|\btheta_{k-\tau(\alpha_{k})}\|^2  + \frac{R\alpha_{k,\tau(\alpha_k)}}{1-\gamma\lambda}\,\Bigg|\,\Fcal_{k-\tau(\alpha_{k})}\right]\\
&\quad\leq  \frac{4(R+2)(1+\gamma)^2\alpha_{k,\tau(\alpha_k)}}{(1-\gamma\lambda)^2}\Eset\left[\|\btheta_{k}-\btheta_{k-\tau(\alpha_{k})}\|^2  + \|\btheta_{k}\|^2\,\big|\,\Fcal_{k-\tau(\alpha_{k})}\right] + \frac{2R(1+\gamma)\alpha_{k,\tau(\alpha_k)}}{(1-\gamma\lambda)^2}.
\end{align*}
Using Eq.\ \eqref{lem_tv_xbar_bound:Ineq3} to bound the preceding relation gives Eq.\ \eqref{lem_tv_opt_bound:Ineq2}, i.e.,
\begin{align*}
&\left|\Eset\left[(\btheta_{k}-\btheta_{k-\tau(\alpha_{k})})^{T}(\Abf(X_{k}) -\Abf)\btheta_{k-\tau(\alpha_{k})}\,|\,\Fcal_{k-\tau(\alpha_{k})}\right]\right|\notag\\
&\quad\leq \frac{4(R+2)(1+\gamma)^2\alpha_{k,\tau(\alpha_k)}}{(1-\gamma\lambda)^2}\Eset\left[9\|\btheta_{k}\|^2 + 8R^2\,|\,\Fcal_{k-\tau(\alpha_{k})}\right] + \frac{2R(1+\gamma)\alpha_{k,\tau(\alpha_k)}}{(1-\gamma\lambda)^2}\\
&\quad \leq \frac{36(R+2)(1+\gamma)^2\alpha_{k,\tau(\alpha_k)}}{(1-\gamma\lambda)^2}\Eset\left[\|\btheta_{k}\|^2\,|\,\Fcal_{k-\tau(\alpha_{k})}\right] + \frac{32R(R+1)^2(1+\gamma)^2\alpha_{k,\tau(\alpha_k)}}{(1-\gamma\lambda)^2}\cdot
\end{align*}
Third, using the first inequality in  \eqref{lem_tv_xbar_bound:Ineq3} yields Eq.\ \eqref{lem_tv_opt_bound:Ineq3}, i.e.,
\begin{align*}
&\left|\Eset\left[(\btheta_{k}-\btheta_{k-\tau(\alpha_{k})})^{T}(\Abf(X_{k}) -\Abf)(\btheta_{k}-\btheta_{k-\tau(\alpha_{k})})\,|\,\Fcal_{k-\tau(\alpha_{k})}\right]\right|\notag\\
&\quad \leq \Eset\left[\|\Abf(X_{k})-\Abf\|\|\btheta_{k}-\btheta_{k-\tau(\alpha_{k})}\|^2\,|\,\Fcal_{k-\tau(\alpha_{k})}\right]\notag\\
&\quad \leq \frac{2(1+\gamma)}{1-\gamma\lambda}\left(\frac{72(1+\gamma)^2\alpha_{k}^2\tau^2(\alpha_{k})}{(1-\gamma\lambda)^2}\Eset\left[\|\btheta_{k}\|^2\,|\,\Fcal_{k-\tau(\alpha_{k})}\right]  + \frac{72R^2\alpha_{k}^2\tau^2(\alpha_{k})}{(1-\gamma\lambda)^2}\right)\notag\\
&\quad \leq \frac{48(1+\gamma)^2\alpha_{k,\tau(\alpha_k)}}{(1-\gamma\lambda)^2} \Eset\left[\|\btheta_{k}\|^2 \,|\,\Fcal_{k-\tau(\alpha_{k})}\right] + \frac{48R^2\alpha_{k,\tau(\alpha_k)}}{(1-\gamma\lambda)^2},
\end{align*}
where in the last inequality we use Eq.\ \eqref{thm_sublinear:stepsize} to have $
(1+\gamma)\alpha_{k,\tau(\alpha_k)}/(1-\gamma\lambda)\leq \log(2)\leq \frac{1}{3},\; \forall k \geq\Kcal^*.$ Finally, using Eqs. \eqref{sec_analysis:Akbk_bound} and \eqref{sec_analysis:Ab_bound} we get Eq.\ \eqref{lem_tv_opt_bound:Ineq4}, i.e.,
\begin{align*}
&\left|\Eset\left[(\btheta_{k}-\btheta_{k-\tau(\alpha_{k})})^{T}(\bbar(X_{k}) - b)\,|\,\Fcal_{k-\tau(\alpha_{k})}\right]\right| \leq \frac{2R}{1-\gamma\lambda} E[\|\btheta_{k}-\btheta_{k-\tau(\alpha_{k})}\|\,|\,\Fcal_{k-\tau(\alpha_{k})}]\notag\\
& \stackrel{\eqref{lem_tv_xbar_bound:Ineq2}}{\leq} \frac{2R}{1-\gamma\lambda}\left(\frac{6(1+\gamma)\alpha_{k,\tau(\alpha_k)}}{1-\gamma\lambda}\|\btheta_{k}\|  + \frac{6R\alpha_{k,\tau(\alpha_k)}}{1-\gamma\lambda}\right)\notag\\
&\leq \frac{12R(1+\gamma)\alpha_{k,\tau(\alpha_k)}}{(1-\gamma\lambda)^2}\Eset[\|\theta_{k}\|\,|\,\Fcal_{k-\tau(\alpha_{k})}] + \frac{12R^2\alpha_{k,\tau(\alpha_k)}}{(1-\gamma\lambda)^2}\notag\\
& \leq \frac{6R(1+\gamma)\alpha_{k,\tau(\alpha_k)}}{(1-\gamma\lambda)^2}\Eset[\|\theta_{k}\|^2\,|\,\Fcal_{k-\tau(\alpha_{k})}] + \frac{12R(R+1)\alpha_{k,\tau(\alpha_k)}}{(1-\gamma\lambda)^2},
\end{align*}
where in the last inequality we use $2x\leq x^2 + 1$.
\end{proof}

Finally, using Lemmas \ref{lem_tv:xbar_bound}--\ref{lem_tv:opt_bound} we now show Lemma \ref{lem_tv:bias}. 

\begin{proof}[Proof of Lemma \ref{lem_tv:bias}]
First we consider 
\begin{align*}
&\Eset\left[(\btheta_{k}-\theta^*)^{T}(\Abf(X_{k})\btheta_{k} -\Abf\btheta_{k})\,|\,\Fcal_{k-\tau(\alpha_{k})}\right]\\
&\quad = \Eset\left[(\btheta_{k-\tau(\alpha_{k})}-\theta^*)^{T}(\Abf(X_{k})-\Abf)\btheta_{k-\tau(\alpha_{k})}\,|\,\Fcal_{k-\tau(\alpha_{k})}\right]\notag\\ 
&\quad\qquad + \Eset\left[(\btheta_{k-\tau(\alpha_{k})}-\theta^*)^{T}(\Abf(X_{k})-\Abf)(\btheta_{k}-\btheta_{k-\tau(\alpha_{k})})\,|\,\Fcal_{k-\tau(\alpha_{k})}\right]\notag\\ 
&\quad \qquad + \Eset\left[(\btheta_{k}-\btheta_{k-\tau(\alpha_{k})})^{T}(\Abf(X_{k}) -\Abf)\btheta_{k-\tau(\alpha_{k})}\,|\,\Fcal_{k-\tau(\alpha_{k})}\right]\notag\\ 
&\quad \qquad + \Eset\left[(\btheta_{k}-\btheta_{k-\tau(\alpha_{k})})^{T}(\Abf(X_{k}) -\Abf)(\btheta_{k}-\btheta_{k-\tau(\alpha_{k})})\,|\,\Fcal_{k-\tau(\alpha_{k})}\right], 
\end{align*}
which by using Eqs. \eqref{lem_mixing_tv_bound:Ineq1} and \eqref{lem_tv_opt_bound:Ineq1}--\eqref{lem_tv_opt_bound:Ineq3} we have
\begin{align}
&\Big|\Eset\left[(\btheta_{k}-\theta^*)^{T}(\Abf(X_{k})\btheta_{k} -\Abf\btheta_{k})\,|\,\Fcal_{k-\tau(\alpha_{k})}\right]\Big|\notag\\
& \leq 27\alpha_{k}\Eset\left[\|\btheta_{k}\|^2\,|\,\Fcal_{k-\tau(\alpha_{k})}\right] + (24R^2 + \|\theta^*\|^2)\alpha_{k}\notag\\
&\quad\qquad + \frac{108(1+\gamma)^2\alpha_{k,\tau(\alpha_k)}}{(1-\gamma\lambda)^2}\Eset\left[\|\btheta_{k}\|^2\,|\,\Fcal_{k-\tau(\alpha_{k})}\right] +  \frac{100(1+\gamma)^2(R+\|\theta^*\|)^2\alpha_{k,\tau(\alpha_k)}}{(1-\gamma\lambda)^2}\notag\\
&\quad\qquad + \frac{36(R+2)(1+\gamma)^2\alpha_{k,\tau(\alpha_k)}}{(1-\gamma\lambda)^2}\Eset\left[\|\btheta_{k}\|^2\,|\,\Fcal_{k-\tau(\alpha_{k})}\right] + \frac{32R(R+1)^2(1+\gamma)^2\alpha_{k,\tau(\alpha_k)}}{(1-\gamma\lambda)^2}\notag\\
&\qquad\quad + \frac{48(1+\gamma)^2\alpha_{k,\tau(\alpha_k)}}{(1-\gamma\lambda)^2} \Eset\left[\|\btheta_{k}\|^2 \,|\,\Fcal_{k-\tau(\alpha_{k})}\right] + \frac{48R^2\alpha_{k,\tau(\alpha_k)}}{(1-\gamma\lambda)^2}\notag\\
& \leq \left(27 + \frac{(228 + 36R)(1+\gamma)^2}{(1-\gamma\lambda)^2}\right)\tau(\alpha_{k})\alpha_{k-\tau(\alpha_k)} \Eset\left[\|\btheta_{k}\|^2 \,|\,\Fcal_{k-\tau(\alpha_{k})}\right] + (24R^2+\|\theta^*\|^2 )\tau(\alpha_{k})\alpha_{k-\tau(\alpha_k)}\notag\\
&\quad\qquad + \frac{\Big(48R^2+32R(R+1)^2+100(R+\|\theta^*\|)^2\Big)(1+\gamma)^2}{(1-\gamma\lambda)^2}\tau(\alpha_{k})\alpha_{k-\tau(\alpha_k)},\label{lem_tv_bias:Eq1a}
\end{align}
where in the last inequality we use $\alpha_{k}\leq \alpha_{k-\tau(\alpha_k)}$ and $\alpha_{k,\tau(\alpha_k)}\leq \tau(\alpha_k)\alpha_{k-\tau(\alpha_k)}$. Similarly, we consider 
\begin{align*}
(\btheta_{k}-\theta^*)^{T}(\bbar(X_{k}) - b) = (\btheta_{k}-\btheta_{k-\tau(\alpha_{k})})^{T}(\bbar(X_{k}) - b) + (\btheta_{k-\tau(\alpha_{k})}-\theta^*)^{T}(\bbar(X_{k}) - b),    
\end{align*}
which by using Eqs.\ \eqref{lem_mixing_tv_bound:Ineq2} and \eqref{lem_tv_opt_bound:Ineq4} yields
\begin{align}
&\Big|\Eset\left[(\btheta_{k}-\theta^*)^{T}(\bbar(X_{k}) - b)\,|\,\Fcal_{k-\tau(\alpha_{k})}\right]\Big|\notag\\
&\quad \leq 9\alpha_{k}\Eset[\|\btheta_{k}\|^2\,|\,\Fcal_{k-\tau(\alpha_{k})}] + \left(8R^2 + 1 + \|\theta^*\|\right)\alpha_{k}\notag\\ 
&\quad\qquad + \frac{6R(1+\gamma)}{(1-\gamma\lambda)^2}\alpha_{k,\tau(\alpha_{k})}\Eset[\|\theta_{k}\|^2\,|\,\Fcal_{k-\tau(\alpha_{k})}] + \frac{12R(R+1)}{(1-\gamma\lambda)^2}\alpha_{k,\tau(\alpha_{k})}\notag\\
&\quad = \left(9 + \frac{6R(1+\gamma)}{(1-\gamma\lambda)^2}\right)\tau(\alpha_k)\alpha_{k-\tau(\alpha_{k})}\Eset[\|\theta_{k}\|^2\,|\,\Fcal_{k-\tau(\alpha_{k})}]\notag\\
&\quad \qquad + \left(\left(8R^2 + 1 + \|\theta^*\|\right) + \frac{12R(R+1)}{(1-\gamma\lambda)^2} \right)\tau(\alpha_k)\alpha_{k-\tau(\alpha_{k})}.\label{lem_tv_bias:Eq1b}
\end{align}
Using Eqs.\ \eqref{lem_tv_bias:Eq1a} and \eqref{lem_tv_bias:Eq1b} we obtain Eq.\ \eqref{lem_tv_bias:Ineq}, i.e.,
\begin{align*}
&\left|\Eset\left[(\btheta_{k}-\theta^*)^{T}(\Abf(X_{k})\btheta_{k} -\Abf\btheta_{k} + \bbar(X_{k}) - b)\,|\,\Fcal_{k-\tau(\alpha_{k})}\right]\right|\notag\\
&\quad \leq \left(27 + \frac{(228 + 36R)(1+\gamma)^2}{(1-\gamma\lambda)^2}\right)\tau(\alpha_k)\alpha_{k-\tau(\alpha_{k})} \Eset\left[\|\btheta_{k}\|^2 \,|\,\Fcal_{k-\tau(\alpha_{k})}\right]\notag\\
&\quad\qquad + \left(24R^2+\|\theta^*\|^2\right)\tau(\alpha_k)\alpha_{k-\tau(\alpha_{k})}\notag\\
&\quad\qquad + \left( \frac{\Big(48R^2+32R(R+1)^2+100(R+\|\theta^*\|)^2\Big)(1+\gamma)^2}{(1-\gamma\lambda)^2}\right)\tau(\alpha_k)\alpha_{k-\tau(\alpha_{k})}\notag\\
&\quad\qquad + \left(9 + \frac{6R(1+\gamma)}{(1-\gamma\lambda)^2}\right)\tau(\alpha_k)\alpha_{k-\tau(\alpha_{k})}\Eset[\|\theta_{k}\|^2\,|\,\Fcal_{k-\tau(\alpha_{k})}]\notag\\ 
&\quad\qquad + \left(\left(8R^2 + 1 + \|\theta^*\|\right) + \frac{12R(R+1)}{(1-\gamma\lambda)^2} \right)\tau(\alpha_k)\alpha_{k-\tau(\alpha_{k})}\notag\\
&\quad \leq \left(36 + \frac{(228 + 42R)(1+\gamma)^2}{(1-\gamma\lambda)^2}\right)\tau(\alpha_k)\alpha_{k-\tau(\alpha_{k})} \Eset\left[\|\btheta_{k}\|^2 \,|\,\Fcal_{k-\tau(\alpha_{k})}\right]\notag\\
&\quad\qquad + \left(32R^2+ 2\|\theta^*\|^2 + 1\right)\tau(\alpha_k)\alpha_{k-\tau(\alpha_{k})}\notag\\
&\quad\qquad + \left( \frac{\Big(48R^2+32(R+1)^3+100(R+\|\theta^*\|)^2\Big)(1+\gamma)^2}{(1-\gamma\lambda)^2}\right)\tau(\alpha_k)\alpha_{k-\tau(\alpha_{k})}.
\end{align*}
\end{proof}

\end{document}